\documentclass[11pt]{amsart}
\allowdisplaybreaks
\usepackage{amsrefs} % Required for references
\usepackage{fancyhdr}
\usepackage[titletoc]{appendix}

\usepackage{amsmath,amssymb,amsthm}
\usepackage{bm}% bold math
\usepackage[dvipsnames]{xcolor}
\usepackage{etoolbox,adjustbox}
\usepackage{abstract}

\usepackage[overload]{textcase}
\usepackage{multirow}
\usepackage{graphics,graphicx}% Include figure files
\usepackage{float,verbatim,array}
\usepackage{todonotes}
\usepackage{xfrac,enumitem,soul}
\usepackage{tikz,tikz-cd}
\usepackage[breaklinks=true]{hyperref}
\usepackage[top=1in, bottom=1in, left=1in, right=1in]{geometry}
\usepackage{color,xcolor}
\colorlet{BLUE}{blue}
\colorlet{RED}{red}
\colorlet{GRAY}{gray}
\colorlet{BROWN}{brown}
\definecolor{OliveGreen}{rgb}{0,0.6,0}

\counterwithout{equation}{section} % undo numbering system provided by phstyle.cls
\numberwithin{equation}{section}
\newtheorem{thm}{Theorem}[section]
\newtheorem{lmm}[thm]{Lemma}%[section]
\newtheorem{crl}[thm]{Corollary}%[section]
\newtheorem{prp}[thm]{Proposition}%[section]

\theoremstyle{definition}
\newtheorem{dfn}[thm]{Definition}%[section]
\newtheorem{rmk}[thm]{Remark}%[section]
\newtheorem{question}[thm]{Question}%[section]
\theoremstyle{definition}
\newtheorem*{thm*}{Theorem}
\newtheorem*{lmm*}{Lemma}
\newtheorem*{crl*}{Corollary}
\newtheorem*{MRI*}{Result 1}
\newtheorem*{MRII*}{Result 2}
\newtheorem*{PRT*}{Path representation theorem}
\newtheorem*{claim*}{Claim}

\title[Completeness of topological spaces]{Completeness of topological spaces: An induction-free review}
\author{Earnest Akofor}
\address{\textnormal{Department of Mathematics and Computer Science, %\\~\\
         Faculty of Science, University of Bamenda, %\\~\\
         PO Box 39 Bambili, NW Region, Cameroon}}
\email{eakofor@gmail.com}
\subjclass[2020]{Primary 54A20 54E15; Secondary 54D35, 54E52, 54C35}
\keywords{Base space, net-approach, convergence, cauchy net, completion, continuity}

\begin{document}

\begingroup
\def\uppercasenonmath#1{} % this disables uppercasing title
\let\MakeUppercase\relax % this disables uppercasing authors
\maketitle
%\tableofcontents
%%%%%%%%%%%%%%%%%%%%%%%%%%%%%%%%%%%%%%%%%%%%%%%%%%%%%%%%%%%%%%%%%%%%%%%%%%%

\begin{abstract}    % type your abstract below
\vspace{0.2cm}

\noindent Completeness for a (topological) space is often based on the existence of special structures (such as metrics, uniformities, proximities, convergences, etc) that explicitly induce the topology, making the completeness induction-dependent. However, in any given space $X=(X,\tau)$, suppose we fix a base $\mathcal{B}$ of $\tau$ that is \emph{graded}, in the sense it is partitioned as $\mathcal{B}=\bigcup_{\varepsilon\in \mathcal{E}}\mathcal{B}_\varepsilon$ into open covers $\mathcal{B}_\varepsilon$ of $X$, making $X=(X,\tau,\mathcal{B})$ a \emph{(graded) base space}. If we now relax the notion of \emph{convergence of nets} to a notion of \emph{approach between nets} in $X$, then we obtain a more natural \emph{induction-free} notion of a \emph{cauchy net} in a base space, hence a corresponding \emph{induction-free} notion of \emph{completeness} for base spaces. We find that many classical concepts and results on completeness for uniform spaces carry over to completeness for a certain class of base spaces (named \emph{locally symmetric base spaces} or \emph{$lsb$-spaces}) that properly contains uniform spaces. The said classical results include characterization of compactness, Baire's theorem, existence of a completion, and completeness results for product and function $lsb$-spaces.
\end{abstract}
%%%%%%%%%%%%%%%%%%%%%%%%%%%%%%%%%%%%%%%%%%%%%%%%%%%%%%%%%%%
%%%%%% Unbold TOC text and page numbers %%%%%%%%%%%%%%%%%%%
\let\bforigdefault\bfdefault
\addtocontents{toc}{\let\string\bfdefault\string\mddefault}
%%%%%%%%%%%%%%%%%%%%%%%%%%%%%%%%%%%%%%%%%%%%%%%%%%%%%%%%%%%
\tableofcontents
%\maketitle
%%%%%%%%%%%%%%%%%%%%%%%%%%%%%%%%%%%%%%%%%%%%%%%%%%%%%%%%%%%
%%%%%%%%%%%%%%%%%%%%   Start of main body of article
%%########################################################################################################################
%%########################################################################################################################
\section{\textnormal{\bf Introduction}}\label{Intro}
%\begin{comment}
%%########################################################################################################################
%%########################################################################################################################
In our discussion, the words ``function'' and ``map'' are synonymous.
\begin{center}
\emph{Motivation and related work}
\end{center}

In a metric space $X=(X,d)$ (resp., topological group $X=(X,-)$), the metric $d$ (resp., the algebraic difference $-:(g,h)\mapsto gh^{-1}$) allows us to define the notion of a cauchy net, which is then used to define completeness for the space. More generally, in a uniform space $(X,\mathcal{U})$, the uniform structure $\mathcal{U}$ provides a system of ball-like neighborhoods similar to that provided by the metric in a metric space, leading to an analogous notion of a cauchy net, which is then used to define completeness (see Definition \ref{uCmpltnDfn}). Other spaces with similar structures (to the metric, algebraic difference, uniform structure) and their generalizations include proximity spaces \cite{NmpWrrk1970}, cauchy spaces \cite{Schect1997,Lowen1989}, convergence spaces \cite{BentEtal1990,Dolecki2024,DolMyn2016,Choquet47-48}, connector spaces \cite{Nakano1977}. In all these situations, the structures used to define cauchyness are the same structures that induce the underlying topology of the space, making the notion of cauchyness induction-dependent. ({\bf footnote}\footnote{
There of course exist many other generalizations of completeness for topological spaces, but our study is limited to those directly based on the original notion of a cauchy sequence.
}).

In a generic (topological) space $X=(X,\tau)$ however, we would like to have an induction-free notion of a cauchy net, which would then give an induction-free notion of completeness. Providing such a notion, and some immediate consequences, is our main goal in this paper. A review of preliminary concepts and results related to our discussion can be found in many texts on general topology, including \cite[Ch.s 7,8,12]{James1987}, \cite[Ch. 6]{kelley1955}, \cite[Ch. 8]{engelking1989}, \cite[Ch.s II,IX,X]{BbkGT1966}, \cite[Ch. 3]{NmpWrrk1970}, \cite{Isbell1964}, \cite{neumann1935}, and the references therein.

Given a space $X=(X,\tau)$, suppose we fix a base $\mathcal{B}$ of $\tau$ that is \textbf{graded}, in the sense that it is partitioned as $\mathcal{B}=\bigcup_{\varepsilon\in \mathcal{E}}\mathcal{B}_\varepsilon$ into open covers $\mathcal{B}_\varepsilon$ of $X$, making $X=(X,\tau,\mathcal{B})$ a \textbf{(graded) base space}. If we now relax the usual notion of \emph{convergence of nets} to a notion of \emph{approach between nets} (Definition \ref{ApprchDfn}), then we obtain a more natural induction-free notion of a \textbf{cauchy net} in a base space, hence a corresponding induction-free notion of \textbf{completeness} (Definition \ref{ComplDfn}). ({\bf footnote}\footnote{
One could argue that fixing a graded base is similar to using structures that induce the topology. This is partly true since the usual topology inducing structures such as metrics and uniform structures can be achieved using graded bases (see the notion of $u$-spaces in Definition \ref{uCmpltnDfn}). However, given a space $(X,\tau)$ and a base $\mathcal{B}\subset\tau$, picking a grading for $\mathcal{B}$ is in general independent of (hence should not affect) the nature of $\tau$, and the associated definition of approach between nets (Definitions \ref{NtApSumrDfn}, \ref{NtApSumrDfn2}, \ref{ApprchDfn}) is relaxed/flexible enough to admit a large class of other equally valid induction independent notions of completeness. Therefore, just as we have a notion of ``convergence'' that requires no prior restriction on the topology of the space, we also have a notion of ``cauchyness'' that requires no prior restriction on the topology of the space.
}). We also consider an induction-free notion of a \textbf{topology of uniform convergence} $\tau_{uc}$ (Definition \ref{TpUnfCnvDfn}) on function spaces $X^Y$ (for sets $Y$). Our discussion, which is mainly a review of well known classical results (Theorems \ref{CompCsbThm}, \ref{CpctSpThm}, \ref{BaireThm}, \ref{tUConThm}, \ref{BSpCmplThm}, \ref{EBPCSthm}, \ref{PtWsCompThm}, \ref{UnfCmplThm}, \ref{ContUlmThm}), closely follows the review of classical results on metric space completeness in \cite[Sec. 25.4.2]{akofor2021}, but as already mentioned, such classical results can be found in many texts on general topology, including especially \cite[Ch.s 7,8,12]{James1987}.

Our general idea is to adopt a straightforward point-set topology oriented viewpoint that moves away from the traditional symmetry assuming ball-like neighborhoods inherent in earlier studies on completeness. In particular, our induction-free notion of cauchyness (Definition \ref{ApprchDfn}) is based on the observation that even in the induction-dependent pioneer studies of completeness, the definition of a cauchy net nonetheless effectively depends only on a choice of \emph{graded base} for the topology, and is independent of how the topology, base, or grading is explicitly described (whether in terms of metrics, algebraic differences, uniformities, proximities, cauchy structures, convergences, connectors, or otherwise) -- see the preliminary discussion in Section \ref{Prelims}. Therefore, the purpose of our view of completeness is to consolidate (i.e., unify and extend) many familiar treatments of completeness while revealing potentially redundant structures and procedures in earlier studies of completeness. Most of our results involve three main classes of base spaces, named in Definition \ref{SymBsSp} as \textbf{$lsb$-spaces} (which strictly contain uniform spaces), \textbf{$csb$-spaces} (which contain uniform spaces), and \textbf{$sb$-spaces}.

Historical reviews of completeness and completion may be found in \cite{BentEtal1990,Dolecki2024,DolMyn2016}.

\begin{center}
\emph{Summary of main results}
\end{center}
In Section \ref{Prelims}, we present a brief unified overview of some familiar treatments of completeness for topological spaces and discuss a framework for extending such treatments based on a notion of approach between nets (Definitions \ref{NtApSumrDfn} and \ref{NtApSumrDfn2}). Next, in Section \ref{ApprSec}, we introduce and analyze a concrete version of the notion of \emph{approach between nets} (Definition \ref{ApprchDfn}). We obtain the following:
\begin{enumerate}
\item {\bf Theorem \ref{CompCsbThm}}: A continuous map between $lsb$-spaces is uniform on compact subsets.
\item {\bf Theorem \ref{CpctSpThm}}:
A $lsb$-space is compact if and only if it is both complete and precompact.
\item {\bf Theorem \ref{BaireThm}}: Baire's theorem holds in a locally complete regular Hausdorff $lsb$-space.
\end{enumerate}

In Section \ref{CmplBasSpSec}, we investigate more implications of \emph{approach between nets} and find that some classical concepts and results on completion for uniform spaces carry over to completion for \emph{$lsb$-spaces}. We obtain the following results:
\begin{enumerate}[resume]
\item {\bf Theorem \ref{tUConThm}}: A uniform map from a dense subset of a Hausdorff $lsb$-space to a complete Hausdorff $lsb$-space has a unique uniform extension.
\item {\bf Theorem \ref{BSpCmplThm}}: Every $lsb$-space admits a completion (which is unique if the space is Hausdorff).
\item {\bf Theorem \ref{EBPCSthm}}: Every $csb$-space is a cauchy space.
\end{enumerate}
In Section \ref{CmPrdFxnSpSec}, we consider completeness for product and function base spaces, along with an induction-free notion of a \emph{topology of uniform convergence} $\tau_{uc}$ on function spaces $X^Y$, and recover the following classical results:
\begin{enumerate}[resume]
\item {\bf Theorem \ref{PtWsCompThm}}: The product $(\prod_{y\in Y}X_y,\tau_p)$ of $lsb$-spaces is complete if and only if each factor $X_y$ is complete.
\item {\bf Theorems \ref{UnfCmplThm}}: If $X$ is a complete $lsb$-space, then so is $(X^Y,\tau_{uc},\mathcal{B})$.
\item {\bf Theorem \ref{ContUlmThm}}: If $X$ is a complete $lsb$-space, then both the space of continuous maps $C(Y,X)\subset (X^Y,\tau_{uc},\mathcal{B})$ and the space of uniformly continuous maps $UC(Y,X)\subset (X^Y,\tau_{uc},\mathcal{B})$ are complete.
\end{enumerate}
 The last two results above remain true if $\tau_{uc}$ is replaced with any topology containing the topology of pointwise convergence $\tau_p$.

In Section \ref{IntModSec}, we indicate how our notion of completeness provides a method of integration in complete topological modules. We conclude in Section \ref{ConclSec} with a summary and questions.

\begin{comment}
%%##############################################
%\newpage
\vspace{0.5cm}
\hrule
\vspace{0.1cm}
\textcolor{red}{To continue revision ...}
\vspace{0.1cm}
\hrule
\vspace{0.5cm}
%%##############################################
\end{comment}

%%########################################################################################################################
%%########################################################################################################################
\section{\textnormal{\bf Preliminary observations and net-approach structures}}\label{Prelims}
%\begin{comment}
%%########################################################################################################################
%%########################################################################################################################
%\section{General completeness (Combine this with fact that every space is ``somehow metrizable'' to generalize past published work)}
\noindent In this section, we introduce $u$-structures as convenient generalizations of uniform structures. Next, we introduce net-approach structures generalizing convergence structures. Finally, we further generalize $u$-spaces to base spaces which can support a concrete net-approach structure (to be introduced in the next section).

\begin{dfn}[{$u$-structure, $u$-space, $u$-cauchyness, $u$-completeness}]\label{uCmpltnDfn}
As in \cite[Definition 25.1.5]{akofor2021}, let $X=(X,\tau)$ and $Z=(Z,\tau_Z)$ be spaces and $~u:(X\times X,\tau\times\tau)\rightarrow Z~$ a continuous map (call it a \textbf{$u$-structure} on $X$). A net $s:I\rightarrow X,~i\mapsto s_i$ is \textbf{$u$-cauchy} in $X$ if the associated bi-net $u\circ(s\times s):I\times I\stackrel{s\times s}{\longrightarrow}X\times X\stackrel{u}{\longrightarrow}Z,~(i,j)\mapsto u(s_i,s_j)$ converges in $Z$ to a point in the set $u(\{(x,x):x\in X\})$. The space $X$ is \textbf{$u$-complete} if every $u$-cauchy net in $X$ converges. ({\bf footnote}\footnote{
More generally, given nets $s:I\rightarrow X,~i\mapsto s_i$ and $t:J\rightarrow X,~j\mapsto t_i$, write $s\sim t$ if the associated bi-net $u\circ(s\times t):I\times I\stackrel{s\times t}{\longrightarrow}X\times X\stackrel{u}{\longrightarrow}Z,~(i,j)\mapsto u(s_i,t_j)$ converges in $Z$ to a point in the set $u(\{(x,x):x\in X\})$. Then $\sim$ is an equivalence relation on cauchy nets (provided $u$ is accordingly chosen/constructed), where we say $s$ is \textbf{$u$-cauchy} if $s\sim s$ (which then determines when $X$ is \textbf{complete} as usual). A \textbf{completion} of $(X,\tau)$ is a suitable topologization $([X],[\tau])$ of the set of equivalence classes $[X]:=\{[s]:\textrm{$s$ a cauchy net in $X$}\}$ (where $[s]:=\{t:s\sim t\}$) containing $X$ as a uniformly embedded dense subspace.
}). If we fix a collection of open sets $\mathcal{E}\subset\tau_Z$ in $Z$ (where $\mathcal{E}$ is closed under finite intersections), then the \textbf{$u$-ball} in $X$ of ``radius'' $\varepsilon\in\mathcal{E}\backslash\{\emptyset\}$ centered at $x\in X$ is
\begin{align}
\label{uBallEqn}B_\varepsilon(x)=B^u_\varepsilon(x):=\{y\in X:u(x,y)\in \varepsilon\}.~~(\textrm{\bf footnote}\footnotemark).
\end{align}
\footnotetext{
This is to be compared with the concept of ``set-valued metrics'' in \cite{akofor25svm}, where the condition $u(x,y)\in\varepsilon$ or $\{u(x,y)\}\subset\varepsilon$ (here) corresponds to the condition $d_{sv}(x,y)\prec\varepsilon$ (in \cite{akofor25svm}), for a set-valued metric
\[
d_{sv}:X\times X\rightarrow \mathcal{E}\subset\mathcal{P}(Z).
\]
}The map $u$ induces a topology on $X$ given by
\[
\tau_u:=\{O\subset X:\textrm{$\forall x\in O$, $\exists \varepsilon\in\mathcal{E}~$ s.t. $B_\varepsilon(x)\subset O$}\},~~\textrm{({\bf footnote}\footnotemark),}
\]\footnotetext{
\textbf{Showing $\tau_u$ is a topology:}  (i) It is clear that $\emptyset,X\in\tau_u$. (ii) It is also clear $\tau_u$ is closed under arbitrary unions. (iii) If $O_1,O_2\in\tau_u$, then for any $x\in O_1\cap O_2$, some $B_{\varepsilon_i}(x)\subset O_i$, and so $x\in B_{\varepsilon_1\cap\varepsilon_2}(x)=B_{\varepsilon_1}(x)\cap B_{\varepsilon_2}(x)\subset O_1\cap O_2$, hence so $O_1\cap O_2\in\tau_u$ (provided $\mathcal{E}$ is closed under finite intersections, so that $\varepsilon_1\cap\varepsilon_2\in\mathcal{E}$).
}and we call $X$ a \textbf{$u$-space} if $\tau=\tau_u$ (written $X=(X,u)$). \textbf{Product $u$-spaces} are discussed in Definition \ref{PrdUspDfn}. If we further assume $u$ and $\mathcal{E}$ are such that for any $\varepsilon\in\mathcal{E}$, there exists $\varepsilon'\in \mathcal{E}$ satisfying $u^{-1}(\varepsilon')\circ u^{-1}(\varepsilon')\subset u^{-1}(\varepsilon)$ and $u(u^{-1}(\varepsilon')\circ u^{-1}(\varepsilon'))\in\mathcal{E}$, ({\bf footnote}\footnote{
The composition of binary relations $R_1,R_2$ is given by ~$R_1\circ R_2:=\{(a,b):\textrm{$\exists c$ with $(a,c)\in R_1$ and $(c,b)\in R_2$}\}$.
}), then the $\tau_u$-interior of a set $A\subset X$ satisfies
\[
A^o=\{x\in X:~\textrm{some}~B_\varepsilon(x)\subset A\}.~\textrm{({\bf footnote}\footnotemark)}.
\]
\footnotetext{
Indeed, let $O:=\{x\in X:~\textrm{some}~B_\varepsilon(x)\subset A\}$ and fix $x\in O$. Let $B_\varepsilon(x)\subset A$, $u^{-1}(\varepsilon')\circ u^{-1}(\varepsilon')\subset u^{-1}(\varepsilon)$, and $y\in B_{\varepsilon'}(x)$. Then for any $z\in B_{\varepsilon'}(y)$, we have $u(x,y),u(y,z)\in \varepsilon'$, i.e., $(x,y),(y,z)\in u^{-1}(\varepsilon')$, which implies $(x,z)\in u^{-1}(\varepsilon')\circ u^{-1}(\varepsilon')$, and so $z\in B_{u(u^{-1}(\varepsilon')\circ u^{-1}(\varepsilon'))}(x)\subset B_\varepsilon(x)$. Therefore $B_{\varepsilon'}(y)\subset A$, which implies $y\in O$, and so $B_{\varepsilon'}(x)\subset O$. This proves that $O=A^o$ (as the largest $\tau_u$-open set contained in $A$).}
\end{dfn}

Several well known versions of completeness for topological spaces are special cases of $u$-completeness. In particular, for (i) a metric space $(X,d)$, (ii) a topological group $(X,-)$, (iii) a uniform space $(X,\mathcal{U})$, the associated maps $u:X\times X\rightarrow Z$ are respectively given by
\begin{enumerate}
\item[(i)] $u:(X\times X,\tau\times\tau)\rightarrow ([0,\infty),\mathcal{E}),~(x,y)\mapsto d(x,y)$, {~} $\mathcal{E}:=\{[0,r):r>0\}$
\item[(ii)] $u:(X\times X,\tau\times\tau)\rightarrow (X,\mathcal{E}),~(x,y)\mapsto xy^{-1}$, {~} $\mathcal{E}:=\{Q\in\tau:id\in Q\}$
\item[(iii)] $u:(X\times X,\tau\times\tau)\rightarrow (X\times X,\mathcal{E}),~(x,y)\mapsto (x,y)$, {~} $\mathcal{E}:=\mathcal{U}$.
\end{enumerate}

These $u$-based methods of completeness seem to suggest that the map $u$ itself is essential for the definition of a cauchy net in a given topological space. This observation is also supported by the existence of \emph{cauchy spaces} introduced as \emph{convergence spaces} with an automatic/built-in notion of a cauchy net (hence generalizing \emph{uniform spaces}), \cite{keller1968}. Moreover, in the $u$-based methods, convergence and cauchyness seem to require very different treatments, with convergence characterized independently of the way the topology is induced by $u$, while cauchyness (through its dependence on $u$) depends on the way the topology is induced by $u$.

However, we want to adopt a more natural treatment of completeness such that: (i) both the topology of the space and the notion of a cauchy net in the space do not explicitly require existence of the map $u$, and (ii) both \emph{convergence} and \emph{cauchyness} of nets emerge from the same operation, namely, \emph{approach} between nets (Definitions \ref{NtApSumrDfn} and \ref{ApprchDfn}). The basic idea is to obtain cauchyness from an induction-free relation (between nets) that restricts to convergence on constant nets, as summarized in Definition \ref{NtApSumrDfn} (a concrete realization of which is detailed in Sections \ref{ApprSec} and \ref{CmplBasSpSec}, through Definition \ref{ApprchDfn}, and concluding in Theorem \ref{NtAppSpThm} that a base space is a net-approach space).

The following two definitions use notation (for subnets) from the introduction of nets in Section \ref{ApprSec}. ({\bf footnote}\footnote{
In addition to Definitions \ref{NtApSumrDfn} and \ref{NtApSumrDfn2}, we may define a more primitive net-approach structure/space as follows:  Let $X$ be a set and $N[X]$ the class of nets on $X$. A relation $R\subset N[X]\times N[X]$ is a \textbf{net-approach structure} (in which case $uRv$ is written as $u\rightarrowtail v$), making $X=(X,R)$ a \textbf{net-approach space}, if it satisfies the following:
\begin{enumerate}[leftmargin=0.7cm]
\item\label{AbNetApA1} $R$ is (left) hereditary: If $u\rightarrowtail v$, then $u\circ\phi \rightarrowtail v$ for every subnet $u\circ\phi$.
\item\label{AbNetApA2} $R$ is transitive: If $u\rightarrowtail v$ and $v\rightarrowtail w$, then $u\rightarrowtail w$.
\item\label{AbNetApA3} $R$ is preserved by marginal convergence: If $u_{\bullet\bullet}\rightarrowtail v_{\bullet\bullet}$, $u_{\bullet\beta}\rightarrowtail u'_\beta$ (for each $\beta$), and $v_{\bullet j}\rightarrowtail v'_j$ (for each $j$), then $u'_\bullet\rightarrowtail v'_\bullet$.
\end{enumerate}
Any \emph{net-approach structure} induces a \emph{net-convergence structure} \cite{ObrienEtal2021,OBrien2021}, and this is clear from Definitions \ref{NtApSumrDfn} and \ref{NtApSumrDfn2}.
}).

\begin{dfn}[{Net-approach structure, Net-approach space}]\label{NtApSumrDfn}
Let $X=(X,\tau)$ be a space and $N(X)$ a \textbf{sufficient set of nets} in $X$ (i.e., the topological properties of $X$ can be fully described in terms of nets in $N(X)$) similar to that constructed in \cite{ObrienEtal2021,OBrien2021} for example. A relation $R\subset N(X)\times N(X)$ is a \textbf{net-approach structure} (in which case $uRv$ is written as $u\rightarrowtail v$), making $X=(X,\tau,R)$ a \textbf{net-approach space}, if it satisfies the following:
\begin{enumerate}[leftmargin=0.7cm]
\item\label{NetAppAx1} $R$ restricts to convergence on constant nets: For any $u\in N(X)$, $u\rightarrowtail\{x\}$ iff $u\rightarrow x$ (where $u\rightarrowtail\{x\}$ is also written as $u\rightarrowtail x$).
%%\item\label{NetAppAx2} $R$ is locally symmetric (or symmetric on convergent nets): For any $u\in N(X)$, if $u\rightarrowtail\{x\}$, then $\{x\}\rightarrowtail u$.
\item\label{NetAppAx2} $R$ is (left) hereditary: If $u\rightarrowtail v$, then $u\circ\phi \rightarrowtail v$ for every subnet $u\circ\phi$.
\item\label{NetAppAx3} $R$ is transitive: If $u\rightarrowtail v$ and $v\rightarrowtail w$, then $u\rightarrowtail w$.
\item\label{NetAppAx4} $R$ is preserved by marginal convergence: If $u_{\bullet\bullet}\rightarrowtail v_{\bullet\bullet}$, $u_{\bullet\beta}\rightarrowtail u'_\beta$ (for each $\beta$), and $v_{\bullet j}\rightarrowtail v'_j$ (for each $j$), then $u'_\bullet\rightarrowtail v'_\bullet$.
\end{enumerate}
A net-approach space will be called
\begin{itemize}[leftmargin=0.7cm]
\item \textbf{locally symmetric} if $R$ is symmetric on convergent nets, in that for any $u\in N(X)$ and $x\in X$, if $u\rightarrowtail\{x\}$, then $\{x\}\rightarrowtail u$.
\item \textbf{cauchy symmetric} if $R$ is cauchy symmetric on cauchy nets, in that for any $u,v\in N(X)$, if $v$ is cauchy and $u\rightarrowtail v$, then $v\rightarrowtail u$.
\item \textbf{symmetric} if $R$ is symmetric, i.e., for any $u,v\in N(X)$, if $u\rightarrowtail v$, then $v\rightarrowtail u$.
\end{itemize}
A net $u\in N(X)$ is \textbf{cauchy} (or \textbf{$R$-cauchy}) if, for all subnets $u\circ\phi$ and $u\circ\psi$, we have $u\circ\phi\rightarrowtail u\circ\psi$ (i.e., all subnets of $u$ approach each other). In a locally symmetric net-approach space, it is clear that \textbf{a convergent net is cauchy}. A net-approach space $X$ is \textbf{complete} if every cauchy net in $X$ converges, and a \textbf{completion} of $X$ is a complete net-approach space containing $X$ as a uniformly embedded dense subset. A map between net-approach spaces is \textbf{continuous} (resp., \textbf{cauchy continuous}) if it maps convergent nets to convergent nets (resp., cauchy nets to cauchy nets), and \textbf{uniformly continuous} if it maps approaching nets to approaching nets.
\end{dfn}

In the following section, we will construct (via Theorem \ref{NtAppSpThm}) a concrete net-approach relation (Definitions \ref{ApprchDfn},\ref{CovNetDfn}) on \emph{base spaces} (Definition \ref{LocSpDfn}).

Our \emph{net-approach} method, as summarized in Definition \ref{NtApSumrDfn}, may also be extended to a (net) convergence space, say using the notation of \cite{ObrienEtal2021,OBrien2021}, as follows.

\begin{dfn}[{NAC structure, NAC space}]\label{NtApSumrDfn2}
Let $X=(X,c)$ be a \emph{net convergence space} (with $c$ a \emph{net convergence structure} ({\bf footnote}\footnote{
As in \cite{ObrienEtal2021,OBrien2021}, a \textbf{net convergence structure} on $X$ is a map $c:X\rightarrow \mathcal{P}(N(X))$ such that:
\begin{enumerate}
\item Every constant net converges, i.e., $\{x\}\in c(x)$ or $\{x\}\stackrel{c}{\longrightarrow}x$, for each $x\in X$.
\item If a tail of $u$ is a subnet of $v$ (written $u\preceq v$) and $v\stackrel{c}{\longrightarrow}x$, then $u\stackrel{c}{\longrightarrow}x$.
\item If $u$ and $v$ have the same domain, $u\stackrel{c}{\longrightarrow}x$, $v\stackrel{c}{\longrightarrow}x$, and a net $w$ satisfies $w_\alpha\in\{u_\alpha,v_\alpha\}$ for each $\alpha$, then $w\stackrel{c}{\longrightarrow}x$.
\end{enumerate}
}))
and $N(X)$ a sufficient set of nets in $X$. A \textbf{net-approach convergence structure} (or \textbf{NAC structure}) on $X$ is a map $\rho:N(X)\rightarrow\mathcal{P}(N(X)),~u\mapsto\rho(u)$ (making $X=(X,c,\rho)$ a \textbf{NAC space}) with the properties below (which are based on the four axioms in Definition \ref{NtApSumrDfn}). The structure $\rho$ simply specifies, for each $v\in N(X)$, the set $\rho(v)\subset N(X)$ of all nets $u$ that \textbf{approach} $v$ through $\rho$, written $u\in\rho(v)$ (or $u\stackrel{\rho}{\rightarrowtail}v$).
\begin{enumerate}[leftmargin=0.7cm]
\item\label{NetCAppAx1} Approach restricts to convergence on constant nets ($\rho|_{ConstN(X)}=c$): A net $u$ approaches a constant net $\{x\}$ iff $u$ converges to $x$, i.e., $u\in\rho(\{x\})$ $\iff$ $u\stackrel{c}{\longrightarrow}x$.
%%\item\label{NetCAppAx2} Approach is locally symmetric (or symmetric on convergent nets): For every convergent nets $u$, $u\in\rho(\{x\})$ $\Rightarrow$ $\{x\}\in\rho(u)$.
\item\label{NetCAppAx2} Approach is transitive: If a net $u$ approaches a net $v$ which in turn approaches a net $w$, then $u$ approaches $w$, i.e., $u\in\rho(v)$ and $v\in\rho(w)$ $\Rightarrow$ $u\in\rho(w)$.
\item\label{NetCAppAx3} Approach is (left) hereditary: If a net $u$ approaches a net $v$, then every subnet of $u$ approaches $v$, i.e., $u\in\rho(v)$ $\Rightarrow$ $u\circ\phi\in\rho(v)$ for every subnet $u\circ\phi$.
\item\label{NetCAppAx4} Approach is preserved by marginal convergence: If $u_{\bullet\bullet}\in\rho(v_{\bullet\bullet})$, $u_{\bullet\beta}\in \rho(u'_\beta)$ (for each $\beta$), and $v_{\bullet j}\in\rho(v'_j)$ (for each $j$), then $u'_\bullet\in\rho(v'_\bullet)$.
\end{enumerate}
As before, we can define a \textbf{locally symmetric NAC space} (via $u\in\rho(\{x\})$ $\Rightarrow$ $\{x\}\in\rho(u)$), a \textbf{cauchy symmetric NAC space} (via $v$ cauchy and $u\in\rho(v)$ $\Rightarrow$ $v\in\rho(u)$), and a \textbf{symmetric NAC space} (via $u\in\rho(v)$ $\Rightarrow$ $v\in\rho(u)$). A net $u\in N(X)$ is \textbf{cauchy} (or \textbf{$\rho$-cauchy}) if all subnets of $u$ approach each other (i.e., $u\circ\phi\stackrel{\rho}{\rightarrowtail}u\circ\psi$ for all $\phi,\psi$). A NAC space $(X,c,\rho)$ is \textbf{complete} if every cauchy net in $(X,c)$ converges, and a \textbf{completion} of $(X,c,\rho)$ is a complete NAC space containing $(X,c,\rho)$ as a uniformly embedded dense subset. A map $f:X\rightarrow Y$ between NAC spaces is \textbf{continuous} (resp., \textbf{cauchy continuous}) if it maps convergent nets to convergent nets (resp., cauchy nets to cauchy net), and \textbf{uniformly continuous} if it maps approaching nets to approaching nets. The \textbf{convergence topology} (if it exists, making $c$ a \textbf{topological convergence}
-- {\bf footnote}\footnote{
Topological convergence admits the property that any net satisfying ``\emph{every subnet has a convergent subnet}'' must converge. Since convergence almost everywhere (CAE) has non-convergent nets with this property, CAE is not a topological convergence.
}) on $X=(X,c)$ induced by $c$ is
\[
\tau_c:=\{O\subset X:\textrm{every net in $X\backslash O$ that converges in $X$ converges to a point in $X\backslash O$}\}.
\]
\end{dfn}

We will now introduce the basic concepts underlying our concrete net-approach structure to be introduced in the next section.

\begin{dfn}[{Base space}]\label{LocSpDfn}
Let $X=(X,\tau)$ be a space. A \textbf{(graded) base space} of $X$ is a triple $X=(X,\tau,\mathcal{B})$, for a (sub)base $\mathcal{B}\subset\tau$ with a \textbf{grading} $\mathcal{B}=\bigcup_{\varepsilon\in\mathcal{E}}\mathcal{B}_\varepsilon$ for open covers $\mathcal{B}_\varepsilon$ of $X$ (where a subset of $\mathcal{B}_\varepsilon$ is called a \textbf{homogeneous subset} of $\mathcal{B}$). ({\bf footnote}\footnote{
More generally, we can replace the base space with a \textbf{cover space} $X=(X,\tau,\mathcal{C})$, where $\mathcal{C}=\{\mathcal{C}_\varepsilon\}_{\varepsilon\in\mathcal{E}}$ is a family of open covers of $X$. Definition \ref{ApprchDfn} generalizes easily to cover spaces (with $\mathcal{C}_\varepsilon$ playing the role of $\mathcal{B}_\varepsilon$). However, we might need to settle for a new notion of convergence, since convergence wrt $\mathcal{C}$ (unlike that wrt $\mathcal{B}$) might not be equivalent to convergence wrt $\tau$.
}). We note that in Definition \ref{uCmpltnDfn}, $\mathcal{E}$ was a specific collection of open subsets of the auxiliary space $Z=(Z,\tau_Z)$, but here $\mathcal{E}$ is an arbitrary index set.

\end{dfn}
Every uniform space $X=(X,\mathcal{U})$ is associated the \textbf{standard uniform base space} $(X,\tau_\mathcal{U},\mathcal{B}_\mathcal{U})$, where $\tau_\mathcal{U}:=\{O\subset X:\forall x\in O,~\exists U\in\mathcal{U}~\textrm{s.t.}~B_U(x)\subset O\}$ is the $\mathcal{U}$-induced \textbf{uniform topology} and $\mathcal{B}_\mathcal{U}=\bigcup_{U\in\mathcal{U}}\mathcal{B}_U$ is the \textbf{graded standard base} of $\tau_\mathcal{U}$, with $\mathcal{B}_U$ the cover of $X$ consisting of open balls
\[
\textrm{$B_U(x)^o=\{x'\in X:\textrm{some}~B_V(x')\subset B_U(x)\}$, for $U\in\mathcal{U}$ and $x\in X$},
\]
where ~$B_U(x):=\{y\in X:(x,y)\in U\}$~ as in Equation (\ref{uBallEqn}). Subsequently, we will for convenience ignore the interior symbol $^o$ on the balls.

\begin{rmk}
Subsequently, every uniform space (e.g., a metric space) will by default be viewed as its standard uniform base space (i.e., the base space with respect to the base of open balls).
\end{rmk}

\begin{dfn}[{Filter}]
Let $X$ be a set. A \textbf{filter} in $X$ is a collection $\mathcal{F}\subset\mathcal{P}(X)$ of subsets of $X$ with these properties: (i) $\emptyset\not\in\mathcal{F}$; (ii) If $A,B\in\mathcal{F}$, then $A\cap B\in\mathcal{F}$; (iii) If $A\in\mathcal{F}$, then $B\in\mathcal{F}$ for all $B\supset A$.

Although it is not clear whether or not our discussion can be rephrased entirely in terms of filters (Question \ref{AprDfnQsn}), there is a close relationship between nets an filters (see \cite[Ch. IX]{Freiwald2014}, \cite{Bartle1955}) by which it is expected that most discussions in terms of nets can be expressed in terms of filters and vice versa. Filters will be strictly required only in a few instances, including Definition \ref{ConvChSpDfn} and Theorem \ref{EBPCSthm} to prove that a $csb$-space (Definition \ref{SymBsSp}) is a cauchy space. Nevertheless, as indicated in \cite{Bartle1955}, a study that flexibly employs both nets and filters interchangeably is most convenient, since nets and filters seem to be naturally complementary.

%% In fact, some of our nets based proofs (e.g., the proofs of Lemma \ref{tUConLmm4} and Theorem \ref{tUConThm} via Lemmas \ref{NFBscsLmm} and \ref{CchyFltLmm}) use knowledge of the close connection between nets and filters.
\end{dfn}

Throughout, the phrases ``cauchy'', ``complete'', ``uniform'', ``precompact'' should be prefixed by ``\emph{induction-free}'', but for notational convenience, we will drop the prefix.

%%########################################################################################################################
\section{\textnormal{\bf Concrete net-approach and completeness in base spaces}}\label{ApprSec}
%%########################################################################################################################
%%########################################################################################################################
\noindent
Throughout this section, unless stated otherwise, $X=(X,\tau,\mathcal{B})$ is a base space.

Let $\mathcal{D}\subset\tau$. We refer to an element of $\mathcal{D}$ as a \textbf{$\mathcal{D}$-open set} (or a \textbf{$\mathcal{D}$-neighborhood}) in $X$. Similarly, we refer to an element of $\mathcal{D}^c:=\{O^c:O\in\mathcal{D}\}$ as a \textbf{$\mathcal{D}$-closed set} in $X$. In order to fix notation, we now recall some basic concepts associated with nets.

\begin{dfn}[{Net, Tail, Eventuality (Ultimacy)}]
A \textbf{net}, or \textbf{Moore-Smith sequence}, in $X$ is a map $u=(u_i)_{i\in I}\equiv\{u_i\}_{i\in I}:I\rightarrow X$, $i\mapsto u_i$ on a directed set $I=(I,\leq)$. ({\bf footnote}\footnote{
The directed set here is based on a \emph{pre-order} that need not be a \emph{partial order}. This is because in applications, equality of indices $i$ (or of tails $[i,I]$) of nets $u=\{u_i\}_{i\in I}$ is rarely required.
}). A \textbf{sequence} in $X$ is a net on $\mathbb{N}$.

Let $u:I\rightarrow X$ be a net. Given $i\in I$, let $[i,I]:=\{j\in I:j\geq i\}$ (and call it the \textbf{upper segment} at $i$), which we also write as $[i,~]$ when $I$ is understood, unimportant, or unspecified. The \textbf{tail} of $u$ at $i\in I$ is the restriction $u_{[i,I]}:=u|_{[i,I]}$. A net is \textbf{eventually} (or \textbf{ultimately}) \textbf{constant} if it has a \textbf{constant tail} (i.e., a tail with at most one element). A net is \textbf{nontrivial} if it is not eventually constant (i.e., it has no constant tail).
\end{dfn}

\begin{dfn}[{Subnet, Eventuality, Frequency, Derived filter}]\label{SubnetDfn}
Let $I,J$ be directed sets. A subset $C\subset I$ is a \textbf{cofinal subset} if every element $i\in I$ has an upper bound $c\geq i$, $c\in C$. A map $\phi:I\rightarrow J$ is a \textbf{cofinal map} if for each $j\in J$ there exists $i\in I$ such that $\phi([i,I])\subset[j,J]$ (i.e., $\phi(k)\geq j$ for all $k\geq i$) (in particular, $\phi(I)\subset J$ is a cofinal subset). A net $p:I\rightarrow X$ is a \textbf{subnet} ({\bf footnote}\footnote{
In \cite{AarAnd1972}, given nets $u:I\rightarrow X$ and $v:J\rightarrow X$, we say $u$ is a \textbf{subnet} (call it an \textbf{$AA$-subnet}) of $v$ if for any $j\in J$, there is $i\in I$ such that $u([i,~])\subset v([j,~])$, i.e., every tail of $v$ contains a tail of $u$. Two nets are \textbf{$AA$-equivalent} if they are $AA$-subnets of each other. This alternative definition of a subnet (which is obviously simpler) is both sufficient and more convenient/efficient for expressing the connection between nets and filters \cite{Bartle1955}.
}) of a net $q:J\rightarrow X$ (written $p\subset q$) if there exists a cofinal map $\phi:I\rightarrow J$ such that $~p=q\circ \phi:I\stackrel{\phi}{\longrightarrow}J\stackrel{q}{\longrightarrow}X$. ({\bf footnote}\footnote{Given a nontrivial net $u$, a subnet of $u$ is nontrivial $\iff$ it intersects (i.e., contains a point from) every tail of $u$. By the definition of a subnet, every subnet of a nontrivial net is nontrivial.}).

A net $u:I\rightarrow X$ is \textbf{eventually in} (resp., \textbf{frequently in}) $A\subset X$, written $T_u\subset A$ (resp., $S_u\subset A$), if $A$ contains a tail (resp., subnet) of $A$. Given a collection of open sets $\mathcal{O}\subset\tau$, the \textbf{derived $\mathcal{O}$-filter} of $u$ is $\mathcal{F}^\mathcal{O}_u:=\{A\subset X:T_u\subset O\subset A$, for some $O\in\mathcal{O}\}$. The \textbf{derived filter} of $u$ is $\mathcal{F}_u:=\mathcal{F}_u^\tau$ (i.e., the derived $\tau$-filter of $u$). We note that for any $A\subset X$, $u$ is either eventually in $A$ or frequently in $X\backslash A$.
\end{dfn}

The notion of \emph{net-approach} (or approach between nets), which we will now define, is an extension of the notion of \emph{topological convergence} that extends the notion of \emph{topological cauchyness} from uniform spaces to base spaces in an induction-free way.

\begin{dfn}[{Approach, Cauchy net, Derived net, Cauchy filter}]\label{ApprchDfn}
Let $u=\{u_i\}_{i\in I}$, $v=\{v_j\}_{j\in J}$ be nets in $X$. We write $\mathcal{O}_\varepsilon(u):=\{O_\varepsilon(u_i)\ni u_i\}_{i\in I}\subset\mathcal{B}_\varepsilon$ for a homogeneous selection of $\mathcal{B}_\varepsilon$-neighborhoods $O_\varepsilon(u_i)$ of points $u_i$ of $u$. We say $u$ \textbf{approaches} $v$ (written  $u\rightarrowtail v$) if for any $\mathcal{O}_\varepsilon(v)\subset\mathcal{B}_\varepsilon$, there exists $j^{\mathcal{O}_\varepsilon(v)}\in J$ such that for each $j\geq  j^{\mathcal{O}_\varepsilon(v)}$, $ O_\varepsilon(v_j)$ contains a tail of $u$. (That is, $u\not\rightarrowtail v$ iff there exists $\mathcal{O}_{\varepsilon_o}(v)\subset\mathcal{B}_{\varepsilon_o}$ such that for any $j\in J$, there exists $j_o(j)\geq j$ such that $O_{\varepsilon_o}(v_{j_o(j)})$ excludes a subnet [i.e., does not contain a tail] of $u$. Equivalently, $u\not\rightarrowtail v$ iff there exists $\mathcal{O}_{\varepsilon_o}(v\circ\psi_o)\subset\mathcal{B}_{\varepsilon_o}$ (for some $\varepsilon_o\in\mathcal{E}$ and some subnet $v\circ\psi_o:J\rightarrow X$) such that each $O_{\varepsilon_o}(v\circ\psi_o(j))$ excludes a subnet of $u$.) We say $u$ is a \textbf{cauchy net} if all subnets of $u$ approach each other, i.e., for all subnets $u\circ\phi$ and $u\circ\psi$, we have $u\circ\phi\rightarrowtail u\circ\psi$. A filter $\mathcal{F}$ in $X$ is a \textbf{cauchy filter} if every \textbf{derived net} $w:(\mathcal{F},\supset)\rightarrow X,~F\mapsto w(F)\in F$ of $\mathcal{F}$ is cauchy ({\bf footnote}\footnote{
Equivalently, $\mathcal{F}$ is \textbf{cauchy} if the unique associated net $w_\mathcal{F}$ (which satisfies $\mathcal{F}=\mathcal{F}_{w_\mathcal{F}}$) of $\mathcal{F}$ is cauchy, where
\[
\textrm{$w_\mathcal{F}:(\{(x,F):x\in F\in\mathcal{F}\},\leq)\rightarrow X$, $(x,F)\mapsto x$, {~} where ``$(x,F)\leq (x',F')$ iff $F\supset F'$''.}
\]
We note that given a net $u$, we have $u\neq w_{\mathcal{F}_u}$ in general. That is, $F\mapsto w_\mathcal{F}$ is injective but $u\mapsto \mathcal{F}_u$ is not.
}).
\end{dfn}

\begin{question}\label{AprDfnQsn}
Can the definition of approach between nets (in Definition \ref{ApprchDfn}) be given in terms of filters?
\end{question}

\begin{dfn}[{Convergent net, Limit point}]\label{CovNetDfn}
A net $u$ \textbf{converges to} a point $x\in X$ (called a \textbf{limit} or \textbf{limit point} of $u$, which is not necessarily unique, and written $u\rightarrow x$) if $u\rightarrowtail\{x\}$ (i.e., every $\mathcal{B}$-neighborhood of $x$ contains a tail of  $u$), where $u\rightarrowtail\{x\}$ is also written as $u\rightarrowtail x$. (That is, $u\not\rightarrow x$ iff there exists a $\mathcal{B}$-neighborhood of $x$ that excludes a subnet of $u$.) We say $u$ \textbf{converges} (or $u$ is \textbf{convergent}) in $X$ if $u$ has a limit point in $X$.
\end{dfn}

\begin{rmk}\label{CmpCntRmk}
Recall that given spaces $X$ and $Y$, (i) a set $A\subset X$ is closed iff every net in $A$ that converges in $X$ has all of its limit points lying in $A$, (ii) $X$ is compact iff every net in $X$ has a convergent subnet, and (iii) a map $f:X\rightarrow Y$ is continuous iff it maps convergent nets to convergent nets.
\end{rmk}

\begin{dfn}[{Completeness, Completion, Continuity, Cauchy continuity, Uniform continuity}]\label{ComplDfn}
Let $X,Y$ be base spaces. $X$ is \textbf{complete} if every cauchy net in $X$ converges in $X$. $Y$ is a \textbf{completion} of $X$ if $Y$ is (i) complete and (ii) contains $X$ as a uniformly embedded subspace (i.e., $Y$ is complete, there is a uniform embedding $h:X\hookrightarrow Y$, and $cl_Y(h(X))=Y$). A map $f:X\rightarrow Y$ is \textbf{continuous} (resp., \textbf{cauchy continuous}) if it maps convergent nets to convergent nets (resp., cauchy nets to cauchy nets), i.e., every convergent net $u\rightarrow x$ (resp, cauchy net $u$) in $X$ gives a convergent net $f\circ u\rightarrow f(x)$ (resp., cauchy net $f\circ u$) in $Y$. The map is \textbf{uniformly continuous} (or \textbf{uniform}) if it maps approaching nets to approaching nets (i.e., for any approach of nets $u\rightarrowtail v$ in $X$, we get an approach of nets $f\circ u\rightarrowtail f\circ v$ in $Y$). ({\bf footnote}\footnote{
See the footnote in Definition \ref{IsoBspsDfn} for an alternative meaning/interpretation of a uniform map (as a morphism in the category of base spaces).
}).
\end{dfn}

Let $X$ be a base space. Given $x,y\in X$, we say $x$ \textbf{approaches} $y$ (written $x\rightarrowtail y$) if $\{x\}\rightarrowtail\{y\}$ (i.e., every $\mathcal{B}$-neighborhood of $y$ contains $x$), and we say $x,y$ \textbf{approach each other} (written $x\sim y$) if $\{x\}\rightarrowtail\{y\}$ and $\{y\}\rightarrowtail\{x\}$ (i.e., $x$ and $y$ have the same $\mathcal{B}$-neighborhoods).
({\bf footnote}\footnote{The operation $\sim$ is an equivalence relation on $X$. With the set of equivalence classes $X_\sim=\{[x]:x\in X\}$, the associated quotient topology on $X_\sim$ is Hausdorff (with the Hausdorff quotient space $X_\sim$ called the \textbf{Hausdorffization} of $X$).
}).

\begin{lmm}[{Net-approach is transitive and hereditary}]\label{NetAppHT}
Consider nets in a base space. (i) Approach of nets is transitive, i.e., if $u\rightarrowtail v\rightarrowtail w$, then $u\rightarrowtail w$.
(ii) Approach of nets is (left) hereditary, i.e., $u\rightarrowtail v$ if and only if every subnet $u\circ\phi\rightarrowtail v$.
\end{lmm}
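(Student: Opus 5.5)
The plan is to argue directly from Definition \ref{ApprchDfn}. The one point needing care is that the definition quantifies over \emph{every} homogeneous selection $\mathcal{O}_\varepsilon(v)\subset\mathcal{B}_\varepsilon$. We are therefore free to build selections adapted to a given neighborhood of a point of $w$.

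For (ii), the forward direction uses only cofinality. Let $u\rightarrowtail v$, and let $u\circ\phi$ be a subnet with $\phi:K\rightarrow I$ cofinal. Fix a selection $\mathcal{O}_\varepsilon(v)$, and take $j\geq j^{\mathcal{O}_\varepsilon(v)}$, so that $O_\varepsilon(v_j)\supset u([i,I])$ for some $i$. By cofinality of $\phi$ there is $k$ with $\phi([k,K])\subset[i,I]$. Hence $O_\varepsilon(v_j)\supset (u\circ\phi)([k,K])$, and the same index $j^{\mathcal{O}_\varepsilon(v)}$ witnesses $u\circ\phi\rightarrowtail v$. The converse is immediate: $u=u\circ\mathrm{id}_I$ is itself a subnet of $u$, since the identity is cofinal.

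For (i), assume $u\rightarrowtail v\rightarrowtail w$, with $v:J\rightarrow X$ and $w:K\rightarrow X$, and fix a selection $\mathcal{O}_\varepsilon(w)=\{O_\varepsilon(w_k)\}_{k\in K}$. By $v\rightarrowtail w$ there is $k_0$ such that for each $k\geq k_0$ some tail $v([j_k,J])$ lies in $O_\varepsilon(w_k)$. Now fix such a $k$; the goal is to show that $O_\varepsilon(w_k)$ contains a tail of $u$. To this end define a homogeneous selection $\mathcal{O}^{(k)}_\varepsilon(v)\subset\mathcal{B}_\varepsilon$ as follows. For $j\geq j_k$, put $O^{(k)}_\varepsilon(v_j):=O_\varepsilon(w_k)$; this is legitimate because $v_j\in O_\varepsilon(w_k)\in\mathcal{B}_\varepsilon$. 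For the remaining $j$, choose any member of the cover $\mathcal{B}_\varepsilon$ containing $v_j$. Applying $u\rightarrowtail v$ to this selection gives $j^*$ such that for all $j\geq j^*$, the set $O^{(k)}_\varepsilon(v_j)$ contains a tail of $u$. Choosing $j\geq j^*,j_k$ by directedness of $J$, we get that $O_\varepsilon(w_k)$ contains a tail of $u$. This holds for every $k\geq k_0$, which proves $u\rightarrowtail w$.

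The main obstacle is exactly this step in (i). A single selection for $v$ cannot be made to match all the sets $O_\varepsilon(w_k)$ simultaneously, because different $k$ may compete for the same index $j$. The workaround is to use a separate selection $\mathcal{O}^{(k)}_\varepsilon(v)$ for each $k$. This is harmless because $k_0$ comes from $v\rightarrowtail w$ alone and does not depend on these auxiliary selections, and it is precisely the homogeneity (all sets in the same $\mathcal{B}_\varepsilon$) that makes the reassignment $O^{(k)}_\varepsilon(v_j):=O_\varepsilon(w_k)$ admissible.
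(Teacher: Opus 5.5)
Your proof is correct. Part (ii) is the routine check that the paper calls immediate.

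In part (i) you start from the paper's idea: transfer the neighborhoods $O_\varepsilon(w_k)$ to a selection for $v$ and invoke $u\rightarrowtail v$. However, you organize the quantifiers differently, and this is a real improvement. The paper chooses a \emph{single} selection $\{O_\varepsilon(v_j)\}_{j\in J}$ such that every $P_\varepsilon(w_k)$ with $k\geq k^{\mathcal{P}_\varepsilon(w)}$ contains some $O_\varepsilon(v_j)$ with $j\geq j^{\mathcal{O}_\varepsilon(v)}$. This has two problems:
\begin{itemize}
\item As written, the threshold $j^{\mathcal{O}_\varepsilon(v)}$ depends on the very selection being constructed.
\item A single selection serving all $k$ at once need not exist. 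If $J$ is a one-point set, it would require one member of $\mathcal{B}_\varepsilon$ lying inside every $P_\varepsilon(w_k)$ with $k\geq k^{\mathcal{P}_\varepsilon(w)}$, and a grading such as open intervals of fixed length $\varepsilon$ in $\mathbb{R}$ admits no such set.
\end{itemize}

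Your construction avoids both problems: you use a separate selection $\mathcal{O}^{(k)}_\varepsilon(v)$ for each $k\geq k_0$, take $k_0$ from $v\rightarrowtail w$ alone, and obtain $j^*$ only after that selection is fixed. In effect you isolate the sublemma that if $u\rightarrowtail v$, then every member of $\mathcal{B}$ containing a tail of $v$ also contains a tail of $u$. This is the $\mathcal{B}$-version of the filter inclusion in Lemma~\ref{NFBscsLmm}(ii), and transitivity follows from it at once. Your argument is the rigorous form of what the paper's sketch is aiming at.
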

\begin{proof}
(i) $u\rightarrow v$ means $\forall \{v_j\in O_\varepsilon(v_j)\in\mathcal{B}_\varepsilon\}_{j\in J}$, $\exists j^{\mathcal{O}_\varepsilon(v)}$ ($\mathcal{O}_\varepsilon(v):=\{O_\varepsilon(v_j)\}_{j\in J}$) such that $\forall j\geq  j^{\mathcal{O}_\varepsilon(v)}$, $ O_\varepsilon(v_j)$ contains a tail of  $u$. $v\rightarrow w$ means $\forall \{w_k\in P_\varepsilon(w_k)\in\mathcal{B}_\varepsilon\}_{k\in K}$, $\exists k^{\mathcal{P}_\varepsilon(w)}$ ($\mathcal{P}_\varepsilon(w):=\{P_\varepsilon(w_k)\}_{k\in K}$) such that $\forall k\geq  k^{\mathcal{P}_\varepsilon(w)}$, $ P_\varepsilon(w_k)$ contains a tail of  $v$. So, given $\{P_\varepsilon(w_k)\in\mathcal{B}_\varepsilon\}_{k\in K}$, by choosing $\{O_\varepsilon(v_j)\}_{j\in J}$ such that for each $k\geq  k^{\mathcal{P}_\varepsilon(w)}$, $\mathcal{P}_\varepsilon(w_k)$ contains some $O_\varepsilon(v_j)$ for $j\geq j^{\mathcal{O}_\varepsilon(v)}$ (hence contains a tail of $u$), we see that $u\rightarrowtail w$.

(ii) This is immediate from the definition of approach between nets.
\end{proof}

\begin{dfn}[{$lsb$-space, $csb$-space, $sb$-space}]\label{SymBsSp}
We will call $\mathcal{B}$ a \textbf{locally symmetric base}, making $X=(X,\tau,\mathcal{B})$ a \textbf{locally symmetric base space} or \textbf{$lsb$-space} (resp., \textbf{cauchy symmetric base}, making $X=(X,\tau,\mathcal{B})$ a \textbf{cauchy symmetric base space} or \textbf{$csb$-space}) if the grading $\mathcal{B}=\bigcup_{\varepsilon\in\mathcal{E}}\mathcal{B}_\varepsilon$ is such that if $u\rightarrowtail\{x\}$, then $\{x\}\rightarrowtail u$ (resp., if $v$ is cauchy and $u\rightarrowtail v$, then $v\rightarrowtail u$). Similarly, the base is a \textbf{symmetric base} (making the base space a \textbf{symmetric base space} or \textbf{$sb$-space}) if its grading is such that $u\rightarrowtail v$ implies $v\rightarrowtail u$ (i.e., approach is symmetric). We have the containment chain ``$sb$-spaces $\subset$ $csb$-spaces $\subset$ $lsb$-spaces $\subset$ base spaces''.

We note that $\{x\}\rightarrowtail u$ iff for any $\mathcal{O}_\varepsilon(u)\subset\mathcal{B}_\varepsilon$, there exists $i^{\mathcal{O}_\varepsilon(u)}$ such that $x\in O_\varepsilon(u_i)$ for every $i\geq i^{\mathcal{O}_\varepsilon(u)}$ (that is, $\{x\}\not\rightarrowtail u$ iff there exist neighborhoods $\mathcal{O}_{\varepsilon_o}(u\circ\phi_o)\subset \mathcal{B}_{\varepsilon_o}$ that each exclude $x$, for some $\varepsilon_o\in\mathcal{E}$ and a subnet $u\circ\phi_o$).
({\bf footnote}\footnote{
The standard uniform base space for a uniform space is an example of a $lsb$-space. Indeed: Let a net $w\rightarrow x$, i.e., every $B_U(x)^o\supset w_{[i_{x,U},I]}$ (for some $i_{x,U}\in I$). Fix $V\in\mathcal{U}$, which automatically implies $w_i\in B_V(w_i)^o$ for all $i\in I$. Then with $U:=V^{-1}$, we get $B_{V^{-1}}(x)^o\supset w_{[i_{x,V^{-1}},I]}$, and so $x\in B_V(w_i)^o$ for all $i\geq i_{x,V^{-1}}$ (where the inverse of a binary relation $R\subset X\times X$ is $R^{-1}:=((b,a):(a,b)\in R)$). Thus $w\rightarrowtail x$ implies $\{x\}\rightarrowtail w$. Similarly, $\{x\}\rightarrowtail w$ implies $w\rightarrowtail x$.

It follows similarly that any $u$-space $X$ (via a continuous map $u:X\times X\rightarrow (Z,\mathcal{E})$) with the following properties is a $lsb$-space (hence $lsb$-spaces strictly contain uniform spaces):
\begin{enumerate}
\item If $\varepsilon,\varepsilon'\in\mathcal{E}$, then $\varepsilon\cap\varepsilon'\in\mathcal{E}$.
\item $\forall\varepsilon\in\mathcal{E}$, $\exists\varepsilon'\in\mathcal{E}$ such that $u^{-1}(\varepsilon')\circ u^{-1}(\varepsilon')\subset u^{-1}(\varepsilon)$ and $u(u^{-1}(\varepsilon')\circ u^{-1}(\varepsilon'))\in\mathcal{E}$.
\item $\forall\varepsilon\in\mathcal{E}$, $\exists\varepsilon'\in\mathcal{E}$ such that $u^{-1}(\varepsilon')\subset[u^{-1}(\varepsilon)]^{-1}$.
\end{enumerate}
}).
\end{dfn}

\begin{rmk}[{Equivalent nets}]\label{EqvNetRmk}
In a base space, we say two nets $u$ and $v$ are \textbf{(approach-) equivalent}, written $u\sim v$, if $u\rightarrowtail v$ and $v\rightarrowtail u$. In a base space, the following hold:
\begin{enumerate}
\item  If $u\sim v$, then $u\rightarrow x$ iff $v\rightarrow x$.
\item $u$ is cauchy iff for all subnets $u\circ\phi$ and $u\circ\psi$, we have $u\circ\phi\sim u\circ\psi$ (iff $u\sim u$).
\end{enumerate}
In a $lsb$-space, the following holds:
\begin{enumerate}[resume]
\item $u\rightarrow x$ iff $u\sim\{x\}$.
\end{enumerate}
\end{rmk}

\begin{lmm}
Consider nets in a $lsb$-space. (i) Every subnet of a cauchy net is cauchy. (ii) A cauchy net converges (iff every subnet converges) iff one subnet converges. (This follows from transitivity of approach.)
\end{lmm}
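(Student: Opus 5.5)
For (i) I will only use Definition \ref{ApprchDfn} and the fact that subnets compose. Let $u$ be cauchy and $u\circ\theta$ a subnet. Any two subnets of $u\circ\theta$ have the form $u\circ\theta\circ\phi$ and $u\circ\theta\circ\psi$. A composite of cofinal maps is cofinal, so these are also subnets of $u$. Since $u$ is cauchy, $u\circ(\theta\circ\phi)\rightarrowtail u\circ(\theta\circ\psi)$. Hence $u\circ\theta$ is cauchy. No symmetry hypothesis is needed here.

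For (ii) I will prove a cycle of implications for a cauchy net $u$ in a $lsb$-space: $u$ converges $\Rightarrow$ every subnet converges $\Rightarrow$ some subnet converges $\Rightarrow$ $u$ converges. The middle implication is trivial, since $u$ is a subnet of itself via the identity.

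For the first implication, suppose $u\rightarrow x$, i.e. $u\rightarrowtail\{x\}$. Lemma \ref{NetAppHT}(ii) gives $u\circ\phi\rightarrowtail\{x\}$ for every subnet $u\circ\phi$, which is exactly $u\circ\phi\rightarrow x$. So this step needs neither cauchyness nor local symmetry.

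The last implication is the main step. Suppose $u\circ\psi\rightarrow x$, i.e. $u\circ\psi\rightarrowtail\{x\}$.
\begin{enumerate}
\item Cauchyness of $u$ applied to the subnets $u=u\circ\mathrm{id}$ and $u\circ\psi$ gives $u\rightarrowtail u\circ\psi$.
\item Transitivity, Lemma \ref{NetAppHT}(i), applied to $u\rightarrowtail u\circ\psi\rightarrowtail\{x\}$ gives $u\rightarrowtail\{x\}$, i.e. $u\rightarrow x$.
\end{enumerate}
This is the transitivity argument the statement alludes to.

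This argument does not appear to use the $lsb$ hypothesis. I would note that the $lsb$ condition is what makes ``convergent implies cauchy'' hold (Remark \ref{EqvNetRmk}), which puts the lemma in context. If a symmetric form is wanted, namely that $u\sim\{x\}$ as in Remark \ref{EqvNetRmk}(3), local symmetry upgrades $u\rightarrowtail\{x\}$ to $\{x\}\rightarrowtail u$.

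The only step that needs care is the transitivity lemma itself. Its proof in Lemma \ref{NetAppHT} is sketchy, so I would check that it applies when the middle net $u\circ\psi$ is indexed by a different directed set. Its statement is general, so I will simply invoke it as given.
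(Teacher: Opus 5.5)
Your proof is correct and follows the paper's route: (i) holds because a subnet of a subnet is a subnet, and the key step in (ii) is $u\rightarrowtail u\circ\psi\rightarrowtail\{x\}$ plus transitivity, which is what the paper's parenthetical points to; you are also right that local symmetry is never used. You do not need to rely on the sketchy general transitivity lemma, since the special case you need is immediate: for a $\mathcal{B}_\varepsilon$-neighborhood $O$ of $x$, take the selection for $u\circ\psi$ to be $O$ on a tail of $u\circ\psi$ lying in $O$ (and arbitrary members of $\mathcal{B}_\varepsilon$ elsewhere), and then $u\rightarrowtail u\circ\psi$ forces $O$ to contain a tail of $u$.
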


\begin{lmm}
In a $lsb$-space, every convergent net is a cauchy net.
\end{lmm}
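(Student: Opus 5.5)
Let $u=\{u_i\}_{i\in I}$ converge to $x$, i.e.\ $u\rightarrowtail\{x\}$. We must show that for any two subnets $u\circ\phi$ and $u\circ\psi$ we have $u\circ\phi\rightarrowtail u\circ\psi$. The plan is to route the approach through the constant net $\{x\}$ and then invoke transitivity (Lemma \ref{NetAppHT}(i)).

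First, by heredity (Lemma \ref{NetAppHT}(ii)), $u\rightarrowtail\{x\}$ gives $u\circ\phi\rightarrowtail\{x\}$. Second, we need $\{x\}\rightarrowtail u\circ\psi$. Here the local symmetry of the $lsb$-space is used, but it must be applied to the subnet, not to $u$ itself. Since $u\circ\psi$ is a subnet of a net converging to $x$, it also converges to $x$: every $\mathcal{B}$-neighborhood of $x$ contains a tail of $u$, and by cofinality of $\psi$ it contains a tail of $u\circ\psi$. Equivalently, this is Lemma \ref{NetAppHT}(ii) again. Thus $u\circ\psi\rightarrowtail\{x\}$, and local symmetry (Definition \ref{SymBsSp}) yields $\{x\}\rightarrowtail u\circ\psi$. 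Transitivity then gives $u\circ\phi\rightarrowtail\{x\}\rightarrowtail u\circ\psi$, hence $u\circ\phi\rightarrowtail u\circ\psi$. Since $\phi,\psi$ are arbitrary, $u$ is cauchy.

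The only potential subtlety is the transitivity step when the middle net is the constant net $\{x\}$, indexed by a one-point directed set. Lemma \ref{NetAppHT}(i) is stated for general nets, so it applies. If one prefers a direct check, it is short: given a homogeneous selection $\mathcal{O}_\varepsilon(u\circ\psi)$, the relation $\{x\}\rightarrowtail u\circ\psi$ says that eventually $x\in O_\varepsilon(u_{\psi(j)})$. Each such set is then a $\mathcal{B}$-neighborhood of $x$, so, since $u\circ\phi\rightarrow x$, it contains a tail of $u\circ\phi$. This is exactly $u\circ\phi\rightarrowtail u\circ\psi$. I expect no real obstacle; the main care point is remembering to apply local symmetry to the subnet $u\circ\psi$ rather than to $u$.
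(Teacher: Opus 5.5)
Your proof is correct and follows the same route as the paper: convergence of both subnets to $x$, local symmetry to get $\{x\}\rightarrowtail u\circ\psi$, then transitivity through the constant net $\{x\}$. Your explicit choice to apply local symmetry to the subnet $u\circ\psi$ (rather than to $u$ and then pass to a subnet on the right, which Lemma~\ref{NetAppHT}(ii) does not cover) makes precise what the paper's phrase ``$u\sim x$ implies $u\circ\psi\sim x$'' leaves implicit.
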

\begin{proof}
Let $u$ be convergent in a $lsb$-space $X$, $x\in X$ any limit of $u$, and $u\circ\phi,u\circ\psi$ subnets of $u$. Then $u\sim x$ implies $u\circ\phi\sim x$ and $u\circ\psi\sim x$. Since $u\circ\phi\sim x\sim u\circ\psi$, by transitivity of approach, $u\circ\phi\sim u\circ\psi$.
\end{proof}

\begin{prp}\label{UnifCsbPrp}
(i) A uniform space is a $csb$-space. (ii) Uniform spaces are strictly contained in $lsb$-spaces.
\end{prp}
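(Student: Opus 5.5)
Part (i) reduces to the entourage calculus, and (ii) needs an explicit $lsb$-space that no uniformity can produce. In a standard uniform base space $(X,\tau_\mathcal{U},\mathcal{B}_\mathcal{U})$ I will use one translation throughout: $u\rightarrowtail v$ is equivalent to
\[
\textrm{for every } U\in\mathcal{U} \textrm{ there is } j_U \textrm{ such that for each } j\geq j_U,\ u \textrm{ is eventually in } B_U(v_j).
\]
This is immediate from Definition \ref{ApprchDfn}, since the homogeneous selections from $\mathcal{B}_U$ are exactly choices of balls $B_U(v_j)^o$ (up to the interior symbol, which one absorbs by passing to a smaller entourage).

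\textbf{Part (i).} Let $v$ be cauchy and $u\rightarrowtail v$. Fix $W\in\mathcal{U}$ and choose a symmetric $V\in\mathcal{U}$ with $V\circ V\circ V\subset W$.
\begin{enumerate}
\item Applying the cauchy condition $v\rightarrowtail v$ with the entourage $V$ gives $j_1$ such that for $j\geq j_1$ the net $v$ is eventually in $B_V(v_j)$. Comparing two indices $j,j'\geq j_1$, this yields $(v_j,v_{j'})\in V\circ V$ for all $j,j'$ beyond some index. This is the usual entourage form of cauchyness.
\item Applying $u\rightarrowtail v$ with $V$ gives $j_2$ such that for each $j\geq j_2$ some tail of $u$ lies in $B_V(v_j)$, i.e.\ $(v_j,u_i)\in V$ eventually in $i$.
\end{enumerate}
Now fix $j\geq j_1,j_2$. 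Eventually in $j'$ and in $i$ we have $(v_{j'},v_j)\in V\circ V$ and $(v_j,u_i)\in V$. Hence for each $u_i$ in that tail, $(u_i,v_{j'})\in W$ by symmetry of $V$. Therefore the tail of $v$ beyond $\max(j_1,j_2)$ lies in $B_W(u_i)$ for all large $i$. This is exactly $v\rightarrowtail u$.

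The one delicate point is the order of quantifiers. The definition of approach asks for a tail of $v$ inside $B_W(u_i)$ for each large $i$, so I must get a \emph{uniform} tail of $v$ that works for all those $u_i$. The choice of $V\circ V\circ V\subset W$ is what provides this.

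\textbf{Part (ii).} The inclusion of uniform spaces in $lsb$-spaces was already shown in the footnote to Definition \ref{SymBsSp}, and (i) gives it again, since every $csb$-space is an $lsb$-space. For strictness I will exhibit a base space that is $lsb$ but whose grading is not the standard grading $\mathcal{B}_\mathcal{U}$ of any uniformity $\mathcal{U}$.

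Plan A, the $u$-space route: use the three conditions in that footnote with a $u$-structure that is not a uniform structure. A candidate is $u(x,y)=d(x,y)$ for a non-symmetric ``quasi-metric'' that is nonetheless locally symmetric in the sense of condition (3), with $\mathcal{E}$ the sets $[0,r)$. Then I would check that the resulting base cannot be of the form $\{B_U(x)\}_{U\in\mathcal{U}}$.

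Plan B, simpler and my first attempt: take $X=\mathbb{R}$ with a single grade $\mathcal{B}=\mathcal{B}_{\varepsilon_0}$ equal to all open intervals. Then:
\begin{enumerate}
\item Convergence $u\rightarrowtail x$ is ordinary convergence, since every neighborhood of $x$ contains a basic interval around $x$.
\item For $\{x\}\rightarrowtail u$: any selection of intervals $O(u_i)\ni u_i$ can be chosen to exclude $x$ whenever $u_i\neq x$. So $\{x\}\rightarrowtail u$ forces $u$ to be eventually equal to $x$. Hence this base is not $lsb$ for nontrivial convergent nets.
\end{enumerate}
So the example must be adjusted. I would instead take a mixed grading, for instance grades $\varepsilon=r>0$ consisting of intervals of length exactly $2r$ centred anywhere, and verify local symmetry directly. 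Then I would show non-uniformity via the property proved in (i): produce a cauchy net $v$ and a net $u\rightarrowtail v$ with $v\not\rightarrowtail u$.

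Finding an explicit $lsb$ but non-$csb$ grading is the main obstacle. My first try is a space where balls shrink at varying rates, e.g.\ grades on $(0,\infty)$ given by $(x-rx^2,\,x+rx^2)$. Here a sequence $v_n\to\infty$ with small multiplicative steps might be cauchy while its approach to a sparser sequence fails to be symmetric.
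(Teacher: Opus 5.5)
Your part (i) is sound, but only under the reading in which a selection $\mathcal{O}_U(v)\subset\mathcal{B}_U$ consists of the balls $B_U(v_j)^o$ \emph{centred} at the points $v_j$. This is also the reading the paper tacitly uses in the footnote to Definition \ref{SymBsSp}, on which its own proof rests. Granting that reading, your route genuinely differs from the paper's. The paper first gets symmetry on convergent nets from local symmetry plus transitivity. It then transfers this to cauchy nets through the classical completion, which relies on approach being both preserved and reflected by the embedding. Your $V\circ V\circ V\subset W$ computation proves cauchy symmetry directly and more elementarily. You also do not need a tail of $v$ that is uniform in $i$, since the definition lets the tail depend on $i$.

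However, the translation is not ``immediate from Definition \ref{ApprchDfn}''. A member of $\mathcal{B}_U$ containing $v_j$ need not be centred at $v_j$, and under that literal reading (the one you use in Plan B) statement (i) is false. Take $\mathbb{R}$ with $U_r=\{(x,y):|x-y|<r\}$ and $u_n=1/n$. The sets $(\tfrac{1}{2n},\tfrac{1}{2n}+2r)\in\mathcal{B}_{U_r}$ contain $u_n$ for large $n$ but not $0$. So $\{0\}\not\rightarrowtail u$, although $u\rightarrowtail\{0\}$ and the constant net $\{0\}$ is cauchy. You must fix one convention and use it in both parts.

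Part (ii) has a genuine gap: no base space is ever shown to be $lsb$ and not a standard uniform base space. Your certificate is fine: a cauchy $v$ and a net $u\rightarrowtail v$ with $v\not\rightarrowtail u$ would, by (i), rule out every standard uniform base space. But none of your candidates supplies it.

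\begin{itemize}
\item The grading by intervals of length $2r$ centred anywhere is exactly $\{\mathcal{B}_{U_r}\}_{r>0}$ for the usual metric on $\mathbb{R}$. Under the centred reading it is uniform; under the literal one it fails $lsb$ by your own Plan B argument.
\item Plan A cannot work. For a quasi-metric $d$, condition (3) says $d(x,y)<r'\Rightarrow d(y,x)<r$. So the sets $\{d<r\}$ form a base for the uniformity of the metric $\max(d(x,y),d(y,x))$, and the $u$-balls are that uniformity's balls. Passing to a cofinal family of entourages does not change $\rightarrowtail$, so the space is $csb$ by (i). More generally, conditions (1)--(3) of that footnote make $\{u^{-1}(\varepsilon)\}$ a base of a uniformity whenever these sets contain the diagonal.
\item In the grading $(x-rx^2,x+rx^2)$ on $(0,\infty)$, every basic set is bounded. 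So a net $v_n\to\infty$ never satisfies $v\rightarrowtail v$ and is not cauchy.
\end{itemize}

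The paper's own proof of (ii) just points to that footnote, which gives the inclusion but no non-uniform witness, so it does not fill this hole for you either. A working witness must be checked in the same convention as (i), and it must be incomplete. Indeed, if every cauchy net converged, $lsb$ plus transitivity would already give $v\rightarrowtail u$ whenever $u\rightarrowtail v$ with $v$ cauchy.
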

\begin{proof}
(i) Let $X$ be a uniform space. Suppose $u\rightarrowtail v$. If $u\rightarrowtail x$ (for any $v$), then by the footnote in Definition \ref{SymBsSp} above, $\{x\}\rightarrowtail u$, which implies $\{x\}\rightarrowtail v$ (by transitivity), which implies $v\rightarrowtail x$ (by the footnote), which implies $v\rightarrowtail u$ (by transitivity). Therefore, if $v\rightarrowtail x$ (for any $u$), then $u\rightarrowtail x$ by transitivity, and so $v\rightarrowtail u$ as before. Hence, in a uniform space, approach is symmetric on convergent nets.

It follows that approach is also symmetric on cauchy nets, since every uniform space has a completion, in which every cauchy net converges: Indeed, a net in a uniform space is cauchy iff, via the uniform embedding, it is cauchy and so convergent in the completion. (\textbf{NB:} Thus, in a uniform space, a non-cauchy net can neither approach, nor be approached by, a cauchy net.)

(ii) This is evident from the footnote (in Definition \ref{SymBsSp}) above.
\end{proof}

\begin{thm}\label{CompCsbThm}
(i) A compact $lsb$-space is a $sb$-space. (ii) A continuous map on a compact $sb$-space is uniform. (iii) A continuous map between $lsb$-spaces is uniform on compact subsets.
\end{thm}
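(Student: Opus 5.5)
For (i) I will reduce everything to convergent subnets and use transitivity from Lemma \ref{NetAppHT}. Let $X$ be a compact $lsb$-space and suppose $u\rightarrowtail v$; the goal is $v\rightarrowtail u$. Arguing by contradiction, assume $v\not\rightarrowtail u$. By the negated form of approach in Definition \ref{ApprchDfn}, there are $\varepsilon_o$, a subnet $u\circ\psi_o$, and neighborhoods $\mathcal{O}_{\varepsilon_o}(u\circ\psi_o)\subset\mathcal{B}_{\varepsilon_o}$, each of which excludes a subnet of $v$. By compactness (Remark \ref{CmpCntRmk}(ii)), after passing to a further subnet we may take $u\circ\psi_o\rightarrow x$ for some $x\in X$.

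Since approach is hereditary (Lemma \ref{NetAppHT}(ii)), the subnet still satisfies $u\circ\psi_o\rightarrowtail v$. Local symmetry gives $\{x\}\rightarrowtail u\circ\psi_o$, and transitivity then gives $\{x\}\rightarrowtail v$. I next want to turn this into $v\rightarrowtail x$, and then into $v\rightarrowtail u\circ\psi_o$ via transitivity (since $\{x\}\rightarrowtail u\circ\psi_o$). That would contradict the excluding neighborhoods.

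The step $\{x\}\rightarrowtail v\Rightarrow v\rightarrowtail x$ is local symmetry in reverse, so it needs its own compactness argument. Suppose $v\not\rightarrowtail x$. Then some $\mathcal{B}$-neighborhood $O$ of $x$ excludes a subnet $v\circ\phi$, and by compactness we may assume $v\circ\phi\rightarrow y$. From $\{x\}\rightarrowtail v\circ\phi$ and $\{y\}\rightarrowtail v\circ\phi$ (local symmetry), together with $v\circ\phi\rightarrowtail y$, transitivity gives $\{x\}\rightarrowtail y$.

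I then need to conclude that $y$ is in the neighborhood $O$ of $x$, i.e.\ $\{y\}\rightarrowtail \{x\}$, in order to contradict the exclusion via an open-set argument. This symmetry between points is the main obstacle. Local symmetry applied to the constant net $\{x\}\rightarrow y$ yields $\{y\}\rightarrowtail\{x\}$, so $y\in O$. Since $O$ is open and $v\circ\phi\rightarrow y$, $O$ contains a tail of $v\circ\phi$, which is the contradiction.

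For (ii), take $f:X\to Y$ continuous with $X$ a compact $sb$-space, and $u\rightarrowtail v$ in $X$. To show $f\circ u\rightarrowtail f\circ v$, it suffices by a subnet/contradiction argument as above to reduce to subnets where $u\circ\phi\rightarrow x$. Then $v\rightarrowtail u\circ\phi$ by symmetry, and so $v\circ\psi\rightarrow x$ for any convergent subnet. Continuity gives $f\circ u\circ\phi\rightarrow f(x)$ and $f\circ v\circ\psi\rightarrow f(x)$. Using local symmetry in $Y$ (I will assume $Y$ is $lsb$, as in (iii)) and transitivity, $f\circ u\circ\phi\rightarrowtail f\circ v\circ\psi$. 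A failure of $f\circ u\rightarrowtail f\circ v$ would, by the negated definition, produce subnets on which the excluding neighborhoods persist, and passing to convergent subnets contradicts this.

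Part (iii) follows by restricting to a compact subset $K$, which with the induced graded base is a compact $lsb$-space. By (i), $K$ is an $sb$-space, and (ii) then applies.
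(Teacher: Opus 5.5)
Your argument is essentially correct, and for (i) it takes a genuinely different route from the paper. The paper regards the compact space as a uniformizable compact uniform space. It then invokes Proposition~\ref{UnifCsbPrp}, together with the existence of convergent subnets, to assert that $u\rightarrowtail v$ forces both nets to converge, whence $v\rightarrowtail u$.

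You stay inside the given grading $\mathcal{B}$ and use only compactness, heredity, transitivity and local symmetry. Your key observation is that local symmetry applied to a constant net gives $\{x\}\rightarrowtail\{y\}\Rightarrow\{y\}\rightarrowtail\{x\}$. That is exactly what makes $\{x\}\rightarrowtail v\Rightarrow v\rightarrow x$ go through.

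This buys you something real. Approach is defined through $\mathcal{B}$ rather than through $\tau$, and Proposition~\ref{UnifCsbPrp} concerns the standard base of a uniformity. So the paper's passage to a compact uniform space tacitly replaces the given grading by another one, which needs an argument the paper does not supply. Your proof establishes intrinsically the fact the paper asserts parenthetically: in a compact $lsb$-space, $u\rightarrowtail v$ forces $u$ and $v$ to converge to a common limit.

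You can also drop the outer contradiction. Once $v\rightarrow x$, transitivity gives $u\rightarrow x$ from $u\rightarrowtail v$, and then $v\rightarrowtail\{x\}\rightarrowtail u$.

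Three small repairs are needed.

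First, you use right heredity: $\{x\}\rightarrowtail v\Rightarrow\{x\}\rightarrowtail v\circ\phi$ in (i), and $v\rightarrowtail u\Rightarrow v\rightarrowtail u\circ\phi$ in (ii). Lemma~\ref{NetAppHT}(ii) only gives left heredity. Right heredity does hold, because a cofinal map sends cofinal subsets to cofinal subsets, so a bad selection along $v\circ\phi$ transfers to a bad selection along $v$.

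Second, the closing contradiction in (ii) does not close as written. Knowing $f\circ u\circ\phi\rightarrowtail f\circ v\circ\psi$ only puts tails of $f\circ u\circ\phi$ into the selected neighborhoods. That is compatible with those neighborhoods not containing a tail of $f\circ u$. Instead, note that $v\rightarrowtail u\circ\phi\rightarrow x$ gives $v\rightarrow x$, hence $u\rightarrow x$. Then $f\circ u\rightarrowtail\{f(x)\}\rightarrowtail f\circ v$ by continuity, local symmetry of $Y$ and transitivity. Your assumption that $Y$ is $lsb$ is genuinely needed, since otherwise two nets converging to the same point of $Y$ need not approach each other; the paper uses it tacitly.

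Third, in (iii) you should say why the trace grading $\{B\cap K:B\in\mathcal{B}_\varepsilon\}$ works. The set $B\cap K$ contains a tail of a net in $K$ iff $B$ does. So approach between nets in $K$ is the same whether computed in $K$ or in $X$, and local symmetry passes to $K$.
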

\begin{proof}
(i) Let $X$ be a compact $lsb$-space. Since every compact space is uniquely uniformizable, $X$ may be regarded as a compact uniform space. By Proposition \ref{UnifCsbPrp} (and the fact that every net in $X$ has a convergent subnet), if $u\rightarrowtail v$ (hence a convergent subnet of $u$ approaches $v$, making $v$ convergent) then $u$ and $v$ are both convergent, and so $v\rightarrowtail u$.

(ii) In a compact $sb$-space $X$, the only approaching nets are convergent nets (since every net has a convergent subnet), which are mapped by a continuous map $f:X\rightarrow Y$ to approaching convergent nets in the compact image $f(X)$.
\end{proof}

\begin{dfn}[{Precompactness (or Total boundedness)}]\label{PrecompDfn}
A base space $X$ is said to be \textbf{precompact} (or \textbf{totally bounded}) if every net in $X$ has a cauchy subnet.
\end{dfn}

\begin{question}\label{BodyQuest2} 
Consider $lsb$-spaces. It is clear that if a map is uniform, then it is cauchy continuous. When is the converse true, i.e., on which $lsb$-spaces (e.g., precompact spaces in the context of metric spaces) is a cauchy continuous map uniform?
\end{question}

\begin{thm}\label{CpctSpThm}
A $lsb$-space is compact if and only if it is both complete and precompact.
\end{thm}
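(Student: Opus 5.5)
The plan is to prove the two implications separately, relying only on the net characterization of compactness (Remark \ref{CmpCntRmk}(ii)), the definitions of completeness and precompactness (Definitions \ref{ComplDfn}, \ref{PrecompDfn}), and the two lemmas just proved for $lsb$-spaces: convergent nets are cauchy, and a cauchy net converges iff some subnet converges.

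($\Rightarrow$) Suppose $X$ is a compact $lsb$-space. For precompactness, I take an arbitrary net $u$ in $X$. By compactness it has a convergent subnet $u\circ\phi\rightarrow x$. In an $lsb$-space every convergent net is cauchy, so $u\circ\phi$ is a cauchy subnet of $u$, and $X$ is precompact. For completeness, I take a cauchy net $u$. Again $u$ has a convergent subnet by compactness. Part (ii) of the lemma on cauchy nets then says that a cauchy net with one convergent subnet converges, so $u$ converges and $X$ is complete. I would briefly verify that part (ii) really follows from transitivity as claimed. If $u\circ\phi\rightarrow x$, then local symmetry gives $\{x\}\rightarrowtail u\circ\phi$. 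Cauchyness gives $u\circ\psi\rightarrowtail u\circ\phi$ for every subnet $u\circ\psi$, in particular for the trivial subnet $u$ itself. Lemma \ref{NetAppHT}(i) then yields $u\rightarrowtail u\circ\phi\rightarrowtail x$, hence $u\rightarrow x$.

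($\Leftarrow$) Suppose $X$ is complete and precompact, and let $u$ be any net in $X$. Precompactness gives a cauchy subnet $u\circ\phi$, and completeness makes $u\circ\phi$ converge. So every net has a convergent subnet, and $X$ is compact by Remark \ref{CmpCntRmk}(ii).

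Only the forward direction uses the $lsb$ hypothesis, and the main point needing care is the claim that convergent nets (and subnets of cauchy nets) behave well. One technical check concerns the subnet notion: the definition of cauchy quantifies over all subnets $u\circ\phi,u\circ\psi$, and we need the identity map to be an admissible cofinal map, so that $u$ counts as a subnet of itself. We also need hereditarity (Lemma \ref{NetAppHT}(ii)) so that subnets of a subnet are subnets of $u$. With these, the argument is routine. The only further subtlety is that convergence here is defined via $\mathcal{B}$-neighborhoods, so I must note that it coincides with topological convergence because $\mathcal{B}$ is a base of $\tau$; otherwise Remark \ref{CmpCntRmk}(ii) could not be invoked.
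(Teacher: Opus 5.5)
Your proof is correct and takes the paper's route: the converse is exactly the paper's argument (precompactness gives a cauchy subnet, completeness makes it converge, and Remark \ref{CmpCntRmk}(ii) gives compactness), and your forward direction spells out what the paper calls ``clear'' (and records in Remark \ref{BasicResRmk}(1)), namely that convergent nets are cauchy and that a cauchy net with a convergent subnet converges. Two harmless slips: the local-symmetry step $\{x\}\rightarrowtail u\circ\phi$ in your check of the latter lemma is never used, since transitivity applied to $u\rightarrowtail u\circ\phi\rightarrowtail x$ already suffices; and the fact that a subnet of a subnet is a subnet comes from composing cofinal maps, not from Lemma \ref{NetAppHT}(ii).
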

\begin{proof}
It is clear that a compact $lsb$-space is both complete and precompact. Conversely, let a $lsb$-space $X$ be complete and precompact. Then any net in $X$ has a cauchy subnet (by precomactness) which converges (by completeness), i.e., every net in $X$ has a convergent subnet.
\end{proof}

\begin{dfn}[{Locally complete base space, Locally compact space}]
A base space $X=(X,\tau,\mathcal{B})$ is \textbf{locally complete} if every point of $X$ has a $\mathcal{B}$-neighborhood that is contained in a complete subset. Similarly, a space is \textbf{locally compact} if every point has a neighborhood that is contained in a compact subset.
\end{dfn}

\begin{rmk}[{Some basic results}]\label{BasicResRmk}~
\begin{enumerate}[leftmargin=0.8cm]
\item Every compact $lsb$-space is complete, since every cauchy net in it converges (as a cauchy net with a convergent subnet). Hence every locally compact Hausdorff $lsb$-space is locally complete.
\item An indiscrete $lsb$-space (as well as any finite $lsb$-space) is compact, hence complete.
\item A discrete $lsb$-space is complete (since every non ultimately constant net has subnets that do not approach each other, and so the only cauchy nets are ultimately constant nets), but an infinite discrete $lsb$-space is not compact (since the open cover by singletons has no finite subcover).
\item A complete subspace of a $lsb$-space is closed. Indeed, if a net in the subspace converges in the ambient/containing $lsb$-space, then the net is cauchy in the subspace, and hence converges in the subspace.
\item A closed subspace of a complete $lsb$-space is complete. To see this, let $X$ be a complete $lsb$-space and $A\subset X$ a closed subspace. Then a cauchy net in $A$ converges in $X$ (since $X$ is complete), and so converges in $A$ (since $A$ is closed).
\end{enumerate}
\end{rmk}

\begin{thm}[{Baire's theorem}]\label{BaireThm}
A locally complete regular Hausdorff $lsb$-space $X=(X,\tau,\mathcal{B})$ cannot be written as a countable union of nowhere dense sets. ({\bf footnote}\footnote{
(i) Recall that $X$ is \textbf{regular} (resp., \textbf{normal}) iff for any $x\in X$ (resp., closed set $C\subset X$), every neighborhood $x\in O\subset X$ (resp., $C\subset O\subset X$) contains the closure of another neighborhood $x\in U\subset \overline{U}\subset O\subset X$ (resp., $C\subset U\subset \overline{U}\subset O\subset X$).

(ii) Recall that $A\subset X$ is \textbf{nowhere dense} iff $(\overline{A})^o=\emptyset$, iff $(A^c)^o$ is dense in $X$.
}).
\end{thm}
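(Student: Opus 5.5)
We argue by contradiction and run the classical nested-neighborhoods argument, letting local completeness do the work that completeness does in the metric version. Suppose $X=\bigcup_{n\in\mathbb{N}}A_n$ with each $A_n$ nowhere dense. Replacing $A_n$ by $\overline{A_n}$, we may assume each $A_n$ is closed with empty interior, so each $X\backslash A_n$ is open and dense.

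We first localize into a complete set. If $X=\emptyset$ there is nothing to prove, so fix $x_0\in X$. By local completeness there is a $\mathcal{B}$-neighborhood $O_0\ni x_0$ with $O_0\subset K$ for some complete subset $K\subset X$. By Remark~\ref{BasicResRmk}(4), $K$ is closed in $X$ (subspaces of $lsb$-spaces being $lsb$-spaces with the induced graded base), and by Remark~\ref{BasicResRmk}(5) every closed subset of $K$ is complete.

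Next we build a nested sequence. Using regularity and density, choose inductively nonempty open sets $U_1,U_2,\dots$ and points $x_n\in U_n$ such that
\[
\overline{U_1}\subset O_0\backslash A_1,\qquad \overline{U_{n+1}}\subset U_n\backslash A_{n+1}.
\]
This is possible because $U_n\backslash A_{n+1}$ is a nonempty open set, since $A_{n+1}$ is closed with empty interior. Regularity then gives an open $U_{n+1}\ni x$ for any chosen $x\in U_n\backslash A_{n+1}$ with $\overline{U_{n+1}}\subset U_n\backslash A_{n+1}$. To make the resulting sequence cauchy we also want the $U_n$ to shrink in the sense of the grading. For this, take the $U_n$ to be basic sets $O_{\varepsilon_n}(x_n)\in\mathcal{B}_{\varepsilon_n}$, or subsets of them, with the grades chosen successively finer.

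This grade-refinement step is the main obstacle. An $lsb$-space has no countable descending family of grades in general, so the sequence $(x_n)$ need not be cauchy, and the nested closed sets $C_n:=\overline{U_n}\subset K$ need not have one-point-like intersection.

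Plan A avoids cauchyness by working with the filter generated by $\{C_n\}$. The $C_n$ are nested, nonempty and closed in the complete set $\overline{U_1}$. If the sequence $(x_n)$ (which lies eventually in each $C_n$) has a cauchy subnet, then completeness makes that subnet converge to some $x\in\overline{U_1}$. Since every $C_n$ is closed and eventually contains the subnet, $x\in\bigcap_n C_n\subset O_0\backslash\bigcup_n A_n$. This contradicts $X=\bigcup_n A_n$.

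Obtaining such a cauchy subnet is the delicate point. I would try first to choose each $U_{n+1}$ inside a $\mathcal{B}_\varepsilon$-set around $x_n$ for every grade $\varepsilon$ from a finite set $F_n$. The $F_n$ increase, and their union should be cofinal enough to test approach along the net indexed by $\mathbb{N}$. Local symmetry is then used to turn "$x_m\in O_\varepsilon(x_n)$ for $m\ge n$" into mutual approach of subnets, via the transitivity in Lemma~\ref{NetAppHT}.

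Plan B, if the grading is too large for Plan A, is to invoke compactness-like behaviour. Hausdorffness of $K$ together with the lemma that a cauchy net converges iff one subnet converges should reduce the problem to showing that the derived net of the filter base $\{C_n\}$ is cauchy. That is where I expect the real work to lie.
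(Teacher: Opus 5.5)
\textbf{There is a genuine gap, and it cannot be closed, because the statement itself is false under the paper's definitions.}

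Your reduction is sound as far as it goes. Once $(x_n)$ has a cauchy subnet, completeness of $\overline{U_1}$ (closed in the complete set $K$) and closedness of the nested $C_n$ put its limit in $\bigcap_n C_n\subset O_0\backslash\bigcup_n A_n$. But the existence of that cauchy subnet (equivalently, $\bigcap_n C_n\neq\emptyset$) is the entire content of the theorem, and neither plan supplies it:
\begin{itemize}
\item Plan A presupposes a countable family of grades that controls approach, and nothing in the hypotheses provides one.
\item Even with such a family, arranging $x_m\in O_\varepsilon(x_n)$ for $m\ge n$ for one chosen $\mathcal{B}_\varepsilon$-set does not make subnets of $(x_n)$ approach each other. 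Definition \ref{ApprchDfn} quantifies over \emph{every} homogeneous selection $O_\varepsilon(x_n)\ni x_n$ from $\mathcal{B}_\varepsilon$.
\item Plan B merely restates the missing step.
\end{itemize}

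Here is a counterexample. Take $X=\mathbb{Q}$ with its usual topology. Let $\mathcal{E}$ be the set of all partitions of $\mathbb{Q}$ into nonempty clopen sets, and put $\mathcal{B}_P:=P$; then $\mathcal{B}$ is the base of all nonempty clopen sets.
\begin{itemize}
\item \emph{Approach.} Since each $\mathcal{B}_P$ is a partition, every selection is forced: $O_P(v_j)=P[v_j]$, the cell containing $v_j$. One checks that $u\rightarrowtail v$ iff for every $P$ some single cell of $P$ contains tails of both $u$ and $v$. This relation is symmetric, so $X$ is an $sb$-space, in particular an $lsb$-space, and it is regular Hausdorff.
\item \emph{Completeness.} A cauchy net is eventually in a single cell of each clopen partition. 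Suppose it converged to no point. For each $q\in\mathbb{Q}$ choose a clopen $C_q\ni q$ containing no tail of the net, and enumerate $\mathbb{Q}=\{q_1,q_2,\dots\}$. The nonempty sets $C_{q_n}\backslash\bigcup_{k<n}C_{q_k}$ form a clopen partition, so one cell, lying inside some $C_{q_n}$, contains a tail. This is a contradiction, so $X$ is complete and hence locally complete.
\item \emph{Conclusion.} Yet $X=\bigcup_{q\in\mathbb{Q}}\{q\}$ is a countable union of nowhere dense sets.
\end{itemize}
Running your construction here with $A_n=\{q_n\}$ gives $\bigcap_n C_n=\emptyset$, so $(x_n)$ has no cauchy subnet at all. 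This is exactly the obstacle you flagged: here no countable family of grades is cofinal.

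The paper's own proof uses the same nested construction. It disposes of this step by asserting that $\bigcap_n C(x_n)=\emptyset$ forces $(x_n)$ to be cauchy, on the grounds that each $O_\varepsilon(x_{n_\beta})$ eventually contains whole sets $C(x_n)$. The example shows that inference is invalid, so the paper does not supply the missing idea either. A correct version would need an additional hypothesis of metric type, something letting a countable sequence of grades control approach, as in the classical complete-metric argument.
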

\begin{proof}
Let $A_1,A_2,\cdots\subset X$ be nowhere dense subsets of $X$. Since $A_1$ is nowhere dense, there is a point $x_1\in X\backslash A_1$ and a complete set $C_1(x_1)\ni x_1$ that does not meet $A_1$. Since $A_2$ is nowhere dense, hence nowhere dense in $C(x_1)$, there is a point $x_2\in C(x_1)\backslash A_2$ and a complete set $C(x_2)\ni x_2$, with $C(x_2)\subset C(x_1)$, that does not meet $A_1\cup A_2$. Continuing this way, we get a nested sequence  $C(x_1)\supset C(x_2)\supset C(x_3)\supset\cdots$ of complete sets such that for each $n$, $C(x_n)\cap\bigcup_{i=1}^nA_i=\emptyset$ (hence $\bigcap_{n=1}^\infty C(x_n)\cap\bigcup_{n=1}^\infty A_n=\emptyset$). Let $C:=\bigcap_nC(x_n)$.

If $C\neq\emptyset$ (so $C$ is a nonempty complete set), then the proof is complete. Suppose $C=\emptyset$.

Consider any two subnets $x_{n_\alpha},x_{n_\beta}$ of $x_n$. Let $\mathcal{O}_\varepsilon(\{x_{n_\beta}\}):=\{O_\varepsilon(x_{n_\beta}): x_{n_\beta}\in O_\varepsilon(x_{n_\beta}),\forall\beta\}\subset\mathcal{B}_\varepsilon$ (for a fixed $\varepsilon$). Since $C(x_n)\rightarrow \emptyset$, there exists $\beta_\varepsilon=\beta^{\mathcal{O}_\varepsilon(\{x_{n_\beta}\})}$ such that for each $\beta\geq n_{\beta_\varepsilon}$, $O_\varepsilon(x_{n_\beta})$ contains every member of a tail of $\{C(x_n)\}$ (hence of a tail of $\{C(x_{n_\alpha})\}$), and so contains a tail of $\{x_{n_\alpha}\}$. Therefore $\{x_n\}$ is cauchy (hence converges in each $C(x_n)$ to a point in $C$, which gives a contradiction).

Therefore $C\subset\left(\bigcup_nA_n\right)^c$ is a nonempty complete set.
\end{proof}

%%########################################################################################################################
\section{\textnormal{\bf Completion of base spaces}}\label{CmplBasSpSec}
%%########################################################################################################################
%%########################################################################################################################
\begin{lmm}[{Marginal convergence preserves net-approach}]\label{BcnvLimit}
Consider nets in a base space. (As in the diagram below) If $u_{\bullet\bullet}\rightarrowtail v_{\bullet\bullet}$, $u_{\bullet\beta}\rightarrowtail u'_\beta$ (for each $\beta$), and $v_{\bullet j}\rightarrowtail v'_j$ (for each $j$), then $u'_\bullet\rightarrowtail v'_\bullet$.
\end{lmm}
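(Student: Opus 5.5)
Write $u_{\bullet\bullet}=\{u_{\alpha\beta}\}$ and $v_{\bullet\bullet}=\{v_{ij}\}$ for the double nets, on directed products $A\times B$ and $I\times J$. The hypotheses say: $u_{\bullet\beta}\rightarrowtail u'_\beta$ for each $\beta\in B$, and $v_{\bullet j}\rightarrowtail v'_j$ for each $j\in J$. The plan is to prove $u'_\bullet\rightarrowtail v'_\bullet$ straight from Definition \ref{ApprchDfn}, using the transitivity and heredity of Lemma \ref{NetAppHT} as a guide. In other words, we will chain the three approaches
\[
u'_\bullet \;\leftsquigarrow\; u_{\bullet\bullet}\rightarrowtail v_{\bullet\bullet}\rightarrowtail v'_\bullet,
\]
where the first link must be read ``backwards''. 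This backward link is the delicate point.

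\textbf{Step 1 (right-hand link).} Fix $\varepsilon$ and a homogeneous selection $\mathcal{O}_\varepsilon(v')=\{O_\varepsilon(v'_j)\}_{j\in J}\subset\mathcal{B}_\varepsilon$. For each $j$, the approach $v_{\bullet j}\rightarrowtail v'_j$ is approach to a constant net. By Definition \ref{CovNetDfn} it is convergence, so $O_\varepsilon(v'_j)$ contains a tail $\{v_{ij}\}_{i\geq i_j}$. Each $v_{ij}$ with $i\ge i_j$ then lies in $O_\varepsilon(v'_j)\in\mathcal{B}_\varepsilon$. We define a homogeneous selection $\mathcal{O}_\varepsilon(v_{\bullet\bullet})$ by putting $O_\varepsilon(v_{ij}):=O_\varepsilon(v'_j)$ for $i\ge i_j$, and an arbitrary $\mathcal{B}_\varepsilon$-neighborhood otherwise. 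This is exactly the selection trick used in the proof of Lemma \ref{NetAppHT}(i).

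\textbf{Step 2 (middle link).} Apply $u_{\bullet\bullet}\rightarrowtail v_{\bullet\bullet}$ to this selection. This gives an index $(i^*,j^*)$ such that for all $(i,j)\ge(i^*,j^*)$, the set $O_\varepsilon(v_{ij})$ contains a tail of $u_{\bullet\bullet}$. Given $j\ge j^*$, choose $i\ge\max(i^*,i_j)$, which is possible since $I$ is directed. Then $O_\varepsilon(v'_j)$ contains a tail $\{u_{\alpha\beta}\}_{(\alpha,\beta)\ge(\alpha_j,\beta_j)}$.

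\textbf{Step 3 (left-hand link, the main obstacle).} We must pass from ``$O_\varepsilon(v'_j)$ contains a tail of $u_{\bullet\bullet}$'' to ``$O_\varepsilon(v'_j)$ contains a tail of $u'_\bullet$''. For $\beta\ge\beta_j$, the column $\{u_{\alpha\beta}\}_{\alpha\ge\alpha_j}$ lies in the open set $O_\varepsilon(v'_j)$ and converges to $u'_\beta$. In general this only places $u'_\beta$ in the closure of $O_\varepsilon(v'_j)$. To get the point itself inside, I would first try to read the hypothesis $u_{\bullet\beta}\rightarrowtail u'_\beta$ in the reversed, locally symmetric sense $\{u'_\beta\}\rightarrowtail u_{\bullet\beta}$, since in the $lsb$ setting these are equivalent by Remark \ref{EqvNetRmk}(3). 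Taking the homogeneous selection that assigns $O_\varepsilon(v'_j)$ to the points $u_{\alpha\beta}$ with $\alpha\ge\alpha_j$, $\beta\ge\beta_j$, this reversed approach yields $u'_\beta\in O_\varepsilon(v'_j)$ for all $\beta\ge\beta_j$. That is, $O_\varepsilon(v'_j)$ contains a tail of $u'_\bullet$ for every $j\ge j^*$, which is precisely $u'_\bullet\rightarrowtail v'_\bullet$.

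If the plain base-space case is intended, without symmetry, I would instead look for a way to refine the selection in Step 1. One option is to use base elements $O_\varepsilon(v'_j)$ whose closures are controlled. Another is to interpret the marginal hypothesis via the footnote's ``primitive'' axiom, so that the limit points are captured in the same $\mathcal{B}_\varepsilon$-sets. This is where I expect the real difficulty to lie.
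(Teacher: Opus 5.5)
Your Steps 1 and 2 follow the same chain as the paper's proof, which is phrased by contradiction. First, $v_{\bullet j}\rightarrowtail v'_j$ puts a tail of the column $v_{\bullet j}$ inside $O_{\varepsilon}(v'_j)$; then $u_{\bullet\bullet}\rightarrowtail v_{\bullet\bullet}$ puts a tail of $u_{\bullet\bullet}$ inside it. Your explicit selection $O_\varepsilon(v_{ij}):=O_\varepsilon(v'_j)$ for $i\ge i_j$, and your choice of $i$ above both $i^*$ and $i_j$, make precise what the paper leaves implicit. At Step 3 the paper asserts that, because a subnet of $u'_\bullet$ lies outside $O=O_{\varepsilon_o}(v'_{\psi_o(j)})$, the convergences $u_{\bullet\beta}\rightarrowtail u'_\beta$ force a subnet of $u_{\bullet\bullet}$ outside $O$. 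That is exactly the inference you refused to make, and it is invalid. Convergence transfers membership in an open set from the limit to the net, not non-membership. Columns lying inside $O$ may therefore converge to points of $\overline{O}\setminus O$.

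So your proposal does not prove the lemma as stated for arbitrary base spaces, but no argument can, because it is false there.
\begin{itemize}
\item Setup: take $\mathbb{R}$ with the $K$-topology, where $K=\{1/n:n\ge 1\}$. Its base $\mathcal{B}$ consists of all $(a,b)$ and $(a,b)\setminus K$, graded for example by $b-a$.
\item The nets: let $v_{ij}=v'_j=0$ and $u'_\beta=1/\beta$. For $\alpha\ge 1$ and $\beta\ge 2$, let $u_{\alpha\beta}=\frac{1}{\beta}+\frac{1}{\alpha+1}\bigl(\frac{1}{\beta-1}-\frac{1}{\beta}\bigr)$.
\item First hypothesis: since $u_{\alpha\beta}\in(1/\beta,1/(\beta-1))$ misses $K$, every $\mathcal{B}$-neighborhood of $0$ contains a tail of $u_{\bullet\bullet}$. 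Hence every selected $O_\varepsilon(v_{ij})\ni 0$ does, i.e., $u_{\bullet\bullet}\rightarrowtail v_{\bullet\bullet}$.
\item Second hypothesis: each column converges to $1/\beta$, because the members of $\mathcal{B}$ containing $1/\beta$ are ordinary intervals.
\item Third hypothesis: $v_{\bullet j}\rightarrowtail v'_j$ trivially.
\item Failure of the conclusion: $(-1,1)\setminus K\in\mathcal{B}$ contains $v'_j=0$ for every $j$ but contains no $u'_\beta$. Using it as a constant selection shows $u'_\bullet\not\rightarrowtail v'_\bullet$.
\end{itemize}
This also refutes the marginal-convergence axiom claimed in Theorem \ref{NtAppSpThm} for general base spaces.

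Your local-symmetry argument for Step 3 is correct. What you have is therefore a valid proof of the lemma for $lsb$-spaces, which covers its later uses in Theorems \ref{tUConThm}, \ref{BSpCmplThm} and \ref{ContUlmThm}. It helps to see why it works. Take $B\in\mathcal{B}_\varepsilon$ and a net in $B$ converging to some $b\in\overline{B}$. Applying local symmetry with the constant selection $B$ gives $b\in B$. So in an $lsb$-space every member of $\mathcal{B}$ is closed, and that is precisely what the paper's last step silently needs.
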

\begin{comment}
\bc\bt
B^{k+1}\ar[rrrr,dashed,"\widetilde{F}"] &&&& \mathbb{R}\ar[d,"p(t)=e^{it}"]\\
S^k\ar[u,hook,"i"]\ar[urrrr,dashed,near start,"\widetilde{f}"]\ar[rrrr,near end,"f"] &&&& S^1
\et\ec
\end{comment}
\begin{center}
\begin{tikzcd}
u_{\bullet\bullet}\ar[d,tail] & \forall\beta,~~~u_{\bullet\beta}\ar[rr,tail]  && u'_\beta  & u'_\bullet \ar[d,dashed,tail]\\
v_{\bullet\bullet}            & \forall j,~~~v_{\bullet j}   \ar[rr,tail]  && v'_j & v'_\bullet
\end{tikzcd}
\end{center}
\begin{proof}
Assume $u_{\bullet\bullet}\rightarrowtail v_{\bullet\bullet}$ (i.e., $\forall v_{ij}\in O_\varepsilon(v_{ij})\in\mathcal{B}_\varepsilon$, $\exists$ $(i^{\mathcal{O}_\varepsilon(v_{\bullet\bullet})},j^{\mathcal{O}_\varepsilon(v_{\bullet\bullet})})$ such that $\forall(i,j)\geq (i^{\mathcal{O}_\varepsilon(v_{\bullet\bullet})},j^{\mathcal{O}_\varepsilon(v_{\bullet\bullet})})$, $O_\varepsilon(v_{ij})$ contains a tail of $u_{\bullet\bullet}$, where $\mathcal{O}_\varepsilon(v_{\bullet\bullet}):=\{O_\varepsilon(v_{ij}):(i,j)\in I\times J\}$
),

$u_{\bullet\beta}\rightarrowtail u'_\beta$ for each $\beta$ (i.e., every neighborhood $u'_{\beta}\in O_\varepsilon(u'_\beta)\in\mathcal{B}_\varepsilon$ contains a tail of  $u_{\bullet\beta}$), and

$v_{\bullet j}\rightarrowtail v'_j$ for each $j$ (i.e., every neighborhood $v'_j\in O_\varepsilon(v'_j)\in\mathcal{B}_\varepsilon$ contains a tail of  $v_{\bullet j}$).

Suppose $u'_\bullet\not\rightarrowtail v'_\bullet$. That is, there exists $\mathcal{O}_{\varepsilon_o}(v'_\bullet\circ\psi_o)\subset\mathcal{B}_{\varepsilon_o}$ (for some $\varepsilon_o\in\mathcal{E}$ and some subnet $v'_\bullet\circ\psi_o:J\rightarrow X$) such that each $O_{\varepsilon_o}(v'_\bullet\circ\psi_o(j))$ excludes a subnet of $u'_\bullet$.

Fix $j\in J$ such that $\psi_o(j)\geq j^{\mathcal{O}_{\varepsilon_o}(v_{\bullet\bullet})}$. Then the convergences $\{v_{\bullet j}\rightarrowtail v'_j\}_{\forall j\in J}$ imply $O_{\varepsilon_o}(v'_\bullet\circ\psi_o(j))$ contains a tail of $v_{\bullet \psi_o(j)}$, and so by $u_{\bullet\bullet}\rightarrowtail v_{\bullet\bullet}$, contains a tail of $u_{\bullet\bullet}$.  Since a subnet of $u'_\bullet$ lies outside of $O_{\varepsilon_o}(v'_\bullet\circ\psi_o(j))$, by the convergences $\{u_{\bullet\beta}\rightarrowtail u'_\beta\}_{\forall\beta}$, a subnet of $u_{\bullet\bullet}$ also lies outside of $O_{\varepsilon_o}(v'_\bullet\circ\psi_o(j))$ (a contradiction).
\end{proof}

\begin{thm}[{A base space is a net-approach space}]\label{NtAppSpThm}
In any base space, approach of nets (Definition \ref{ApprchDfn}) satisfies Axioms (\ref{NetAppAx1})-(\ref{NetAppAx4}) of Definition \ref{NtApSumrDfn}.
\end{thm}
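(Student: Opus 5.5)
The proof is essentially an assembly of results already established, so the plan is to check the four axioms of Definition \ref{NtApSumrDfn} one at a time. For the sufficient set of nets $N(X)$ we take all nets in $X$, or any family sufficient in the sense of \cite{ObrienEtal2021,OBrien2021}. The statements below remain true on such a subclass, because each verification only produces subnets or marginal limits of the nets already given.

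For Axiom (\ref{NetAppAx1}), restriction to convergence on constant nets, we unwind Definition \ref{ApprchDfn} with $v=\{x\}$ the constant net on a one-point directed set. A homogeneous selection $\mathcal{O}_\varepsilon(\{x\})$ is then just a single $\mathcal{B}_\varepsilon$-neighborhood $O_\varepsilon(x)$ of $x$. So $u\rightarrowtail\{x\}$ says that every $\mathcal{B}$-neighborhood of $x$ contains a tail of $u$. Since $\mathcal{B}$ is a base of $\tau$ and $\bigcup_\varepsilon\mathcal{B}_\varepsilon=\mathcal{B}$, this is equivalent to $u\to x$ in $\tau$, which is exactly Definition \ref{CovNetDfn}. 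The only point to check is the case where $x$ is viewed as a constant net on an arbitrary directed set $J$. There we must allow the selection $O_\varepsilon(v_j)$ to vary with $j$, while all the sets contain $x$. Requiring that every set in the selection, from some index on, contains a tail of $u$ is still implied by $u\to x$, and it implies $u\to x$ by taking constant selections. So this case agrees as well.

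Axioms (\ref{NetAppAx2}) and (\ref{NetAppAx3}) are exactly Lemma \ref{NetAppHT}(ii) and (i). Heredity holds because a tail of $u$ contained in $O_\varepsilon(v_j)$ contains a tail of any subnet $u\circ\phi$, by cofinality of $\phi$. Transitivity is Lemma \ref{NetAppHT}(i). Axiom (\ref{NetAppAx4}) is precisely Lemma \ref{BcnvLimit}, whose hypotheses and conclusion match the axiom verbatim.

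The only genuinely delicate step is Axiom (\ref{NetAppAx4}), and more precisely the correctness of the argument of Lemma \ref{BcnvLimit}. That argument combines three pieces: the witness subnet $v'_\bullet\circ\psi_o$ for non-approach, the tail of $v_{\bullet\psi_o(j)}$ inside $O_{\varepsilon_o}(v'_{\psi_o(j)})$, and the transfer through $u_{\bullet\bullet}\rightarrowtail v_{\bullet\bullet}$. The transfer needs a homogeneous selection $\mathcal{O}_{\varepsilon_o}(v_{\bullet\bullet})$ in which the entries $v_{i,\psi_o(j)}$ of that tail are assigned the neighborhood $O_{\varepsilon_o}(v'_{\psi_o(j)})$. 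We will make this choice explicitly: use that neighborhood for indices in the tail, and arbitrary $\mathcal{B}_{\varepsilon_o}$-neighborhoods elsewhere. Since the lemma is stated earlier, we invoke it directly and need only note this choice. With all four axioms verified, the theorem follows.
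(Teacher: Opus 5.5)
Your plan is the paper's own proof. Axiom (\ref{NetAppAx1}) comes from unwinding Definition \ref{ApprchDfn}, Axioms (\ref{NetAppAx2})--(\ref{NetAppAx3}) from Lemma \ref{NetAppHT}, and Axiom (\ref{NetAppAx4}) from Lemma \ref{BcnvLimit}. Your checks of the first three axioms are fine. The fourth has a genuine gap: you flagged the right lemma but not its actual flaw, which the paper's proof of Lemma \ref{BcnvLimit} shares.

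\textbf{The invalid step.} Your selection has to be made for all columns $j$ at once, since the threshold from $u_{\bullet\bullet}\rightarrowtail v_{\bullet\bullet}$ depends on the whole selection. With that choice, you obtain that $O:=O_{\varepsilon_o}(v'_{\psi_o(j)})$ contains a tail $\{u_{\alpha\beta}:\alpha\geq\alpha_0,\ \beta\geq\beta_0\}$ of $u_{\bullet\bullet}$. From $u_{\bullet\beta}\rightarrow u'_\beta$ this only yields $u'_\beta\in\overline{O}$ for $\beta\geq\beta_0$. The closing step, ``a subnet of $u'_\bullet$ lies outside $O$, hence so does a subnet of $u_{\bullet\bullet}$'', is invalid. $O$ is open, and a net lying in $O$ can converge to a point of $\overline{O}\setminus O$.

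\textbf{Axiom (\ref{NetAppAx4}) is false in general.} The step cannot be patched, even when $\tau$ is metrizable. On $\mathbb{R}$ with its usual topology, let $\mathcal{B}_n$ ($n\geq 1$) consist of $(0,\infty)$ together with all intervals $(a,b)$ with $b\leq n$ and $b-a<1/n$. Each $\mathcal{B}_n$ is an open cover, and $\bigcup_n\mathcal{B}_n$ is a base. For $y\geq n$, the only $\mathcal{B}_n$-neighborhood of $y$ is $(0,\infty)$. (If you insist on disjoint grades, use $(0,\infty)\cup(-n-1,-n)$ and lengths in $[1/(n+1),1/n)$ instead; nothing below changes.)

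On $\mathbb{N}\times\mathbb{N}$ put $u_{\alpha\beta}:=1/\alpha$ and $v_{ij}:=j$.
\begin{itemize}
\item $u_{\bullet\bullet}\rightarrowtail v_{\bullet\bullet}$ holds: for $j\geq n$, every $\mathcal{B}_n$-neighborhood of $v_{ij}$ is $(0,\infty)$, which contains all of $u_{\bullet\bullet}$.
\item The marginal limits are $u_{\bullet\beta}\rightarrow u'_\beta:=0$ and $v_{\bullet j}\rightarrow v'_j:=j$.
\item Yet $u'_\bullet\not\rightarrowtail v'_\bullet$: for $j\geq n$, the $\mathcal{B}_n$-neighborhood $(0,\infty)$ of $v'_j$ never contains $0$.
\end{itemize}

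\textbf{What would fix it.} The theorem as stated needs an extra hypothesis tying the grading to closures. One that suffices: for each $\varepsilon$ there is $\varepsilon'$ such that whenever $y\in O\in\mathcal{B}_\varepsilon$, there is $O'\in\mathcal{B}_{\varepsilon'}$ with $y\in O'\subset\overline{O'}\subset O$. Under this hypothesis, for large $i$ you have $v_{ij}\in O_\varepsilon(v'_j)$, so you can select $O'_{ij}\ni v_{ij}$ of grade $\varepsilon'$ with $\overline{O'_{ij}}\subset O_\varepsilon(v'_j)$. A tail of $u_{\bullet\bullet}$ inside $O'_{ij}$ then forces $u'_\beta\in\overline{O'_{ij}}\subset O_\varepsilon(v'_j)$ for all large $\beta$, and your argument goes through.
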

\begin{proof}
Axiom (\ref{NetAppAx1}) is immediate by construction. Axioms (\ref{NetAppAx2})-(\ref{NetAppAx4}) follow from Lemmas \ref{NetAppHT} and \ref{BcnvLimit}.
\end{proof}

\begin{lmm}[{Tail exclusion lemma}]\label{TailExcLmm}
Let $X=(X,\tau,\mathcal{B})$ be a base space, $u:I\rightarrow X$ a cauchy net, and $x\in X$. Then $u\not\rightarrowtail\{x\}$ iff $x$ has a $\mathcal{B}$-neighborhood that excludes a tail of $u$.
\end{lmm}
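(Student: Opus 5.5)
The backward implication is immediate, so almost all the work is in the forward implication. I read ``a $\mathcal{B}$-neighborhood $O$ of $x$ excludes a tail of $u$'' as: some tail $u_{[i_0,I]}$ is disjoint from $O$. Its negation is that $u$ is frequently in $O$, i.e. $O$ meets every tail of $u$.

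\emph{($\Leftarrow$)} Suppose $O\ni x$ is a $\mathcal{B}$-neighborhood disjoint from a tail $u_{[i_0,I]}$. Every tail of $u$ meets $u_{[i_0,I]}$, because $I$ is directed. Hence no tail of $u$ lies inside $O$. By Definition \ref{CovNetDfn}, $u\not\rightarrow x$, i.e. $u\not\rightarrowtail\{x\}$. This direction does not use cauchyness.

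\emph{($\Rightarrow$)} I argue by contraposition. Assume every $\mathcal{B}$-neighborhood of $x$ meets every tail of $u$. Then every $\tau$-neighborhood $N$ of $x$ also meets every tail, since $N$ contains a finite intersection of $\mathcal{B}$-neighborhoods of $x$.

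\begin{itemize}
\item \textbf{Step 1 (build a subnet).} Let $\mathcal{N}_x$ be the set of $\tau$-neighborhoods of $x$, directed by reverse inclusion. Let $K:=\{(i,N)\in I\times\mathcal{N}_x\}$ with the product preorder. By the standing assumption, for each $(i,N)$ choose $\phi(i,N)\ge i$ with $u_{\phi(i,N)}\in N$.
\item \textbf{Step 2 (check it is a subnet).} The map $\phi$ is cofinal: if $(k,N)\ge(i,X)$ then $\phi(k,N)\ge k\ge i$.
\item \textbf{Step 3 (the subnet converges).} Given a $\mathcal{B}$-neighborhood $O$ of $x$, every $(k,N)\ge(i,O)$ has $u_{\phi(k,N)}\in N\subset O$. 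So $u\circ\phi\rightarrowtail\{x\}$.
\item \textbf{Step 4 (upgrade to $u$).} Since $u$ is cauchy, any two of its subnets approach each other. Apply this to the trivial subnet $u=u\circ\mathrm{id}$ and to $u\circ\phi$, giving $u\rightarrowtail u\circ\phi$. Transitivity (Lemma \ref{NetAppHT}(i)) with $u\circ\phi\rightarrowtail\{x\}$ then gives $u\rightarrowtail\{x\}$, which is the contrapositive.
\end{itemize}

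The only point needing care is Step 4. The earlier lemma ``a cauchy net converges iff one subnet converges'' was stated for $lsb$-spaces, but the present lemma is for arbitrary base spaces. I avoid that lemma by using only the cauchy condition applied to the identity subnet, together with transitivity from Lemma \ref{NetAppHT}, which holds in every base space. The other point is that $\mathcal{B}$-neighborhoods of $x$ need not be directed. That is why the subnet is indexed by the $\tau$-neighborhoods of $x$, using that $\mathcal{B}$ is a (sub)base.
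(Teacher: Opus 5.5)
Your argument is correct when $\mathcal{B}$ is a base, and it takes a genuinely different route from the paper's.

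The paper works directly with the neighborhood that witnesses non-convergence. If $u\not\rightarrowtail\{x\}$, some $\mathcal{B}$-neighborhood $O\ni x$ excludes a subnet $u\circ\phi$. If $O$ did not also exclude a tail, $u$ would be frequently in $O$, giving a subnet $u\circ\psi$ inside $O$. Testing the cauchy condition $u\circ\phi\rightarrowtail u\circ\psi$ against a homogeneous selection of $\mathcal{B}_\varepsilon$-neighborhoods lying in $O$ (most simply the constant selection $O$ itself, with $O\in\mathcal{B}_\varepsilon$) forces $O$ to contain a tail of $u\circ\phi$, a contradiction.

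You instead run the classical ``a cauchy net clustering at $x$ converges to $x$'' argument. You contrapose, build a subnet converging to $x$, and transfer convergence to $u$ via $u\rightarrowtail u\circ\phi$ and transitivity (Lemma~\ref{NetAppHT}); you are right to avoid the $lsb$-only lemma. Your route is modular and familiar, but it costs a choice-based subnet construction plus transitivity, and it only produces \emph{some} neighborhood excluding a tail. The paper's route uses nothing beyond the definition of approach. It shows the \emph{same} $O$ excludes a tail, i.e.\ a cauchy net is eventually inside or eventually outside every $\mathcal{B}$-open set, which is what Corollary~\ref{TailExcCrl} records.

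One caveat: your claim that the argument also handles a subbase (which Definition~\ref{LocSpDfn} literally permits) is wrong. That each $\mathcal{B}$-neighborhood of $x$ meets every tail does not imply that a finite intersection $B_1\cap\cdots\cap B_n$ does. For example, a net alternating between a point of $B_1\setminus B_2$ and a point of $B_2\setminus B_1$ is frequently in $B_1$ and in $B_2$ but never in $B_1\cap B_2$. So for a subbase, $x$ need not be a $\tau$-cluster point, and your Step~1 is unjustified at the point where you invoke it, since cauchyness has not yet been used.

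The repair is the paper's dichotomy. Applied to a cauchy $u$, it shows that under your contrapositive hypothesis every $\mathcal{B}$-neighborhood of $x$ contains a tail. Then $u\rightarrowtail\{x\}$ at once, and no subnet is needed. Incidentally, when $\mathcal{B}$ is a base, the members of $\mathcal{B}$ containing $x$ are directed by reverse inclusion, so your remark about non-directedness only matters in the subbase case.
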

\begin{proof}
($\Rightarrow$): If $u\not\rightarrowtail \{x\}$, then a $\mathcal{B}$-neighborhood $O\ni x$ excludes a subnet $u\circ\phi$ of $u$. Suppose $O$ contains a subnet $u\circ\psi$ of $u$. Since $u\circ\phi\rightarrowtail u\circ\psi$, for any $u_{\psi_\alpha}\in O_\varepsilon(u_{\psi_\alpha})\in\mathcal{B}_\varepsilon$ with $O_\varepsilon(u_{\psi_\alpha})\subset O$, there exists $\alpha^{\mathcal{O}_\varepsilon(u\circ\psi)}$ (where $\mathcal{O}_\varepsilon(u\circ\psi):=\{O_\varepsilon(u_{\psi_\alpha}):\forall\alpha\}$) such that for any $\alpha\geq \alpha^{\mathcal{O}_\varepsilon(u\circ\psi)}$, $O_\varepsilon(u_{\psi_\alpha})\subset O$ contains a tail of $u\circ\phi$ (a contradiction). ($\Leftarrow$): The converse is immediate.
\end{proof}

\begin{crl}\label{TailExcCrl}
Let $X=(X,\tau,\mathcal{B})$ be a base space and $u:I\rightarrow X$ a cauchy net. A $\mathcal{B}$-open set $O$ excludes a subnet of $u$ iff it excludes a tail of $u$.
\end{crl}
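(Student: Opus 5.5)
Corollary \ref{TailExcCrl} follows from Lemma \ref{TailExcLmm}, whose forward-direction argument in fact gives a statement about arbitrary $\mathcal{B}$-open sets, not just neighborhoods of a prescribed point. The reverse implication is trivial: if $O$ excludes a tail $u_{[i,I]}$, then this tail is itself a subnet of $u$ (the inclusion $[i,I]\hookrightarrow I$ is cofinal) lying outside $O$. So $O$ excludes a subnet of $u$.

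For the forward implication, suppose $O\in\mathcal{B}$ excludes a subnet $u\circ\phi$. Since $u$ either is eventually in $X\backslash O$ or is frequently in $O$ (the dichotomy noted at the end of Definition \ref{SubnetDfn}), it suffices to rule out the case that $O$ contains a subnet $u\circ\psi$. Assume it does. Since $O\in\mathcal{B}$, we have $O\in\mathcal{B}_{\varepsilon}$ for some $\varepsilon\in\mathcal{E}$. Make the homogeneous selection $\mathcal{O}_\varepsilon(u\circ\psi)$ with $O_\varepsilon(u_{\psi_\alpha}):=O$ for every $\alpha$; this is legitimate because each $u_{\psi_\alpha}\in O$. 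Cauchyness of $u$ gives $u\circ\phi\rightarrowtail u\circ\psi$. Hence, for all sufficiently large $\alpha$, the set $O_\varepsilon(u_{\psi_\alpha})=O$ contains a tail of $u\circ\phi$. This contradicts the assumption that $u\circ\phi$ lies entirely outside $O$. Therefore $u$ is eventually in $X\backslash O$, i.e., $O$ excludes a tail of $u$.

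The only delicate step is the choice of homogeneous selection. Choosing the constant selection equal to $O$ itself avoids the need to find $\mathcal{B}_\varepsilon$-sets inside $O$, as in the proof of Lemma \ref{TailExcLmm}. It works because $O$ is a member of some grading class $\mathcal{B}_\varepsilon$. Beyond that, one only checks that ``excludes a subnet'' means the whole subnet lies outside $O$, so that containing a tail of it is impossible.
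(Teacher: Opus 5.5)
Your proof is correct and takes the same route as the paper. The corollary is just the forward-direction argument in the proof of Lemma \ref{TailExcLmm}, applied to an arbitrary $\mathcal{B}$-open set $O$: assume $O$ also contains a subnet $u\circ\psi$, apply $u\circ\phi\rightarrowtail u\circ\psi$ to a homogeneous selection of $\mathcal{B}_\varepsilon$-sets inside $O$, and reach a contradiction. Your constant selection $O_\varepsilon(u_{\psi_\alpha}):=O$, justified by $O\in\mathcal{B}_\varepsilon$ for some $\varepsilon$, makes explicit that the selection with $O_\varepsilon(u_{\psi_\alpha})\subset O$, which the paper uses without justification, actually exists.
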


\begin{crl}\label{TailExcCrlII}
Let $X=(X,\tau,\mathcal{B})$ be a base space, $u:I\rightarrow X$ a cauchy net, and $v:J\rightarrow X$ a net. Then $u\not\rightarrowtail v$ iff there exist $\varepsilon_o$ and $\mathcal{O}_{\varepsilon_o}(v)\subset\mathcal{B}_{\varepsilon_o}$ such that for any $j\in J$, there exists $j_o(j)\geq j$ such that $O_{\varepsilon_o}(v_{j_o(j)})$ excludes a tail of $u$. Equivalently, $u\not\rightarrowtail v$ iff there exists $\mathcal{O}_{\varepsilon_o}(v\circ\psi_o)\subset\mathcal{B}_{\varepsilon_o}$ (for some $\varepsilon_o\in\mathcal{E}$ and some subnet $v\circ\psi_o:J\rightarrow X$) such that each $O_{\varepsilon_o}(v\circ\psi_o(j))$ excludes a tail of $u$.
\end{crl}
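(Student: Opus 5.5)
The plan is to derive Corollary \ref{TailExcCrlII} directly from the negation of approach in Definition \ref{ApprchDfn}, using Corollary \ref{TailExcCrl} to upgrade ``excludes a subnet of $u$'' to ``excludes a tail of $u$''. Since $u$ is cauchy and every $O_{\varepsilon_o}(v_j)$ is a $\mathcal{B}$-open set, Corollary \ref{TailExcCrl} says that for each fixed $j$ the two exclusion conditions are equivalent. Both characterizations in the statement are then obtained by substituting one condition for the other in the two equivalent forms of $u\not\rightarrowtail v$ already recorded in Definition \ref{ApprchDfn}.

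For the first form, I would start with the definition: $u\not\rightarrowtail v$ iff there exist $\varepsilon_o$ and a homogeneous selection $\mathcal{O}_{\varepsilon_o}(v)\subset\mathcal{B}_{\varepsilon_o}$ such that the set of $j$ for which $O_{\varepsilon_o}(v_j)$ fails to contain a tail of $u$ is cofinal in $J$. For any net and any set $A$, ``$A$ does not contain a tail of $u$'' is the same as ``$u$ is frequently in $X\backslash A$'', that is, ``$A$ excludes a subnet of $u$''. Applying Corollary \ref{TailExcCrl} to each $A=O_{\varepsilon_o}(v_{j_o(j)})$ replaces this with ``$A$ excludes a tail of $u$''. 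This proves ($\Rightarrow$). For ($\Leftarrow$), note that a set excluding a tail of $u$ certainly does not contain a tail of $u$, because two tails of a net on a directed set meet. So this direction does not even need cauchyness.

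For the second form, I would pass between the cofinal choice $j\mapsto j_o(j)$ and a subnet $v\circ\psi_o$. Given $j_o(j)\geq j$ for every $j$, the map $\psi_o:=j_o:J\rightarrow J$ need not be monotone, so it is not obviously cofinal in the sense of Definition \ref{SubnetDfn}. To fix this, I would reindex by the directed set $J\times J$, or more simply by the cofinal subset $C=\{j_o(j)\}$ with its induced order, restrict the selection to $C$, and then apply the tail-exclusion upgrade. Conversely, given a subnet $v\circ\psi_o$ and sets $O_{\varepsilon_o}(v_{\psi_o(j)})$ each excluding a tail of $u$, I would extend the selection arbitrarily to all of $v$ (each $\mathcal{B}_{\varepsilon_o}$ is a cover). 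Cofinality of $\psi_o$ then gives, for every $j$, some index $\psi_o(k)\geq j$ at which exclusion holds, which is the first form.

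I expect the only delicate point to be this reindexing, namely making the cofinal selection into a genuine subnet and handling overlapping indices when the selection is defined along $\psi_o$. The topological content is entirely in Corollary \ref{TailExcCrl}, so everything else is bookkeeping.
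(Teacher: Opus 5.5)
Your proposal is correct and follows exactly the argument the paper intends (the paper states this as an immediate corollary with no separate proof): substitute \emph{excludes a tail} for \emph{excludes a subnet} in the negation of approach from Definition \ref{ApprchDfn}, applying Corollary \ref{TailExcCrl} to the $\mathcal{B}_{\varepsilon_o}$-sets $O_{\varepsilon_o}(v_j)$. The reindexing you flag as delicate is unnecessary: $j_o(k)\geq k\geq j$ for all $k\geq j$ already makes $\psi_o:=j_o:J\rightarrow J$ a cofinal map in the sense of Definition \ref{SubnetDfn}, which requires no monotonicity, and this also gives the subnet on the domain $J$ exactly as the statement has it.
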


\begin{lmm}\label{NFBscsLmm}
Let $X$ be a base space and $u,v$ nets in $X$. (i) If $u$ is a derived net of $\mathcal{F}_v$, then $\mathcal{F}_u\supset\mathcal{F}_v$ (so, $\mathcal{F}_u=\mathcal{F}_{v\circ\phi}$ for a subnet $v\circ\phi$). (ii) $u\rightarrowtail v$ $\Rightarrow$ $\mathcal{F}_u\supset\mathcal{F}_v$ (so, $u\sim v$ $\Rightarrow$ $\mathcal{F}_u=\mathcal{F}_v$). (iii) If $v$ is cauchy (i.e., $v\sim v$), then $u\rightarrowtail v$ $\iff$ $\mathcal{F}_u\supset\mathcal{F}_v$ (so, if $u,v$ are cauchy, then $u\sim v$ $\iff$ $\mathcal{F}_u=\mathcal{F}_v$).
\end{lmm}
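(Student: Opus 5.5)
The three parts go in order (i), (ii), (iii), and each is obtained by translating approach into statements about tails lying inside $\mathcal{B}$-sets.

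For (i), let $u$ be a derived net of $\mathcal{F}_v$, say $u:(\mathcal{F}_v,\supset)\rightarrow X$ with $u(F)\in F$. Take $A\in\mathcal{F}_v$, so some open $O\subset A$ contains a tail $v_{[j,J]}$ of $v$. Put $F_0:=v([j,J])\in\mathcal{F}_v$. For every $F\subset F_0$ in $\mathcal{F}_v$ we have $u(F)\in F\subset O$, so the tail of $u$ at $F_0$ lies in $O$, and therefore $A\in\mathcal{F}_u$. The parenthetical remark is the standard net/filter correspondence: a net whose derived filter refines $\mathcal{F}_v$ has the same derived filter as some subnet of $v$. This can be cited from \cite{Bartle1955} or the $AA$-subnet footnote; I will not reprove it.

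For (ii), assume $u\rightarrowtail v$ and take $A\in\mathcal{F}_v$, so $T_v\subset O\subset A$ for some open $O$. Since $\mathcal{B}$ is a base, each point $v_j$ of that tail has a basic set $O_\varepsilon(v_j)\in\mathcal{B}_\varepsilon$ with $v_j\in O_\varepsilon(v_j)\subset O$. The catch is that the grade $\varepsilon$ must be the same for every $j$, and in general that may fail. There are two possible routes:
\begin{enumerate}
\item If every $\mathcal{B}_\varepsilon$ is itself a base, or some grade refines $O$ along the tail, the argument goes through directly.
\item Otherwise, use transitivity (Lemma \ref{NetAppHT}) together with the constant nets $\{v_j\}$: approach with $\{v_j\}$ reduces to convergence, and convergence uses all $\mathcal{B}$-neighborhoods.
\end{enumerate}
The route I will commit to is the second one, applied per point. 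The selection $\mathcal{O}_\varepsilon(v)$ may be arbitrary, and the approach condition quantifies over all grades, so I need only one admissible grade that works cofinally. Once such a grade is chosen, $u\rightarrowtail v$ gives a tail of $u$ inside $O_\varepsilon(v_j)\subset O\subset A$. Hence $A\in\mathcal{F}_u$. The equality $\mathcal{F}_u=\mathcal{F}_v$ for $u\sim v$ then follows by symmetry.

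For (iii), the converse direction is the new content. Assume $v$ is cauchy and $\mathcal{F}_u\supset\mathcal{F}_v$, and suppose $u\not\rightarrowtail v$. By Corollary \ref{TailExcCrlII}, applied with the roles suitably arranged, there are $\varepsilon_o$ and a selection $\mathcal{O}_{\varepsilon_o}(v\circ\psi_o)$ each of whose members excludes a subnet of $u$. Since $v$ is cauchy, $v\rightarrowtail v\circ\psi_o$, so for large $j$ the set $O:=O_{\varepsilon_o}(v\psi_o(j))$ contains a tail of $v$. Then $O\in\mathcal{F}_v\subset\mathcal{F}_u$, so $O$ contains a tail of $u$. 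But $O$ was chosen to exclude a subnet of $u$, and no set can both contain a tail of $u$ and miss a subnet of it. This contradiction gives $u\rightarrowtail v$. The statement for $u\sim v$ follows by applying this both ways, using (ii) for the forward direction.

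The main obstacle is the grade-uniformity issue in (ii): an arbitrary open $O\in\tau$ need not contain a homogeneous family of basic sets around a tail of $v$. If this cannot be resolved, I would restrict (ii) to the derived $\mathcal{B}$-filter $\mathcal{F}^{\mathcal{B}}_v$, or choose the grade pointwise via constant-net convergence.
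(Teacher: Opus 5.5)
Your treatment of (ii) has a genuine gap, located exactly where you flagged it, and the route you commit to does not close it. The sentence ``I need only one admissible grade that works cofinally'' asserts such a grade without producing one. Constant-net convergence only gives each $v_j$ some basic neighborhood inside $O$, in a grade depending on $j$. The approach condition, however, only tests homogeneous selections from a single $\mathcal{B}_\varepsilon$. Transitivity cannot help either, since nothing gives you $v\rightarrowtail\{v_j\}$.

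In fact no argument can close the gap, because (ii) is false for the paper's derived filter $\mathcal{F}_v=\mathcal{F}^\tau_v$ (and for the usual tail filter). Take $X=\mathbb{N}$ with the discrete topology, $\mathcal{E}=\mathbb{N}$, $\mathcal{B}_n=\{\{n\},\mathbb{N}\setminus\{n\}\}$, and let $v_j=2j$, $u_j=2j+1$.
\begin{itemize}
\item In grade $n$ the selection is forced: $O_n(v_j)=\mathbb{N}\setminus\{n\}$ once $2j>n$, and this set contains a tail of $u$. Hence $u\rightarrowtail v$, and symmetrically $v\rightarrowtail u$.
\item Both nets are cauchy. The space is even a Hausdorff $sb$-space: $w\rightarrowtail z$ iff, for each $n$, $w$ and $z$ are both eventually $=n$ or both eventually $\neq n$.
\item Yet the open set of even numbers lies in $\mathcal{F}_v$ and not in $\mathcal{F}_u$.
\end{itemize}
Each $v_j$ does have the basic neighborhood $\{v_j\}$ inside the evens, but only in grade $v_j$. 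This is precisely the grade-uniformity failure you identified. The same example refutes the forward direction of (iii) and both ``so'' clauses, so your derivations of those from (ii) fail as well. The paper's one-line proof (definitions plus \cite[Theorem 4.6]{Freiwald2014}) does not confront this issue either.

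Your fallback is the correct repair. For the derived $\mathcal{B}$-filter, suppose $B\in\mathcal{B}_\varepsilon$ contains a tail of $v$. Use the selection $O_\varepsilon(v_j):=B$ on that tail, chosen arbitrarily off it. Approach then immediately gives that $B$ contains a tail of $u$, so $\mathcal{F}^{\mathcal{B}}_u\supset\mathcal{F}^{\mathcal{B}}_v$.

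Your converse argument in (iii) is correct as written, for either filter. However, drop the appeal to Corollary~\ref{TailExcCrlII}. That corollary assumes the approaching net $u$ is cauchy, whereas you only use the plain negation of the definition (``excludes a subnet'').

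Two smaller points in (i):
\begin{itemize}
\item $F_0=v([j,J])$ need not belong to $\mathcal{F}_v$, because the paper's derived filter is generated by open sets; for instance, tails of $1/n$ in $\mathbb{R}$ contain no nonempty open set. Index at $F_0:=O$ instead.
\item The parenthetical you cite as standard holds for tail filters but fails for $\mathcal{F}^\tau$. In $\mathbb{R}$ with $v_n=1/n$, the open set $G=\mathbb{R}\setminus(\{0\}\cup\{1/n:n\geq 1\})$ meets every member of $\mathcal{F}_v$. A derived net with $w(F)\in F\cap G$ then has $G\in\mathcal{F}_w$, while no subnet of $v$ ever enters $G$.
\end{itemize}
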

\begin{proof}
This follows directly from the definitions and \cite[Theorem 4.6]{Freiwald2014}.
\end{proof}

\begin{lmm}\label{CchyFltLmm}
Let $X$ be a $csb$-space and $u$ a cauchy net in $X$. Then $\mathcal{F}_u$ is a cauchy filter in $X$.
\end{lmm}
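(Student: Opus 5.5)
To prove that $\mathcal{F}_u$ is a cauchy filter, I would show that every derived net $w:(\mathcal{F}_u,\supset)\rightarrow X$, $F\mapsto w(F)\in F$, is cauchy. Equivalently, by the footnote to Definition \ref{ApprchDfn}, it suffices to treat the associated net $w_{\mathcal{F}_u}$. The strategy is to show $w\sim u$ and then transfer cauchyness from $u$ to $w$ using transitivity (Lemma \ref{NetAppHT}) together with the cauchy symmetry of the $csb$-space.

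\textbf{Step 1: $w\rightarrowtail u$.} By Lemma \ref{NFBscsLmm}(i), $\mathcal{F}_w\supset\mathcal{F}_u$: every $F\in\mathcal{F}_u$ contains the tail $\{w(G):G\subset F\}$ of $w$. Since $u$ is cauchy ($u\sim u$), Lemma \ref{NFBscsLmm}(iii) then gives $w\rightarrowtail u$. For a direct check, fix a selection $\mathcal{O}_\varepsilon(u)\subset\mathcal{B}_\varepsilon$. Since $u\rightarrowtail u$, there is $i_0$ such that each $O_\varepsilon(u_i)$ with $i\geq i_0$ contains a tail of $u$. Any open set containing a tail of $u$ lies in $\mathcal{F}_u$, so it contains a tail of $w$.

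\textbf{Step 2: $u\rightarrowtail w$.} Here the $csb$ hypothesis is used: $u$ is cauchy and $w\rightarrowtail u$, so cauchy symmetry yields $u\rightarrowtail w$. Hence $w\sim u$.

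\textbf{Step 3: passing to subnets of $w$.} Let $w\circ\phi$ and $w\circ\psi$ be subnets of $w$. By heredity (Lemma \ref{NetAppHT}(ii)), $w\circ\phi\rightarrowtail u$. Combining $w\circ\phi\rightarrowtail u\rightarrowtail w$ with transitivity gives $w\circ\phi\rightarrowtail w$.

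The remaining need is $w\rightarrowtail w\circ\psi$, and this is the main obstacle, because the relation is hereditary only on the left. I would handle it by noting that $w\circ\psi$ is itself a derived-type net of $\mathcal{F}_u$: its derived filter contains $\mathcal{F}_w\supset\mathcal{F}_u$. Steps 1–2 then apply verbatim to $w\circ\psi$, giving $w\circ\psi\rightarrowtail u$ and, by cauchy symmetry, $u\rightarrowtail w\circ\psi$. Transitivity then yields
\[
w\circ\phi\rightarrowtail u\rightarrowtail w\circ\psi,
\]
so $w\circ\phi\rightarrowtail w\circ\psi$, and $w$ is cauchy.

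The fact doing the real work in Step 3 is that any net $t$ with $\mathcal{F}_t\supset\mathcal{F}_u$ satisfies $t\rightarrowtail u$. This is exactly Lemma \ref{NFBscsLmm}(iii) for cauchy $u$, so I expect it to go through. If it did not, I would fall back on the direct argument of Step 1 applied to $t$.
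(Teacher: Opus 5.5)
Your proposal is correct and is essentially the paper's proof. In both, $\mathcal{F}_w\supset\mathcal{F}_u$ together with $u\rightarrowtail u$ gives $w\rightarrowtail u$, and cauchy symmetry of the $csb$-space then gives $u\rightarrowtail w$.

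The only difference is the last step. The paper concludes that $w$ is cauchy directly from $w\rightarrowtail u\rightarrowtail w$, implicitly using the criterion $w\sim w$ of Remark \ref{EqvNetRmk(2)}; that criterion is valid because approach is in fact also hereditary in its second argument. Your Step 3 instead reruns Steps 1--2 for the subnet $w\circ\psi$, which is a valid and somewhat more explicit way to handle arbitrary pairs of subnets.
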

\begin{proof}
Let $v:(\mathcal{F}_u,\supset)\rightarrow X,~A\mapsto v(A)\in A$ be a derived net of $\mathcal{F}_u$. Since $\mathcal{F}_v\supset\mathcal{F}_u$, we have $v\rightarrowtail u$ due to $u\rightarrowtail u$ by cauchyness of $u$. Therefore $v$ is cauchy by cauchy symmetry (since $v\rightarrowtail u$ $\Rightarrow$ $u\rightarrowtail v$).
\end{proof}

\begin{lmm}\label{tUConLmm4}
Let $X$ be a Hausdorff $lsb$-space, $E\subset X$ a dense subset, and $x_\bullet\rightarrowtail\widetilde{x}_\bullet$ approaching nets in $X$. Then there exist approaching bi-nets $e_{\bullet\bullet}\rightarrowtail \widetilde{e}_{\bullet\bullet}$ in $E$ such that $e_{\bullet\alpha}\rightarrow x_\alpha$ $\forall\alpha$ and $\widetilde{e}_{\bullet\widetilde{\alpha}}\rightarrow \widetilde{x}_{\widetilde{\alpha}}$ $\forall\widetilde{\alpha}$.
\end{lmm}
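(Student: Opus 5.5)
The plan is to build the bi-nets from neighborhood-indexed selections, using density of $E$. I will index the first slot by neighborhoods of the points being approximated. For each $\alpha\in I$ (the domain of $x_\bullet$), let $\mathcal{N}_\alpha$ be the set of $\mathcal{B}$-neighborhoods of $x_\alpha$, directed by reverse inclusion. Refining to finite intersections where needed, we may use the derived-filter directed set of the neighborhood filter. By density of $E$, each $N\in\mathcal{N}_\alpha$ meets $E$, so I choose $e_{N\alpha}\in N\cap E$. Similarly, I choose $\widetilde{e}_{\widetilde N\widetilde{\alpha}}\in \widetilde N\cap E$ for $\widetilde N$ a neighborhood of $\widetilde{x}_{\widetilde{\alpha}}$. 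To make genuine bi-nets on a product directed set, I will use a single index set in the first slot, for example $\Lambda=$ the family of finite subsets of $\mathcal{B}$ ordered by inclusion. I then set $e_{\lambda\alpha}\in E\cap\bigcap\{B\in\lambda: x_\alpha\in B\}$, and if no member of $\lambda$ contains $x_\alpha$, I take the intersection to be any fixed basic neighborhood of $x_\alpha$. The net $\widetilde e$ is defined analogously. With this choice, $e_{\bullet\alpha}\to x_\alpha$ is immediate, since every basic neighborhood $B$ of $x_\alpha$ eventually lies in $\lambda$. The same argument gives $\widetilde e_{\bullet\widetilde\alpha}\to\widetilde x_{\widetilde\alpha}$.

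The remaining step is to show $e_{\bullet\bullet}\rightarrowtail\widetilde e_{\bullet\bullet}$, and I would do this through transitivity (Lemma \ref{NetAppHT}). I aim for the chain $e_{\bullet\bullet}\rightarrowtail x_\bullet\rightarrowtail\widetilde x_\bullet\rightarrowtail\widetilde e_{\bullet\bullet}$. Here $x_\bullet$ is regarded as a bi-net constant in the first slot, so that approach is compared on the same product index set; hereditarity lets me pass freely between a net and its reindexings. The first link should follow from local symmetry. Pointwise we have $e_{\bullet\alpha}\sim x_\alpha$ by Remark \ref{EqvNetRmk}(3). I would then argue that for a homogeneous selection $\mathcal{O}_\varepsilon(x_\bullet)$, the set $O_\varepsilon(x_\alpha)$ eventually contains $e_{\lambda\alpha}$ once $O_\varepsilon(x_\alpha)\in\lambda$. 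Uniformity in $\alpha$ comes from the fact that $\lambda$ ranges over all finite subsets, so a tail in the product order handles any single $\alpha$. The third link is symmetric, and uses the $lsb$ property to reverse $\widetilde e_{\bullet\widetilde\alpha}\rightarrowtail\widetilde x_{\widetilde\alpha}$ into $\{\widetilde x_{\widetilde\alpha}\}\rightarrowtail\widetilde e_{\bullet\widetilde\alpha}$.

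The main obstacle is this uniformity. Approach of bi-nets demands that a \emph{tail} of $e_{\bullet\bullet}$ be contained in each $O_\varepsilon(\widetilde x_{\widetilde\alpha})$ for $\widetilde\alpha$ large, whereas pointwise convergence only controls one column at a time. If the product-index trick is not enough, I would first try a diagonal reindexing. This means replacing the bi-net by the net indexed by pairs $(\alpha,\lambda)$ with the product order and choosing $e_{(\alpha,\lambda)}$ inside $\bigcap\lambda\cap O$ for the relevant basic sets. Alternatively, I would use Lemma \ref{BcnvLimit} in reverse spirit, as a guide for which tails must be matched. Hausdorffness will be used only to make limits unique, so that the columns' limits $x_\alpha,\widetilde x_{\widetilde\alpha}$ are determined by the bi-nets, which is what later applications such as Theorem \ref{tUConThm} need.
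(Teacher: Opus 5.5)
Your column construction is fine: each $\bigcap\{B\in\lambda:x_\alpha\in B\}$ is an open neighbourhood of $x_\alpha$, so it meets $E$, and every basic neighbourhood of $x_\alpha$ eventually lies in $\lambda$. But the step you flag as the main obstacle is the whole content of the lemma, and you do not prove it.

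Your justification of the first link is column-wise. By Definition \ref{ApprchDfn}, $e_{\bullet\bullet}\rightarrowtail x_\bullet$ needs each selected $O_\varepsilon(x_\alpha)$, for $(\lambda,\alpha)$ large, to contain a tail $\{e_{\lambda'\alpha'}:(\lambda',\alpha')\geq(\lambda_1,\alpha_1)\}$ of the whole bi-net. That tail includes $e_{\lambda_1\alpha'}$ for every $\alpha'\geq\alpha_1$. As written, your argument never uses $x_\bullet\rightarrowtail\widetilde x_\bullet$, so it proves too much. Take $\mathbb{N}$ graded by the single cover $\{\{n\}:n\in\mathbb{N}\}$ (a discrete Hausdorff $lsb$-space, with $E=\mathbb{N}$) and $x_n=n$. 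Your construction gives $e_{\lambda n}=n$ and your column-wise observation holds, yet no $\{n\}$ contains a tail of $e_{\bullet\bullet}$, so $e_{\bullet\bullet}\not\rightarrowtail x_\bullet$.

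The left-hand side can in fact be rescued by a feature of your construction that you do not use. Since $e_{\lambda\alpha}$ lies in every member of $\lambda$ containing $x_\alpha$, any $B\in\mathcal{B}$ containing $\{x_{\alpha'}:\alpha'\geq\alpha_1\}$ contains the tail of $e_{\bullet\bullet}$ at $(\{B\},\alpha_1)$. Hence $x_\bullet\rightarrowtail w$ implies $e_{\bullet\bullet}\rightarrowtail w$ for every net $w$. So $e_{\bullet\bullet}\rightarrowtail\widetilde x_\bullet$ follows directly from the hypothesis, and your first link is not needed.

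The genuine gap is on the right. To get $e_{\bullet\bullet}\rightarrowtail\widetilde e_{\bullet\bullet}$, you must show the following: for an arbitrary homogeneous selection $O_\varepsilon(\widetilde e_{\mu\widetilde\alpha})\ni\widetilde e_{\mu\widetilde\alpha}$, every such set with $(\mu,\widetilde\alpha)$ large contains a tail of $x_\bullet$ (which suffices, by the property just noted).
\begin{itemize}
\item Any tail of the index set contains $(\mu_0,\widetilde\alpha)$ for all $\widetilde\alpha\geq\widetilde\alpha_0$, with $\mu_0$ fixed and finite.
\item Local symmetry applied to the column $\widetilde e_{\bullet\widetilde\alpha}\rightarrow\widetilde x_{\widetilde\alpha}$ only yields $\widetilde x_{\widetilde\alpha}\in O_\varepsilon(\widetilde e_{\mu\widetilde\alpha})$ for $\mu$ beyond a threshold that depends on $\widetilde\alpha$.
\item Even then it gives only this one point, not a tail of $x_\bullet$ or of $\widetilde x_\bullet$.
\end{itemize}
Closing this would need something like a uniform local symmetry: a set $B\in\mathcal{B}$ containing a tail of $\widetilde x_\bullet$ such that every member of $\mathcal{B}_\varepsilon$ meeting $B$ contains a tail of $x_\bullet$. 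That is not part of the $lsb$ definition. Your fallbacks do not supply it; in particular, indexing by pairs $(\alpha,\lambda)$ with the product order is the same directed set.

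The paper handles the same cross-column coupling differently. It chooses the rows $e_\beta(x_\bullet)$ and $\widetilde e_{\widetilde\beta}(\widetilde x_\bullet)$ as derived nets of $\mathcal{F}_{x_\bullet}$ and $\mathcal{F}_{\widetilde x_\bullet}$ that approach $x_\bullet$ and $\widetilde x_\bullet$. It then invokes density of $E$ to arrange $e_\bullet(x_\bullet)\rightarrowtail\widetilde e_\bullet(\widetilde x_\bullet)$ directly, rather than through a transitivity chain. That final arrangement is asserted there without details, so you cannot simply borrow it; the uniformity still has to be argued.
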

\begin{proof}
Since $E$ is dense in $X$, for each $\alpha$ (resp., $\widetilde{\alpha}$), we can pick a convergent net $E\ni e_\bullet(x_\alpha)\rightarrowtail x_\alpha$ (resp., $E\ni \widetilde{e}_\bullet(\widetilde{x}_{\widetilde{\alpha}})\rightarrowtail \widetilde{x}_{\widetilde{\alpha}}$) with each $e_\beta(x_\bullet)$ (resp., $\widetilde{e}_{\widetilde{\beta}}(\widetilde{x}_\bullet)$) chosen to be a derived net of the filter $\mathcal{F}_{x_\bullet}$ (resp., $\mathcal{F}_{\widetilde{x}_\bullet}$). Again, using the density of $E$, we can choose these nets such that the following hold: (i) Each $e_\beta(x_\bullet)\rightarrowtail x_\bullet$ (resp.,
each $\widetilde{e}_{\widetilde{\beta}}(\widetilde{x}_\bullet)\rightarrowtail \widetilde{x}_\bullet$) and (ii) $e_\bullet(x_\bullet) \rightarrowtail \widetilde{e}_\bullet(\widetilde{x}_\bullet)$.
\end{proof}

\begin{thm}[{Uniform extension theorem}]\label{tUConThm}
Let $X$,$Y$ be Hausdorff $lsb$-spaces and $f : E\subset X\rightarrow Y$ a map. If (i) $f$ is uniform, (ii) $E$ is dense, and (iii) $Y$ is complete, then $f$ extends to a unique uniform map $F : X\rightarrow Y$.
\end{thm}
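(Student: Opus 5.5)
We define $F$ by limits along $E$ and check the required properties with the net-approach axioms already established.

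\textbf{Definition of $F$.} Fix $x\in X$. Since $E$ is dense, choose a net $e_\bullet$ in $E$ with $e_\bullet\rightarrow x$. In an $lsb$-space a convergent net is cauchy, so $e_\bullet$ is cauchy in $X$. By hereditariness (Lemma \ref{NetAppHT}(ii)), every subnet of $e_\bullet$ is a net in $E$, and any two of them approach each other in $X$. Hence $e_\bullet$ is cauchy in the subspace $E$. Since $f$ is uniform it is cauchy continuous, so $f\circ e_\bullet$ is cauchy in $Y$. As $Y$ is complete, $f\circ e_\bullet$ converges, and we set $F(x):=\lim f\circ e_\bullet$.

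\textbf{Well-definedness.} Let $e'_\bullet\rightarrow x$ be another net in $E$. By Remark \ref{EqvNetRmk}(3), $e_\bullet\sim\{x\}\sim e'_\bullet$, so $e_\bullet\sim e'_\bullet$ by transitivity. Uniformity of $f$ then gives $f\circ e_\bullet\sim f\circ e'_\bullet$. By Remark \ref{EqvNetRmk}(1) the two image nets have the same limits, and Hausdorffness of $Y$ makes that limit unique. For $x\in E$, the constant net $\{x\}$ is admissible, so $F(x)=f(x)$ and $F$ extends $f$.

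\textbf{Uniformity.} Let $x_\bullet\rightarrowtail\widetilde{x}_\bullet$ in $X$. Lemma \ref{tUConLmm4} gives bi-nets $e_{\bullet\bullet}\rightarrowtail\widetilde{e}_{\bullet\bullet}$ in $E$ with $e_{\bullet\alpha}\rightarrow x_\alpha$ and $\widetilde{e}_{\bullet\widetilde\alpha}\rightarrow\widetilde{x}_{\widetilde\alpha}$. Applying $f$, uniformity yields $f\circ e_{\bullet\bullet}\rightarrowtail f\circ\widetilde{e}_{\bullet\bullet}$. By the definition of $F$, $f\circ e_{\bullet\alpha}\rightarrow F(x_\alpha)$ and $f\circ\widetilde{e}_{\bullet\widetilde\alpha}\rightarrow F(\widetilde{x}_{\widetilde\alpha})$. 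Lemma \ref{BcnvLimit} (marginal convergence preserves approach) then gives $F\circ x_\bullet\rightarrowtail F\circ\widetilde{x}_\bullet$, so $F$ is uniform. The same argument with $\widetilde{x}_\bullet$ constant shows $F$ is continuous.

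\textbf{Uniqueness.} Suppose $G$ is another uniform extension and $x\in X$. Take $e_\bullet\rightarrow x$ in $E$. Uniform maps are continuous, since they map $u\rightarrowtail\{x\}$ to $G\circ u\rightarrowtail\{G(x)\}$. Hence $G\circ e_\bullet=f\circ e_\bullet\rightarrow G(x)$, and Hausdorffness of $Y$ forces $G(x)=F(x)$.

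\textbf{Main obstacle.} The uniformity step carries all the weight, since it relies on Lemma \ref{tUConLmm4}. The lemma asserts that the approximating bi-nets can be chosen compatibly with $x_\bullet\rightarrowtail\widetilde{x}_\bullet$, and this is exactly what transfers approach through density. If it proved delicate, I would fall back on choosing $e_{\beta\alpha}$ inside shrinking $\mathcal{B}_\varepsilon$-neighborhoods of $x_\alpha$, indexed by $(\varepsilon,\text{selection})$. I would then check $e_{\bullet\bullet}\rightarrowtail\widetilde{e}_{\bullet\bullet}$ directly via the $lsb$ property and transitivity (Lemma \ref{NetAppHT}).
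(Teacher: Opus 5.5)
Your proposal follows the paper's proof essentially step for step. As in the paper, $F$ is defined by limits along nets in $E$, uniformity of $F$ comes from Lemma \ref{tUConLmm4}, uniformity of $f$ and Lemma \ref{BcnvLimit}, and uniqueness comes from continuity and density with $Y$ Hausdorff. The one difference is that you justify why the defining limit exists and does not depend on the chosen net, using cauchy continuity of $f$, completeness of $Y$, and $e_\bullet\sim\{x\}\sim e'_\bullet$; the paper simply attributes this to the Hausdorff property.
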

\begin{proof}
Consider the map $F:X\rightarrow Y,~x\mapsto\lim f\circ u$ for any net $u\in E$ such that $u\rightarrow x$ (which is well-defined due to the Hausdorff property). We need to show $F$ is uniform (hence unique, since continuous maps that agree on a dense set agree everywhere). Let $x_\bullet\rightarrowtail \widetilde{x}_\bullet$ in $X$. Pick convergent nets $E\ni e_\bullet({x_\alpha})\rightarrow x_\alpha$ $\forall\alpha$ and $E\ni \widetilde{e}_\bullet(\widetilde{x}_{\widetilde{\alpha}})\rightarrow \widetilde{x}_{\widetilde{\alpha}}$ $\forall\widetilde{\alpha}$ such that $e_\bullet({x_\bullet})\rightarrowtail \widetilde{e}_\bullet(\widetilde{x}_\bullet)$ (which is possible by Lemma \ref{tUConLmm4}). Then $F(e_\bullet\big({x_\alpha})\big)=f(e_\bullet\big({x_\alpha})\big)\rightarrow F(x_\alpha)$ $\forall\alpha$ and $F(\widetilde{e}_\bullet\big(\widetilde{x}_{\widetilde{\alpha}})\big)=f(\widetilde{e}_\bullet\big(\widetilde{x}_{\widetilde{\alpha}})\big)\rightarrow F(\widetilde{x}_{\widetilde{\alpha}})$ $\forall\widetilde{\alpha}$. By uniform continuity of $f$, $y_{\bullet\bullet}:=f(e_\bullet\big({x_\bullet}))$ and $\widetilde{y}_{\bullet\bullet}:=f(\widetilde{e}_\bullet\big(\widetilde{x}_\bullet))$ are approaching bi-nets (with $y_{\bullet\bullet}\rightarrowtail \widetilde{y}_{\bullet\bullet}$) satisfying $y_{\bullet\alpha}\rightarrow y'_\alpha:=F(x_\alpha)$ $\forall\alpha$ and $\widetilde{y}_{\bullet\widetilde{\alpha}}\rightarrow \widetilde{y}'_{\widetilde{\alpha}}:=F(\widetilde{x}_{\widetilde{\alpha}})$ $\forall\widetilde{\alpha}$. So, by Lemma \ref{BcnvLimit}, $y'_\bullet\rightarrowtail\widetilde{y}'_\bullet$ (hence $F$ is uniform).
\end{proof}

\begin{lmm}\label{tUConLmm3}
Let $X$ be a Hausdorff $lsb$-space, $E\subset X$ a dense subset, and $x_\bullet$ a cauchy net in $X$. Then there exists a cauchy bi-net $e_{\bullet\bullet}$ in $E$ such that $e_{\bullet\alpha}\rightarrow x_\alpha$ for each $\alpha$.
\end{lmm}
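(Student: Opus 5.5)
The plan is to derive Lemma \ref{tUConLmm3} directly from Lemma \ref{tUConLmm4}, applied with $\widetilde{x}_\bullet=x_\bullet$. Since $x_\bullet$ is cauchy, we have $x_\bullet\rightarrowtail x_\bullet$. Lemma \ref{tUConLmm4} then produces approaching bi-nets $e_{\bullet\bullet}\rightarrowtail\widetilde{e}_{\bullet\bullet}$ in $E$ with $e_{\bullet\alpha}\rightarrow x_\alpha$ and $\widetilde{e}_{\bullet\alpha}\rightarrow x_\alpha$ for every $\alpha$. The approach relation alone is not enough, because cauchyness of $e_{\bullet\bullet}$ requires that \emph{all subnets} of $e_{\bullet\bullet}$ approach each other. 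So the construction has to be done more carefully, applying the same choice procedure to arbitrary pairs of subnets of $x_\bullet$.

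First, for each $\alpha$, use density of $E$ and the Hausdorff property to choose a net $e_\bullet(x_\alpha)$ in $E$ converging to $x_\alpha$. As in the proof of Lemma \ref{tUConLmm4}, index these nets by a common directed set: the $\mathcal{B}$-neighborhoods of $x_\alpha$, made uniform in $\alpha$ by taking the derived-filter indexing of $\mathcal{F}_{x_\bullet}$. This gives the property that, for each fixed $\beta$, the net $e_\beta(x_\bullet)$ satisfies $e_\beta(x_\bullet)\rightarrowtail x_\bullet$, and also $x_\bullet\rightarrowtail e_\beta(x_\bullet)$ by local symmetry.

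Second, take any two subnets $e\circ\phi$ and $e\circ\psi$ of $e_{\bullet\bullet}$. Their projections onto the $\alpha$-coordinate are subnets $x\circ\phi'$ and $x\circ\psi'$ of $x_\bullet$, and these satisfy $x\circ\phi'\rightarrowtail x\circ\psi'$ because $x_\bullet$ is cauchy. Now apply Lemma \ref{BcnvLimit} in reverse spirit, or more concretely chain transitivity (Lemma \ref{NetAppHT}): the relations $e\circ\phi\rightarrowtail x\circ\phi'$ (marginal convergence plus $lsb$ symmetry), $x\circ\phi'\rightarrowtail x\circ\psi'$, and $x\circ\psi'\rightarrowtail e\circ\psi$ together give $e\circ\phi\rightarrowtail e\circ\psi$. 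Hence $e_{\bullet\bullet}$ is cauchy.

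The main obstacle is the step $x\circ\psi'\rightarrowtail e\circ\psi$, i.e.\ showing that a net of limit points approaches the bi-net converging to it. Local symmetry only gives $\{x_\alpha\}\rightarrowtail e_{\bullet\alpha}$ pointwise, not uniformly in $\alpha$. My first attempt will use Corollary \ref{TailExcCrlII}. Suppose the approach fails. Then some homogeneous selection $\mathcal{O}_{\varepsilon_o}$ of neighborhoods along a subnet of $e\circ\psi$ excludes tails of $x\circ\psi'$, since that net is cauchy. I will choose the indexing of $e_\bullet(x_\alpha)$ so that each $e_\beta(x_\alpha)$ lies in a $\mathcal{B}_{\varepsilon}$-set containing a tail of $x_\bullet$. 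This is exactly the derived-net-of-$\mathcal{F}_{x_\bullet}$ choice, and together with Lemma \ref{NFBscsLmm}(iii) it should yield the contradiction. If the pointwise-to-uniform passage still fails, I will instead simply take $e_{\beta\alpha}$ to range over a derived net of $\mathcal{F}_{x_\bullet}$ restricted to $E$. Then $\mathcal{F}_{e_{\bullet\bullet}}=\mathcal{F}_{x_\bullet}$ holds by density, and cauchyness follows from Lemma \ref{NFBscsLmm}(iii).
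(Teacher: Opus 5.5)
Your set-up matches the paper's. Its proof of this lemma only points back to the density argument of Lemma \ref{tUConLmm4}: columns $e_\bullet(x_\alpha)\rightarrow x_\alpha$ are chosen in $E$, rows are taken as derived nets of $\mathcal{F}_{x_\bullet}$, and the required properties are asserted rather than checked. Your verification that $e_{\bullet\bullet}$ is cauchy, however, has a genuine gap, and it is exactly where the content of the lemma lies. The choice of $e_{\beta\alpha}$ must be uniform in $\alpha$, and both outer links of your chain $e\circ\phi\rightarrowtail x\circ\phi'\rightarrowtail x\circ\psi'\rightarrowtail e\circ\psi$ depend on this, not only the last one. Lemma \ref{BcnvLimit} runs the other way: it passes from approach of bi-nets to approach of their nets of marginal limits. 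Columnwise convergence plus local symmetry does not give $e\circ\phi\rightarrowtail x\circ\phi'$. For instance, take the Cantor space $\{0,1\}^{\mathbb{N}}$ graded by its clopen partitions into cylinders, which is a Hausdorff $lsb$-space. Let $A=B=\mathbb{N}$, $x_\alpha\equiv x$, let $e'_n\rightarrow x$ be a sequence in $E$ with $e'_0\neq x$, and set $e_{\beta\alpha}:=e'_{\max(0,\beta-\alpha)}$. Every column converges to $x$. But along the diagonal $\phi(k)=(k,k)$ the bi-net is constantly $e'_0$, so $e\circ\phi\not\rightarrowtail x\circ\phi'$ and the bi-net is not cauchy. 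Your phrase ``made uniform in $\alpha$ by the derived-filter indexing of $\mathcal{F}_{x_\bullet}$'' does not supply the missing choice, since members of $\mathcal{F}_{x_\bullet}$ are sets containing tails of $x_\bullet$, not neighborhoods of the individual points $x_\alpha$. What is needed is an analogue of $d(e_{\beta\alpha},x_\alpha)<1/\beta$ for all $\alpha$ at once. One option is to let $\beta$ run over assignments of $\mathcal{B}$-neighborhoods to all the $x_\alpha$ simultaneously.

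The reversals you use are also unavailable. Local symmetry only reverses approach toward a constant net ($u\rightarrowtail\{x\}\Rightarrow\{x\}\rightarrowtail u$). It yields neither $x_\bullet\rightarrowtail e_\beta(x_\bullet)$ nor $x\circ\psi'\rightarrowtail e\circ\psi$. Reversing an approach between nonconstant nets is cauchy symmetry, the $csb$ hypothesis, which the lemma does not assume.

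Your first attempt via Corollary \ref{TailExcCrlII} does not get around this. The approach toward $e\circ\psi$ must hold for an \emph{arbitrary} homogeneous selection $\mathcal{O}_{\varepsilon_o}(e\circ\psi)$, chosen after $e$ is built. Knowing that $e_{\beta\alpha}$ lies in \emph{some} $\mathcal{B}_\varepsilon$-set containing a tail of $x_\bullet$ says nothing about the other $\mathcal{B}_{\varepsilon_o}$-sets containing $e_{\beta\alpha}$.

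The fallback fails on two counts. First, a derived net of $\mathcal{F}_{x_\bullet}$ in $E$ is indexed by members of that filter and has no columns converging to the individual $x_\alpha$, so the property $e_{\bullet\alpha}\rightarrow x_\alpha$ is lost. Second, you only get $\mathcal{F}_e\supset\mathcal{F}_{x_\bullet}$ (Lemma \ref{NFBscsLmm}(i)), not equality. Lemma \ref{NFBscsLmm}(iii) then gives $e\circ\phi\rightarrowtail x_\bullet$. The return step $x_\bullet\rightarrowtail e\circ\psi$, however, would need $e\circ\psi$ to be cauchy already (circular), together with the opposite inclusion $\mathcal{F}_{x_\bullet}\supset\mathcal{F}_{e\circ\psi}$. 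This is precisely the step for which Lemma \ref{CchyFltLmm}, the filter version of your fallback, invokes cauchy symmetry. So, as written, the proposal does not prove the lemma for $lsb$-spaces.
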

\begin{proof}
This uses the same reasoning as in the proof of Lemma \ref{tUConLmm4}.
\end{proof}

\begin{dfn}[{Isomorphism of base spaces}]\label{IsoBspsDfn}
A map $f:(X,\tau_X,\mathcal{B}_X)\rightarrow (Y,\tau_Y,\mathcal{B}_Y)$ is an \textbf{isomorphism of base spaces} (or \textbf{base space isomorphism}) if it is a \textbf{bi-uniform homeomorphism} (i.e., $f$ is a homeomorphism such that both $f$ and $f^{-1}$ are uniform). ({\bf footnote}\footnote{
In the categorical context, a \textbf{morphism of base spaces} is a map $f:(X,\tau_X,\mathcal{B}_X)\rightarrow (Y,\tau_Y,\mathcal{B}_Y)$ between base spaces such that for each $\varepsilon\in\mathcal{E}_Y$, there exists $\varepsilon'\in\mathcal{E}_X$ such that $f^{-1}(\mathcal{B}_{Y,\varepsilon})\subset f^{-1}(\mathcal{B}_{Y,\varepsilon'})$. This is precisely the definition of a \textbf{uniform map} (i.e., uniform maps are morphisms in the category base spaces). Thus, an \textbf{isomorphism of base spaces} is a bijective biuniform morphism of base spaces.
}).
\end{dfn}

\begin{thm}[{Completion of $lsb$-spaces}]\label{BSpCmplThm}
Let $X=(X,\tau,\mathcal{B})$ be a Hausdorff $lsb$-space. There exists a complete Hausdorff $lsb$-space $\widetilde{X}$ and a (bi)uniform embedding $f : X\hookrightarrow\widetilde{X}$ such that $f(X)$ is dense in $\widetilde{X}$. Moreover, $\widetilde{X}$ is unique up to isomorphism (of base spaces).

(NB: By Theorem \ref{tUConThm}, every uniform map $g:X\rightarrow Z$, to a complete $lsb$-space $Z$, factors as $g=\widetilde{g}\circ f:X\stackrel{f}{\hookrightarrow}\widetilde{X}\stackrel{\widetilde{g}}{\longrightarrow}Z$ via a unique uniform map $\widetilde{g}:\widetilde{X}\rightarrow Z$, namely, the unique uniform extension of $g\circ f^{-1}:f(X)\rightarrow Z,~y\mapsto g(f^{-1}(y))$.)
\end{thm}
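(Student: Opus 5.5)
The plan follows the classical Cauchy-net construction, built from the tools of this section. Let $C(X)$ be the class of cauchy nets in $X$. By Remark \ref{EqvNetRmk}, approach-equivalence $\sim$ is reflexive on $C(X)$. It is symmetric by definition and transitive by Lemma \ref{NetAppHT}(i). Since $C(X)$ is a proper class, I would restrict to nets indexed by derived filters, using Lemma \ref{NFBscsLmm}(iii). That lemma says two cauchy nets are equivalent iff $\mathcal{F}_u=\mathcal{F}_v$, so each class is represented by a filter.

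Set $\widetilde{X}:=C(X)/\!\sim$ and $f(x):=[\{x\}]$. The map $f$ is well defined because constant nets are convergent, hence cauchy. It is injective because $X$ is Hausdorff: $\{x\}\sim\{y\}$ means $x,y$ have the same $\mathcal{B}$-neighborhoods.

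Grade $\widetilde{X}$ as follows. For $\varepsilon\in\mathcal{E}$ and $O\in\mathcal{B}_\varepsilon$, let $\widetilde{O}:=\{[u]:\textrm{$u$ is eventually in $O$}\}$. By Corollary \ref{TailExcCrl} this does not depend on the representative, since a cauchy net is either eventually in $O$ or eventually outside it. Put $\widetilde{\mathcal{B}}_\varepsilon:=\{\widetilde{O}:O\in\mathcal{B}_\varepsilon\}$, and let $\widetilde{\tau}$ be the topology generated by $\widetilde{\mathcal{B}}$. Each $\widetilde{\mathcal{B}}_\varepsilon$ covers $\widetilde{X}$. For this I would take any $u$ and use cauchyness, $u\rightarrowtail u$, applied to a selection $\mathcal{O}_\varepsilon(u)$: this yields some $O_\varepsilon(u_i)$ containing a tail of $u$.

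The routine verifications, in order, are these:
\begin{itemize}
\item $f$ is a uniform embedding with inverse uniform on $f(X)$. Here $\{x\}\in\widetilde{O}$ iff $x\in O$, so approach in $X$ between nets transfers to approach of their images.
\item $f(X)$ is dense. For a cauchy $u$, the net $f\circ u$ converges to $[u]$, since every $\widetilde{O}\ni[u]$ contains a tail of $f\circ u$ by Corollary \ref{TailExcCrl}.
\item $\widetilde{X}$ is Hausdorff. This reduces to Lemma \ref{NFBscsLmm}: distinct classes have distinct derived filters, and $\mathcal{B}$-sets separate them using Corollary \ref{TailExcCrlII}.
\item $\widetilde{X}$ is an $lsb$-space. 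If $\widetilde{u}\rightarrowtail[v]$, then pulling back through representatives and using local symmetry of $X$ gives $[v]\rightarrowtail\widetilde{u}$.
\end{itemize}

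The main step is completeness. Let $\widetilde{u}=([u^\alpha])_\alpha$ be cauchy in $\widetilde{X}$. By density, choose for each $\alpha$ a net in $f(X)$ converging to $[u^\alpha]$, namely $f\circ u^\alpha$ itself. This gives a bi-net $e_{\bullet\bullet}$ with $e_{\bullet\alpha}\rightarrow[u^\alpha]$, in the spirit of Lemma \ref{tUConLmm3}. I would take the diagonal net in $X$, $w_{(\alpha,i)}:=u^\alpha_i$, on the product directed set. Applying Lemma \ref{BcnvLimit} to the cauchy relation $\widetilde{u}\circ\phi\rightarrowtail\widetilde{u}\circ\psi$, with the marginal convergences $f\circ u^\alpha\rightarrow[u^\alpha]$, gives that $w$ is cauchy in $X$. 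Then $\widetilde{u}\rightarrow[w]$, again by marginal convergence together with local symmetry.

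This diagonal step is where I expect the real obstacle. Lemma \ref{BcnvLimit} transports approach from bi-nets to their marginal limits, but here the approach must go the other way, from limits back to bi-nets. Tails of $w$ must be controlled uniformly in $\alpha$, which the marginal data alone do not guarantee. I would first try to restrict to a cofinal subset of pairs $(\alpha,i)$. On this subset $u^\alpha_i$ lies in a prescribed $\widetilde{O}\ni[u^\alpha]$, chosen via the filters $\mathcal{F}_{u^\alpha}$. I would then invoke Lemma \ref{NFBscsLmm}(iii) to compare filters instead of nets.

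For uniqueness, take two completions $\widetilde{X}_1,\widetilde{X}_2$. Apply Theorem \ref{tUConThm} to $f_2\circ f_1^{-1}:f_1(X)\rightarrow\widetilde{X}_2$ and to its inverse. This gives uniform extensions whose composites extend the identity on dense sets. By the uniqueness part of Theorem \ref{tUConThm} the composites are identities, so the extension is an isomorphism in the sense of Definition \ref{IsoBspsDfn}.
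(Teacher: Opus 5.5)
\textbf{The gap is in the completeness step, which is the heart of the theorem.} You build the bi-net from arbitrary representatives, $e_{\bullet\alpha}=f\circ u^\alpha$. You then claim that Lemma~\ref{BcnvLimit}, applied to $\widetilde u\circ\phi\rightarrowtail\widetilde u\circ\psi$ together with $f\circ u^\alpha\rightarrow[u^\alpha]$, makes the diagonal net $w$ cauchy. That lemma only runs from approach of bi-nets (plus marginal convergence) to approach of the marginal limits. It says nothing in the converse direction, so ``$w$ is cauchy'' is unsupported.

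As you observe yourself, each $u^\alpha$ is controlled only on an $\alpha$-dependent tail. Since the grading carries no composition (triangle-type) axiom, nothing lets you combine these into control of the tails of $w$ uniformly in $\alpha$. There is also a definitional problem: the ``product directed set'' is not defined when the $u^\alpha$ have different domains, as they do for derived-filter-indexed representatives. You would need something like $A\times\prod_\alpha I_\alpha$. Your proposed remedy (passing to a cofinal set of pairs and comparing filters via Lemma~\ref{NFBscsLmm}) is a plan rather than an argument. So completeness of $\widetilde X$ is not established.

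The paper avoids diagonalizing altogether. It applies Lemma~\ref{tUConLmm3} to the dense set $f(X)\subset\widetilde X$ to obtain a bi-net in $f(X)$ that is cauchy from the outset and whose marginals converge to the $\widetilde u_\alpha$. A cauchy net in $f(X)$ is $f\circ w$ for a cauchy net $w$ in $X$, so it converges to $[w]$ by your density step. Lemma~\ref{BcnvLimit} is then used in its valid direction: the bi-net approaches the constant net $[w]$, with marginal limits $\widetilde u_\alpha$ and $[w]$, which gives $\widetilde u\rightarrow[w]$.

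So the missing idea is that the approximating bi-net must be chosen cauchy, not deduced to be cauchy. You can close the gap relative to the paper by citing Lemma~\ref{tUConLmm3}. Be aware, though, that its proof there is only a sketch, and the uniformity problem you identified is exactly what it asserts without proof.

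That lemma also requires $\widetilde X$ to be a Hausdorff $lsb$-space, and your Hausdorff check is only gestured at. Corollary~\ref{TailExcCrlII} yields a set $\widetilde O$ containing one class but not the other, which is not a pair of disjoint neighborhoods.

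The remaining steps (well-definedness of $\widetilde O$, the covers, the embedding, density) agree with the paper's construction restricted to its subbasic sets $[f(Q)]_{\mathcal{B}}$. Your uniqueness argument is cleaner than the paper's appeal to Cantor--Schr\"oder--Bernstein: it uses the uniqueness clause of Theorem~\ref{tUConThm} to show that the two composite extensions are identities.
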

\begin{proof}
\textbf{Existence:} Let $N(X)$ be a sufficient set of nets in $X$ and $CN(X)$ consist of cauchy members of $N(X)$. For $u,v\in CN(X)$, let $u\sim v$ if $u\rightarrowtail v$ and $v\rightarrowtail u$ (as in Remark \ref{EqvNetRmk}), define $\widetilde{X}:={CN(X)\over\sim}=\{[u]:u\in CN(X)\}$, consider the map $f:X\hookrightarrow C,~x\mapsto [u^x]$ (where $u^x$ is the constant $x$-valued net $\{x\}$), and give $\widetilde{X}$ the topology $\widetilde{\tau}$ generated (as subbase) by the following sets:
\begin{enumerate}[leftmargin=0.7cm]
\item\label{CmplSet1} The sets $\{f(O):\textrm{for}~O\in\tau\}$.
\item\label{CmplSet2} The sets $\{[f(Q)]:Q\in\tau$ and $[f(Q)]$ consisting of all $[u]\in \widetilde{X}$ such that $Q$ is a neighborhood of a tail of $u$ in $X\}$ (i.e., $[f(Q)]:=\{[u]\in\widetilde{X}:Q\in\mathcal{F}_u\cap\tau~\textrm{in $X$}\}\supset f(Q)$).  (Together with (\ref{CmplSet1}) above, this makes every cauchy net in $f(X)$ convergent in $(\widetilde{X},\widetilde{\tau})$.)
\end{enumerate}
Similarly, give $\widetilde{X}=(\widetilde{X},\widetilde{\tau})$ the base $\widetilde{\mathcal{B}}$ generated (as subbase) by the following sets:
\begin{enumerate}[resume,leftmargin=0.7cm]
\item\label{CmplbSet1} The sets $\{f(O):\textrm{for}~O\in\mathcal{B}\}$.
\item\label{CmplbSet2} The sets $\{[f(Q)]_\mathcal{B}:Q\in\mathcal{B}$ open and $[f(Q)]_\mathcal{B}$ consisting of all $[u]\in \widetilde{X}$ such that $Q$ is a $\mathcal{B}$-neighborhood of a tail of $u$ in $X\}$ (i.e., $[f(Q)]_\mathcal{B}:=\{[u]\in\widetilde{X}:Q\in\mathcal{F}_u\cap\mathcal{B}~\textrm{in $X$}\}\supset f(Q)$).
\end{enumerate}

By construction, $f$ is a (bi)uniform embedding and $f(X)$ is dense in $\widetilde{X}$. Also, recall that a net in $\widetilde{X}$ is either frequently in $X$ or eventually in $\widetilde{X}\backslash X$. So, it remains only to ensure that we also have convergence of those cauchy nets with images in $\widetilde{X}\backslash X$.

Let $\widetilde{u}_\bullet$ be a cauchy net in $\widetilde{X}$. Since $f(X)$ is dense in $\widetilde{X}$ by (\ref{CmplSet1})$\&$(\ref{CmplSet2}), pick a cauchy bi-net $\widetilde{v}_{\bullet\bullet}=[v_{\bullet\bullet}]$ in $f(X)$ such that $\widetilde{v}_{\alpha\bullet}\rightarrowtail\widetilde{u}_\alpha$ for each $\alpha$ (which exists by Lemma \ref{tUConLmm3}). Again by (\ref{CmplSet1})$\&$(\ref{CmplSet2}), $\widetilde{v}_{\bullet\bullet}$ converges in $\widetilde{X}$ (as a cauchy net in $f(X)$). Hence, by Lemma \ref{BcnvLimit}, $\widetilde{u}_\bullet$ converges as well. This proves completeness of $(\widetilde{X},\widetilde{\tau},\widetilde{\mathcal{B}})$.

\textbf{Uniqueness:} Since $X$ is Hausdorff, $\widetilde{X}$ is also Hausdorff by uniqueness of the limit of a net in $\widetilde{X}$ (because the limit of a net in $f(X)$, hence also of a net in $\widetilde{X}$, is unique by the construction of $\widetilde{X}$ through quotienting). Let $f_i : X\rightarrow\widetilde{X}_i$, $i\in\{1,2\}$, be (bi)uniform embeddings such that each $\widetilde{X}_i$ is complete and each $f_i(X)$ is dense in $\widetilde{X}_i$. Then we get the injective map
\[
h : f_1(X)\subset\widetilde{X}_1\rightarrow\widetilde{X}_2,~~f_1(x)\mapsto f_2(x)
\]
which is (bi)uniform on $f_1(X)$, since for each $i$, $x_\bullet\rightarrowtail\widetilde{x}_\bullet$ in $X$ iff $f_i(x_\bullet)\rightarrowtail f_i(\widetilde{x}_\bullet)$ (hence $f_1(x_\bullet)\rightarrowtail f_1(\widetilde{x}_\bullet)$ iff $f_2(x_\bullet)\rightarrowtail f_2(\widetilde{x}_\bullet)$).

Since $f_1(X)$ is dense in $\widetilde{X}_1$, and $\widetilde{X}_2$ is complete, it follows by Theorem \ref{tUConThm} that $h$ extends to a unique (bi)uniform injective map $\widetilde{X}_1\rightarrow \widetilde{X}_2$. Hence, by symmetry (and Cantor-Shr\"oder-Bernstein theorem), we get an isomorphism $h : \widetilde{X}_1\rightarrow \widetilde{X}_2$.
\end{proof}

\begin{crl}
A $lsb$-space is precompact if and only if it has a compact completion.
\end{crl}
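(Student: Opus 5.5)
The plan is to prove the two implications separately, using Theorem \ref{CpctSpThm} (compact iff complete and precompact) together with the completion $f:X\hookrightarrow\widetilde{X}$ of Theorem \ref{BSpCmplThm}. As there, we take $X$ to be Hausdorff so that a completion exists and is unique up to isomorphism. The key observation is that cauchyness is preserved and reflected by a uniform embedding. Indeed, $f$ and $f^{-1}:f(X)\rightarrow X$ are uniform, so they map approaching nets to approaching nets. Since subnets of $f\circ u$ are exactly the images $f\circ u\circ\phi$ of subnets of $u$, a net $u$ in $X$ is cauchy iff $f\circ u$ is cauchy in $\widetilde{X}$.

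($\Leftarrow$) Suppose $X$ has a compact completion $\widetilde{X}$, with uniform embedding $f$. Let $u$ be any net in $X$. By compactness (Remark \ref{CmpCntRmk}(ii)), $f\circ u$ has a convergent subnet $f\circ u\circ\phi$ in $\widetilde{X}$. Since $\widetilde{X}$ is a $lsb$-space, a convergent net is cauchy, so $f\circ u\circ\phi$ is cauchy. By the key observation, $u\circ\phi$ is cauchy in $X$. Hence $X$ is precompact.

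($\Rightarrow$) Suppose $X$ is precompact, and let $\widetilde{X}$ be its completion from Theorem \ref{BSpCmplThm}. Since $\widetilde{X}$ is complete, Theorem \ref{CpctSpThm} reduces the task to showing that $\widetilde{X}$ is precompact. Take a net $\widetilde{u}_\bullet$ in $\widetilde{X}$. Using the density of $f(X)$ as in Lemma \ref{tUConLmm3}/\ref{tUConLmm4}, choose for each $\alpha$ a net $f(v_{\bullet\alpha})\rightarrow\widetilde{u}_\alpha$ with $v_{\bullet\alpha}$ in $X$. Next, form a diagonal net $w$ in $X$, indexed by pairs $(\alpha,\mathcal{O})$ with $\mathcal{O}$ ranging over $\mathcal{B}$-neighborhoods of $\widetilde{u}_\alpha$. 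Its point $f(w_{(\alpha,\mathcal{O})})$ is a term of $f(v_{\bullet\alpha})$ lying in $\mathcal{O}$. This makes $f\circ w$ ``shadow'' $\widetilde{u}_\bullet$: for every subnet of $w$ there is a corresponding subnet of $\widetilde{u}_\bullet$, and the two approach each other. By precompactness of $X$, $w$ has a cauchy subnet $w\circ\phi$, so $f\circ w\circ\phi$ is cauchy and hence converges in the complete space $\widetilde{X}$, say to $\widetilde{x}$. Marginal convergence (Lemma \ref{BcnvLimit}), applied to the bi-net $f(v_{\bullet\bullet})$ restricted along $\phi$, then transfers this limit to the corresponding subnet of $\widetilde{u}_\bullet$. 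That subnet converges and is therefore cauchy, so $\widetilde{X}$ is precompact and hence compact.

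The main obstacle is this diagonal step. We must make rigorous that the diagonal net's subnets correspond to subnets of $\widetilde{u}_\bullet$ and that approach (not just convergence) passes through the limit. This is where local symmetry of $\widetilde{X}$ and Lemma \ref{BcnvLimit} must be applied carefully, including the choice of the $\mathcal{B}$-neighborhoods so that the required tails exist. As a fallback, one can avoid the diagonal: show directly that every net in $\widetilde{X}$ frequently meets $f(X)$ in a controlled way, or argue via the derived filters $\mathcal{F}_u$ and Lemma \ref{NFBscsLmm}.
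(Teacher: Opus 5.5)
Your overall reduction is the one the paper evidently intends. The corollary is stated without proof right after Theorem \ref{BSpCmplThm}, as Theorem \ref{CpctSpThm} applied to the completion. Your ($\Leftarrow$) direction is fine, provided (as you assume) the compact completion is an $lsb$-space and the embedding is bi-uniform. Definition \ref{ComplDfn} only asks for a uniform embedding into a base space, and without local symmetry a convergent net need not be cauchy.

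The genuine gap is ($\Rightarrow$), which rests entirely on showing that the completion $\widetilde{X}$ of a precompact $X$ is precompact. You flag this step as open, and the tools you name do not close it, for three reasons.

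\emph{The diagonal index set.} You do not say how the pairs $(\alpha,\mathcal{O})$ are directed, and the natural order fails because neighborhoods of different points $\widetilde{u}_\alpha$ are incomparable. Since the grading set $\mathcal{E}$ is unordered, you also cannot index by a uniform ``size'', as one does with pairs $(\alpha,V)$, $V$ an entourage.

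\emph{Pointwise versus tail-wise closeness.} Even with a directed diagonal, let $\widetilde{u}\circ\psi$ be the subnet corresponding to $w\circ\phi$. All you know is that each $f(w_{\phi(k)})$ lies in \emph{some} neighborhood of $\widetilde{u}_{\psi(k)}$. Approach (Definition \ref{ApprchDfn}) between $f\circ w\circ\phi$ and $\widetilde{u}\circ\psi$ demands much more: for every $\varepsilon$ and every homogeneous selection of $\widetilde{\mathcal{B}}_\varepsilon$-neighborhoods of the points of one net, eventually each selected set must contain a whole tail of the other. In a uniform space, pointwise closeness is upgraded to this by composing entourages ($V\circ V\circ V\subset W$). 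An arbitrary grading has nothing playing that role.

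\emph{Lemma \ref{BcnvLimit} does not apply.} It needs the full hypothesis $u_{\bullet\bullet}\rightarrowtail v_{\bullet\bullet}$ for the bi-net $f(v_{\bullet\psi(\bullet)})$ on its product index set. Convergence of the single diagonal subnet $f\circ w\circ\phi$ does not give it. With $v_{\bullet\bullet}\equiv\widetilde{x}$, the hypothesis says the whole bi-net converges to $\widetilde{x}$. Since its marginals converge to the $\widetilde{u}_{\psi(\beta)}$, that is at least as strong as your goal. The completeness part of the proof of Theorem \ref{BSpCmplThm} uses the same bi-net pattern, so it cannot be cited to fill this in.

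What is missing is a mechanism carrying precompactness or compactness from the dense set $f(X)$ to $\widetilde{X}$. Classically this is the fact that the closure of a totally bounded set is totally bounded, proved with finite $V$-nets. You do get, from precompactness, uniformity of $f$ and completeness, that every net in $X$ has a subnet whose image converges in $\widetilde{X}$. Together with density, this does not suffice by itself. Take $[0,1]$ with the $K$-topology, $K=\{1/n:n\geq 1\}$: the set $[0,1]\setminus K$ is dense and every net in it has a convergent subnet, yet the space is not compact.

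If you can establish regularity of $\widetilde{X}$, a clean route is available:
\begin{enumerate}
\item For an ultrafilter $\widetilde{\mathcal{U}}$ on $\widetilde{X}$, the sets $f^{-1}(O)$ with $O\in\widetilde{\mathcal{U}}$ open form a filter base on $X$, by density.
\item Extend it to an ultrafilter on $X$, take a cauchy subnet of an associated universal net, and let $\widetilde{x}$ be the limit of its image.
\item For every open $O\ni\widetilde{x}$, use a closed neighborhood $\overline{O'}\subset O$ of $\widetilde{x}$ to get $O\in\widetilde{\mathcal{U}}$. Hence $\widetilde{\mathcal{U}}$ converges and $\widetilde{X}$ is compact.
\end{enumerate}
But nothing in the paper gives regularity, or any uniform-type structure, for $\widetilde{X}$. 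So as written your ($\Rightarrow$) is not proved, and the paper offers no argument for this step either.
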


\begin{dfn}[{Cauchy space, Cauchy filter}]\label{ConvChSpDfn}
Let $X$ be a set and $\mathcal{F}[X]$ the set of filters in $X$. A collection of filters $\mathcal{C}(X)\subset\mathcal{F}[X]$ is a \textbf{cauchy structure} on $X$ (making $X=(X,\mathcal{C}(X))$ a \textbf{cauchy space} and members of $\mathcal{C}(X)$ \textbf{cauchy filters}) if, for any $x\in X$ and $\mathcal{F},\mathcal{G}\in\mathcal{F}[X]$;
\begin{enumerate}
\item $\mathcal{F}_x:=\{A\subset X:x\in A\}\in\mathcal{C}(X)$.
\item If $\mathcal{F}\in\mathcal{C}(X)$ and $\mathcal{G}\supset\mathcal{F}$, then $\mathcal{G}\in\mathcal{C}(X)$.
\item If $\mathcal{F},\mathcal{G}\in\mathcal{C}(X)$ and $F\cap G\neq\emptyset$ for all $F\in\mathcal{F}$ and $G\in\mathcal{G}$, then $\mathcal{F}\cap\mathcal{G}\in\mathcal{C}(X)$.
\end{enumerate}
\end{dfn}

\begin{thm}[{Every $csb$-space is a cauchy space}]\label{EBPCSthm}
Every $csb$-space $X=(X,\tau,\mathcal{B})$ has a cauchy structure that induces its topology (making it a cauchy space).
\end{thm}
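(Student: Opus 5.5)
The natural candidate for the cauchy structure is the collection of filters that are cauchy in the sense of Definition \ref{ApprchDfn}. That is, I would set
\[
\mathcal{C}(X):=\{\mathcal{F}\in\mathcal{F}[X]:\textrm{every derived net of $\mathcal{F}$ is cauchy}\}.
\]
Equivalently, via the footnote to Definition \ref{ApprchDfn}, these are the filters $\mathcal{F}$ whose associated net $w_\mathcal{F}$ is cauchy. By Lemma \ref{CchyFltLmm}, in a $csb$-space this contains $\mathcal{F}_u$ for every cauchy net $u$. I will then check the three axioms of Definition \ref{ConvChSpDfn} in turn, and finally check that the convergence induced by $\mathcal{C}(X)$ recovers $\tau$.

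\textbf{Axiom (1).} Any derived net of the point filter $\mathcal{F}_x$ is eventually constant at $x$, because $\{x\}\in\mathcal{F}_x$. Constant nets converge, and convergent nets are cauchy in an $lsb$-space, so $\mathcal{F}_x\in\mathcal{C}(X)$.

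\textbf{Axiom (2).} Let $\mathcal{G}\supset\mathcal{F}$ with $\mathcal{F}$ cauchy, and let $w$ be a derived net of $\mathcal{G}$. Then $\mathcal{F}_w\supset\mathcal{G}\supset\mathcal{F}=\mathcal{F}_{w_\mathcal{F}}$. Since $w_\mathcal{F}$ is cauchy, Lemma \ref{NFBscsLmm}(iii) gives $w\rightarrowtail w_\mathcal{F}$. Cauchy symmetry gives $w_\mathcal{F}\rightarrowtail w$, so $w\sim w_\mathcal{F}$. Every subnet of $w$ also approaches $w_\mathcal{F}$, by Lemma \ref{NetAppHT}(ii). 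Transitivity then yields $w\circ\phi\rightarrowtail w_\mathcal{F}\rightarrowtail w\circ\psi$, at least once I know $w_\mathcal{F}\rightarrowtail w\circ\psi$. For that, I use $\mathcal{F}_{w\circ\psi}\supset\mathcal{F}$, so $w\circ\psi\rightarrowtail w_\mathcal{F}$, and then cauchy symmetry again. Hence $w$ is cauchy.

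\textbf{Axiom (3).} This is the main obstacle. Take $\mathcal{F},\mathcal{G}$ cauchy and meshing, and let $\mathcal{H}=\mathcal{F}\vee\mathcal{G}$ be the filter generated by all $F\cap G$; it is proper because they mesh. By axiom (2), $\mathcal{H}$ is cauchy. Put $h=w_\mathcal{H}$. Then $h\rightarrowtail w_\mathcal{F}$ and $h\rightarrowtail w_\mathcal{G}$ by Lemma \ref{NFBscsLmm}(iii), and cauchy symmetry makes these equivalences. By transitivity $w_\mathcal{F}\sim w_\mathcal{G}$, and Lemma \ref{NFBscsLmm}(iii) gives $\mathcal{F}=\mathcal{F}_{w_\mathcal{F}}=\mathcal{F}_{w_\mathcal{G}}=\mathcal{G}$, interpreting derived filters as the tail filters in the cases needed.

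So $\mathcal{F}\cap\mathcal{G}=\mathcal{F}$ is cauchy. I expect the delicate point to be reconciling $\mathcal{F}_u$ (the $\tau$-derived filter, generated by open sets) with arbitrary filters. If the identification $\mathcal{F}=\mathcal{G}$ only holds at the level of open members, I would instead argue directly. Any derived net $w$ of $\mathcal{F}\cap\mathcal{G}$ splits into the parts frequently in members of $\mathcal{F}$ and of $\mathcal{G}$. Each subnet of $w$ then has a further subnet approaching $w_\mathcal{F}\sim w_\mathcal{G}$, and hereditariness together with transitivity through the common class $[w_\mathcal{F}]$ gives that all subnets of $w$ approach each other.

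\textbf{Induced topology.} The cauchy space $(X,\mathcal{C}(X))$ induces the convergence ``$\mathcal{F}\to x$ iff $\mathcal{F}\cap\mathcal{F}_x\in\mathcal{C}(X)$''. I will show that this holds iff $\mathcal{F}$ refines the $\tau$-neighbourhood filter of $x$. For the forward direction, apply the argument of axiom (3) to the cauchy filter $\mathcal{F}\cap\mathcal{F}_x$: it gives $w_{\mathcal{F}}\sim\{x\}$, i.e.\ $w_\mathcal{F}\to x$ by Remark \ref{EqvNetRmk}. For the converse, if $w_\mathcal{F}\to x$, then $w_\mathcal{F}\sim\{x\}$ by local symmetry. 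The derived nets of $\mathcal{F}\cap\mathcal{F}_x$ then all converge to $x$, so they are cauchy. Since convergence of $\mathcal{B}$-neighbourhoods is $\tau$-convergence, the induced topology is $\tau$.
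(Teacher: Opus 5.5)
\textbf{Gap 1: the finish of axiom (3).} Up to $w_\mathcal{F}\sim w_\mathcal{G}$ your argument is sound. It also genuinely uses the meshing hypothesis, via the proper filter $\mathcal{F}\vee\mathcal{G}$, which the paper's own step (iii) never does. But the conclusion $\mathcal{F}=\mathcal{G}$ is false. In $\mathbb{R}$, the point filter $\{A:0\in A\}$ and the neighbourhood filter of $0$ are both cauchy and mesh, yet they differ.

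The reason is that approach only sees members of $\mathcal{B}$, and Lemma \ref{NFBscsLmm} compares derived filters generated by open sets containing tails. So $w_\mathcal{F}\sim w_\mathcal{G}$ says nothing about non-open members. Reading $\mathcal{F}_u$ as the tail filter does not rescue this, because then the lemma itself fails: $1/n\rightarrowtail\{0\}$, yet $\{0\}$ contains no tail of $(1/n)$. The same conflation makes ``$\mathcal{F}_w\supset\mathcal{G}$'' and ``$\mathcal{F}=\mathcal{F}_{w_\mathcal{F}}$'' in your (2) false as written. The open-set inclusion you actually use there is, however, true.

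Your fallback does not close the gap. From ``every subnet of $w$ has a further subnet approaching $w_\mathcal{F}$'' you cannot reach $w\circ\phi\rightarrowtail w_\mathcal{F}$ by hereditariness and transitivity, because hereditariness only passes approach down to subnets. You would need a Urysohn-type property of $\rightarrowtail$, which you never prove.

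No splitting is needed. Fix $\varepsilon$ and a selection $\mathcal{O}_\varepsilon(w_\mathcal{F})$. Since $w_\mathcal{F}\rightarrowtail w_\mathcal{F}$ and $w_\mathcal{G}\rightarrowtail w_\mathcal{F}$, for all large $k$ the single set $O_\varepsilon(w_\mathcal{F}(k))$ contains a tail of $w_\mathcal{F}$ and a tail of $w_\mathcal{G}$. That is, it contains some $F\in\mathcal{F}$ and some $G\in\mathcal{G}$, hence $F\cup G\in\mathcal{F}\cap\mathcal{G}$, hence a tail of $w_{\mathcal{F}\cap\mathcal{G}}$. So $w_{\mathcal{F}\cap\mathcal{G}}\rightarrowtail w_\mathcal{F}$. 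Hereditariness, cauchy symmetry and transitivity, exactly as in your (2), then make $w_{\mathcal{F}\cap\mathcal{G}}$ cauchy.

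\textbf{Gap 2: the definition of $\mathcal{C}(X)$.} You must define $\mathcal{C}(X)$ by cauchyness of $w_\mathcal{F}$, not of the derived nets on $(\mathcal{F},\supset)$. The footnote's ``equivalence'' breaks down when $\mathcal{F}$ has a least member: every derived net is then eventually constant, hence cauchy, while $w_\mathcal{F}$ still ranges over all of that member. With your literal definition, $\{X\}\in\mathcal{C}(X)$, and two things go wrong:
\begin{itemize}
\item axiom (2) fails, since on $\mathbb{R}$ the filter generated by the sets $\{n,n+1,\dots\}$ refines $\{X\}$ but is not cauchy;
\item $\{X\}\cap\mathcal{F}_x=\{X\}\in\mathcal{C}(X)$ would make $\{X\}$ converge to every point in the induced convergence.
\end{itemize}
All your arguments actually run through $w_\mathcal{F}$, so changing the definition repairs this.

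\textbf{A smaller point in the topology step.} $\mathcal{F}$ and $\mathcal{F}_x$ need not mesh, so ``the argument of axiom (3)'' must go through the coarser cauchy filter $\mathcal{K}=\mathcal{F}\cap\mathcal{F}_x$ rather than a join. We have $w_\mathcal{F}\rightarrowtail w_\mathcal{K}$ and $w_{\mathcal{F}_x}\rightarrowtail w_\mathcal{K}$. Cauchy symmetry and transitivity then give $w_\mathcal{F}\rightarrowtail w_{\mathcal{F}_x}$, i.e.\ $w_\mathcal{F}\rightarrow x$.

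With these changes your proof works. It also verifies that the structure induces $\tau$, which the paper's proof does not address.
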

\begin{proof}
We simply need to show that our cauchy filters from Definition \ref{ApprchDfn} satisfy the axioms of conventional cauchy filters in Definition \ref{ConvChSpDfn}. Let $\mathcal{F}^\mathcal{B}[X]$ denote the set of all $\mathcal{B}$-filters in $X$. Fix any $\mathcal{F},\mathcal{G}\in \mathcal{F}^\mathcal{B}[X]$.

(i) If $\mathcal{F}=\mathcal{F}_x$ for some $x\in X$, then $\mathcal{F}$ is cauchy (because every derived net of $\mathcal{F}$ converges to $x$ and every convergent net in $X$ is cauchy).

(ii) If $\mathcal{F}$ is cauchy and $\mathcal{G}\supset\mathcal{F}$, then $\mathcal{G}$ is cauchy. This is because $\mathcal{F}=\mathcal{F}^\mathcal{B}_u$ for a derived net $u$ of $\mathcal{F}$, and so $\mathcal{G}=\mathcal{F}^\mathcal{B}_{u\circ \phi}$ for some subnet $u\circ \phi$ of $u$ (which is therefore cauchy since $u$ is cauchy). Hence every derived net of $\mathcal{G}$ is cauchy (as a net that apporaches the cauchy net $u\circ\phi$). (\label{NetFiltFN}{\bf footnote}\footnote{
\textbf{Some basic facts about nets and filters (also see Lemma \ref{NFBscsLmm}):} (i) If $u:(\mathcal{F}_v,\supset)\rightarrow X,~A\mapsto u(A)\in A$ is a derived net of a derived filter $\mathcal{F}_v$, then $\mathcal{F}_u\supset\mathcal{F}_v$.  (ii) Every filter $\mathcal{F}$ can be expressed as a derived filter $\mathcal{F}=\mathcal{F}_u$. (iii) $\mathcal{F}_{u\circ\phi}\supset\mathcal{F}_u$ for each subnet $u\circ \phi$. (iv) If $\mathcal{G}\supset \mathcal{F}_u$, then $\mathcal{G}=\mathcal{F}_{u\circ\phi}$ for some subnet $u\circ\phi$.
}).

(iii) If $\mathcal{F},\mathcal{G}$ are both cauchy and $F\cap G\neq\emptyset$ for all $F\in\mathcal{F},G\in\mathcal{G}$, then $\mathcal{F}\cap \mathcal{G}$ is cauchy. This is because a derived net $w$ of $\mathcal{F}\cap\mathcal{G}$ extends to a derived net of $\mathcal{F}$ or $\mathcal{G}$ (which is therefore cauchy), making $w$ a cauchy net (as a tail of a cauchy net).

Therefore, the set of cauchy $\mathcal{B}$-filters $\mathcal{C}\mathcal{F}^\mathcal{B}[X]\subset\mathcal{F}^\mathcal{B}[X]$ in $X$ makes $(X,\mathcal{C}(X))=(X,\mathcal{C}\mathcal{F}^\mathcal{B}[X])$ a cauchy space.
\end{proof}

%%########################################################################################################################
\section{\textnormal{\bf The case of product and function base spaces}}\label{CmPrdFxnSpSec}
%%########################################################################################################################
%%########################################################################################################################
As mentioned just before the start of Section \ref{ApprSec}, in this section in particular, the phrases ``cauchy'', ``complete'', ``uniform'', ``precompact'' should be prefixed by ``\emph{induction-free}'', but for notational convenience, we will drop the prefix. For us, $X^Y$ (resp., $C(Y,X)$) denotes the set of all maps (resp., continuous maps) from $Y$ to $X$ but this is not standard; for example, in \cite{engelking1989}, $C(Y,X)$ is denoted by $X^Y$. Although we have not fully used the traditional language of uniform spaces (which we have, for convenience, simply treated as examples of $u$-spaces), the conceptual contents of \cite[Ch. X]{BbkGT1966} appear to be similar to our treatment of function spaces here (except that we have no preassigned topology inducing structure such as the uniform structure).

\begin{dfn}[{Topologies of uniform, pointwise, compact, and complete convergence}]\label{TpUnfCnvDfn}
Let $Y$ be a set, $X=(X,\tau_X,\mathcal{B}_X)$ a base space, and $\mathcal{F}\subset X^Y$. For $f_\alpha,f\in \mathcal{F}$ and $Z\subset Y$, we say $f_\alpha$ \textbf{converges uniformly to $f$ on $Z$}, written $f_\alpha\stackrel{u|_Z}{\longrightarrow}f$, if for any homogeneous family $\mathcal{O}_\varepsilon=\{O_\varepsilon(f(y))\ni f(y):y\in Z\}\subset\mathcal{B}_{X,\varepsilon}$ of open subsets of $X$, there exists $\alpha^{\mathcal{O}_\varepsilon}$ such that for each $y\in Z$, $f_{[\alpha^{\mathcal{O}_\varepsilon},~]}(y):=\{f_\alpha(y)\}_{\alpha\geq \alpha^{\mathcal{O}_\varepsilon}}\subset O_\varepsilon(f(y))$. If $Z=Y$, we simply say $f_\alpha$ \textbf{converges uniformly} to $f$, written $f_\alpha\stackrel{u}{\longrightarrow}f$.

Given a family of sets $\mathcal{Z}\subset\mathcal{P}(Y)$, a net $\{f_\alpha\}_{\alpha\in\mathcal{A}}\subset\mathcal{F}$ \textbf{converges uniformly with respect to $\mathcal{Z}$} to $f\in \mathcal{F}$ (making it a \textbf{$\mathcal{Z}$-uniformly convergent net}), written $f_\alpha\stackrel{u|_\mathcal{Z}}{\longrightarrow}f$, if $f_\alpha\stackrel{u|_Z}{\longrightarrow}f$ for all $Z\in\mathcal{Z}$.

A subset $\mathcal{C}\subset \mathcal{F}$ is \textbf{$\mathcal{Z}$-uniformly closed} if every $\mathcal{Z}$-uniformly convergent net in $\mathcal{C}$ converges in $\mathcal{F}$ to a point in $\mathcal{C}$. The \textbf{topology of $\mathcal{Z}$-uniform convergence} $\tau_{uc|_\mathcal{Z}}$ on $\mathcal{F}$ is the topology  whose closed sets are the $\mathcal{Z}$-uniformly closed subsets of $\mathcal{F}$. A subbase $\mathcal{S}\mathcal{B}_{uc|_{\mathcal{Z}}}$ for $\tau_{uc|_\mathcal{Z}}$ consists of the sets
{\small\[
\textstyle [Z,O]_{uc|_\mathcal{Z}}:=\{f\in \mathcal{F}:f(y)\in O_y~\forall y\in Z\},~{~}~\textrm{for}~{~}~Z\in\mathcal{Z},~O=\{O_y\}_{y\in Y}\in\prod_{y\in Y}\tau_X=\tau_X{}^Y.
\]}

We have the following special cases of $\tau_{uc|_\mathcal{Z}}$ ({\bf footnote}\footnote{
When working with nets, a convenient way to compare topologies is this: $\tau_1\supset\tau_2$ iff every net that converges in $\tau_1$ also convergence in $\tau_2$ (i.e., convergence in $\tau_1$ implies convergence in $\tau_2$).
}):
\begin{enumerate}[leftmargin=0.7cm]
\item \textbf{Topology of uniform convergence ($\tau_{uc}$)}: $\tau_{uc}:=\tau_{uc}|_{\{Y\}}=\tau_{uc}|_{\mathcal{P}(Y)}$, i.e., $\mathcal{Z}=\{Y\}$ (or equivalently, $\mathcal{Z}=\mathcal{P}(Y)$). ({\bf footnote}\footnote{
   We have $f_\alpha\stackrel{u}{\longrightarrow}f$ $\iff$ $f_\alpha\stackrel{u|_{\{Y\}}}{\longrightarrow}f$, $\iff$ $f_\alpha\stackrel{u|_{\mathcal{P}(Y)}}{\longrightarrow}f$.
    })
\item \textbf{Topology of pointwise convergence ($\tau_p$)}: $\tau_p:=\tau_{uc|_{Singlt(Y)}}$, i.e., $\mathcal{Z}=Singlt(Y):=\big\{\{y\}:y\in Y\big\}$.
\item \textbf{Topology of compact convergence ($\tau_{cc})$}: If $Y$ is a space, $\tau_{\textrm{cc}}:=\tau_{uc|_{K(Y)}}$, i.e., $\mathcal{Z}=K(Y)$ are the compact subsets of $Y$.
\item \textbf{Topology of complete convergence ($\tau_{c_pc}$)}: If $Y$ is a $lsb$-space (or base space if need be) and $\textrm{Cpl}(Y):=\{$complete subsets of $Y\}$, $\tau_{c_pc}:=\tau_{uc|_{\textrm{Cpl}(Y)}}$, i.e., $\mathcal{Z}=\textrm{Cpl}(Y)$.
\end{enumerate}

Given a collection $\mathcal{Z}\subset\mathcal{P}(Y)$, the \textbf{$\mathcal{Z}$-open topology} $\tau_{\mathcal{Z} o}\subset\tau_{uc|_{\mathcal{Z}}}$ on $\mathcal{F}\subset X^Y$ (for $lsb$-spaces $X,Y$) is the topology with a subbase consisting of sets of the form
    \[
    [Z,O]_{co}:=\{f\in \mathcal{F}:f(Z)\subset O\},~~\textrm{for $Z\in \mathcal{Z}$ and $O\in\tau_X$}.
    \]
The topology $\tau_{co}:=\tau_{K(Y)o}$ is known as the \textbf{compact-open topology}. In the same manner, we will call $\tau_{c_po}:=\tau_{Cpl(Y)o}$ the \textbf{complete-open topology}.
\end{dfn}

\begin{rmk}\label{BallTopRmk}
\begin{enumerate}[leftmargin=0.7cm]
\item When $X=(X,u)$ is a $u$-space (with $u:X\times X\rightarrow (Z,\mathcal{E})$ continuous), $\tau_{uc|_{\mathcal{Z}}}$ is precisely the topology on $\mathcal{F}\subset X^Y$ (call it the \textbf{topology of ball-like convergence}) with a subbase of ball-like sets
\[
\mathcal{S}\mathcal{B}_{\textrm{ball}}:=\left\{B^{u,Z}_\varepsilon(f)=B^u_{[Z,\varepsilon]_{uc|_\mathcal{Z}}}(f)~|~\varepsilon\in\mathcal{E},~f\in\mathcal{F},~Z\in \mathcal{Z}\right\},
\]
where $~B^{u,Z}_\varepsilon(f):=\{g\in\mathcal{F}:g(y)\in B^u_\varepsilon(f(y))~\forall y\in Z\}=\big\{g\in\mathcal{F}:\{u(f(y),g(y))\}_{y\in Z}\subset\varepsilon\big\}=\bigcap_{y\in Z}[y,B^u_\varepsilon(f(y))]_p\cap\mathcal{F}$ (see relevant notation for \emph{product base spaces} in Definition \ref{AssocBsp} and \emph{product $u$-spaces} in Definition \ref{PrdUspDfn}).

\item In Definition \ref{TpUnfCnvDfn}, ``\emph{convergence of a net}'' can be accordingly upgraded to ``\emph{approach between nets}'' whenever a topology $\tau$ on $X^Y$ (such as $\tau=\tau_{uc|_{\mathcal{Z}}}$ or another) and a graded base $\mathcal{B}=\bigcup_{\varepsilon\in\mathcal{E}}\mathcal{B}_{\varepsilon}$ for $\tau$ is provided (as in Definitions \ref{AssocBsp} and \ref{PrdUspDfn}), making $X^Y=(X^Y,\tau,\mathcal{B})$ a base space.
\end{enumerate}
\end{rmk}

\begin{question}\label{BodyQuest3} 
It is obvious that the ``singleton-open topology'' equals the ``topology of pointwise convergence'' (i.e., $\tau_{\textrm{Singlt(Y)}o}=\tau_p$).  Also, it is known (\cite[Theorem 8.2.6]{engelking1989}, \cite[Theorem 46.8, for metric spaces]{Munkres2000}) that for uniform spaces $X,Y$, we have $\tau_{co}=\tau_{cc}$ (i.e., $\tau_{K(Y)o}=\tau_{uc|_{K(Y)}}$) on continuous maps $\mathcal{F}=C(Y,X)\subset X^Y$ (which is mainly because continuous maps between uniform spaces are uniform on compact subsets, a fact also true for $lsb$-spaces by Theorem \ref{CompCsbThm}). For which $lsb$-spaces (especially $u$-spaces) $X,Y$ does this remain true?

More generally, given $\mathcal{F}\subset X^Y$ (e.g., $\mathcal{F}$ = $X^Y$, $C(Y,X)$, $UC(Y,X)$, ...), for which $lsb$-spaces $X,Y$ and families of sets $\mathcal{Z}\subset\mathcal{P}(Y)$ do we have ~$\tau_{\mathcal{Z}o}=\tau_{uc|_{\mathcal{Z}}}$? (NB: Since we always have $\tau_{\mathcal{Z}o}\subset\tau_{uc|_{\mathcal{Z}}}$, we only need to find out when $\tau_{\mathcal{Z}o}\supset\tau_{uc|_{\mathcal{Z}}}$.)
\end{question}

\begin{dfn}[{Product space, Product base space, Pointwise cauchyness}]\label{AssocBsp}
Recall that given sets $X$, $Y$, and a family of sets $\{X_y:y\in Y\}$, we write $\prod_{y\in Y}X_y:=\{\textrm{selection maps}~f:Y\rightarrow\bigcup_{y\in Y}X_y,~y\mapsto f(y)\in X_y\}$ and $X^Y:=\prod_{y\in Y}X$. If the $X_y$'s are spaces $X_y=(X_y,\tau_y)$, then $\prod_yX_y$ is often considered a space with respect to the \textbf{product topology} (making it a \textbf{product space}) $\tau_p$ with subbase consisting of sets of the form
\[
\textstyle [y,O]_p:=\{f\in\prod_yX_y:f(y)\in O_y\},~{~}~\textrm{for $y\subset Y$ and $O=\{O_y\}_{y\in Y}\in\prod_{y\in Y}\tau_y$}
\]
(also simply written as $[y,O]_p:=\{f\in\prod_yX_y:f(y)\in O\}$, for $y\subset Y$ and $O\in\tau_y$), hence with base consisting of sets of the form
\[
\textstyle [F,O]_p:=\bigcap_{y\in F}[y,O_y],~{~}~\textrm{for finite $F\subset Y$ and $O\in\prod_{y\in Y}\tau_y$.}
\]

Given a family of base spaces $\{X_y=(X_y,\tau_y,\mathcal{B}_y)\}_{y\in Y}$, where $\mathcal{B}_y=\bigcup_{\varepsilon_y\in\mathcal{E}_y}\mathcal{B}_{y,\varepsilon_y}$, for the product topology $\tau_p$, we pick the \textbf{$\tau_p$-product base space} $(\prod_{y\in Y} X_y,\tau_p,\mathcal{B}_p)$, $\mathcal{B}_p:=\bigcup_{\varepsilon\in\prod_y\mathcal{E}_y}\mathcal{B}_{p,\varepsilon}$, with the \emph{base cover} $\mathcal{B}_{p,\varepsilon}$ (where $\varepsilon=\{\varepsilon_y\}_{y\in Y}\in \prod_y\mathcal{E}_y$) given by finite intersections of the \emph{subbase cover}
\[
\textstyle \mathcal{S}\mathcal{B}_{p,\varepsilon}:=\big\{[y,O]_p~|~y\in Y,~O\in\prod_{y\in Y}\mathcal{B}_{y,\varepsilon_y}\big\}.
\]
More generally, with $\mathcal{Z}\subset\mathcal{P}(Y)$, for the topology $\tau_{uc|_{\mathcal{Z}}}$ on $\prod_yX_y$ (an update of that on $X^Y$ from  Definition \ref{TpUnfCnvDfn}) with subbase $\mathcal{S}\mathcal{B}_{uc|_{\mathcal{Z}}}$ consisting of the sets
{\small\[
\textstyle [Z,O]_{uc|_\mathcal{Z}}:=\{f\in \prod_yX_y:f(y)\in O_y~\forall y\in Z\},~{~}~\textrm{for}~{~}~Z\in\mathcal{Z},~O\in\prod_{y\in Y}\tau_y,
\]}we pick the \textbf{$\tau_{uc|_{\mathcal{Z}}}$-product base space} $(\prod_{y\in Y} X_y,\tau_{uc|_{\mathcal{Z}}},\mathcal{B}_{uc|_{\mathcal{Z}}})$, $\mathcal{B}_{uc|_{\mathcal{Z}}}:=\bigcup_{\varepsilon\in\prod_y\mathcal{E}_y}\mathcal{B}_{uc|_{\mathcal{Z}},\varepsilon}$, with the \emph{base cover} $\mathcal{B}_{uc|_{\mathcal{Z}},\varepsilon}$ given by finite intersections of the \emph{subbase cover}
\[
\textstyle \mathcal{S}\mathcal{B}_{uc|_{\mathcal{Z}},\varepsilon}:=\big\{[Z,O]_{uc|_{\mathcal{Z}}}~|~Z\in\mathcal{Z},~O\in\prod_{y\in Y}\mathcal{B}_{y,\varepsilon_y}\big\}.
\]

Let $Y$ be a set and $X$ a base space. A net $f_\alpha\in (X^Y,\tau_p,\mathcal{B}_p)$ is \textbf{pointwise cauchy} if $f_\alpha(y)\in X$ is cauchy for each $y\in Y$.
\end{dfn}

Henceforth, unless stated otherwise, $(\prod_{y\in Y} X_y,\tau_{uc|_{\mathcal{Z}}},\mathcal{B})$ denotes the above mentioned product base space (i.e., with $\mathcal{B}:=\mathcal{B}_{uc|_{\mathcal{Z}}}$).

\begin{dfn}[{Product $u$-structure, Product $u$-space}]\label{PrdUspDfn}
Let $\{(X_y,u_y)\}_{y\in Y}$ (where $u_y:X_y\times X_y\rightarrow (Z_y,\mathcal{E}_y),~(a,b)\mapsto u_y(a,b)$ is continuous) be a family of $u$-spaces. The associated \textbf{product $u$-space} $(X,u):=\big(\prod_{y\in Y}X_y,\prod_{y\in Y}u_y\big)$ is given by the \textbf{product $u$-structure}
\[
\textstyle u:X\times X\rightarrow (Z,\mathcal{E}):=\big(\prod_yZ_y,\mathcal{E}_{\prod_yZ_y}\big),~(x,x')\mapsto \big\{u_y(x(y),x'(y)\big\}_{y\in Y},
\]
where $\mathcal{E}:=\mathcal{E}_{\prod_{y\in Y}Z_y}$ is a ``desired collection'' of open sets in $Z:=\prod_{y\in Y}Z_y$ generated by the sets
\begin{align*}
 [F,\varepsilon]_p&\textstyle:=\{z\in Z:z(y)\in\varepsilon_y~\forall y\in F\}=\bigcap_{y\in F}[y,\varepsilon],~{}~\textrm{where $~[y,\varepsilon]:=\{z\in Z:z(y)\in \varepsilon_y\}$},
\end{align*}
for finite $F\subset Y$ and $\varepsilon=\{\varepsilon_y\}_{y\in Y}\in\mathcal{E}$.
({\bf footnote}\footnote{
Here, ``desired collection'' denotes ``uniform structure'' in the case where the $X_y$'s are uniform spaces.
}).
The associated \textbf{product $u$-balls} are
\begin{align*}
 B^u_{[F,\varepsilon]_p}(x)&\textstyle:=\big\{x'\in X:u(x,x')\in[F,\varepsilon]_p\big\}=\big\{x'\in X:u_y\big(x(y),x'(y)\big)\in\varepsilon_y~\forall y\in F\big\}
 \\
 &\textstyle=\bigcap_{y\in F}\big\{x'\in X:x'(y)\in B^{u_y}_{\varepsilon_y}(x(y))\big\}=\bigcap_{y\in F}B^u_{[y,\varepsilon]_p}(x)\\
 &\textstyle=\bigcap_{y\in F}[y,B^u_\varepsilon(x)],\\
 B^u_{[y,\varepsilon]_p}(x)&\textstyle:=[y,B^u_\varepsilon(x)]_p=[y,\{B^{u_y}_{\varepsilon_y}(x(y))\}_{y\in Y}]_p=\big\{x'\in X:x'(y)\in B^{u_y}_{\varepsilon_y}(x(y))\big\},
\end{align*}
where $~B^u_\varepsilon(x):=\{B^{u_y}_{\varepsilon_y}(x(y))\}_{y\in Y}\in\prod_{y\in Y}\tau_y$.

Therefore, the product $u$-space $(X,u)$ may be viewed as the \emph{natural} product base space $(X,\tau_u,\mathcal{B})$, $\mathcal{B}=\bigcup_{\varepsilon\in\prod_y\mathcal{E}_y}\mathcal{B}_\varepsilon$, with the \emph{base cover} $\mathcal{B}_\varepsilon$ given by finite intersections of the \emph{subbase cover}
\[
\textstyle \mathcal{S}\mathcal{B}_\varepsilon:=\big\{[y,B^u_\varepsilon(x)]_p~|~y\in Y\}.
\]
We note that, here, the \emph{natural} product base space $(X,\tau_u,\mathcal{B})$ associated with $(X,u)$ is just the \emph{$\tau_p$-product base space} from Definition \ref{AssocBsp}, which means $\tau_u=\tau_p$ and $\mathcal{B}=\mathcal{B}_p$. Consequently, $(X,u)$ also inherits the more general $\tau_{uc|_{\mathcal{Z}}}$-product base space structure $(X,\tau_{uc|_{\mathcal{Z}}},\mathcal{B}_{uc|_{\mathcal{Z}}})$ from Definition \ref{AssocBsp}, in which case the associated subbase of ball-like sets $\mathcal{S}\mathcal{B}_{uc|_{\mathcal{Z}}}$ (whose finite intersections give $\mathcal{B}_{uc|_{\mathcal{Z}}}$) is an update of $\mathcal{S}\mathcal{B}_{\textrm{ball}}$ from Remark \ref{BallTopRmk}. ({\bf footnote}\footnote{
Here, caution must be taken to avoid mistaking $Z=\prod_{y\in Y}Z_y$ for $Z\in\mathcal{Z}$ (which should now be changed to say $Z'\in\mathcal{Z}$).
})

For the special case where $X_y=(X_y,u_y):=(X_y,\mathcal{U}_y)$ is a uniform space, we have $Z_y=X_y\times X_y$, $u_y=id_{X_y\times X_y}$, and
\[
\varepsilon_y\in \mathcal{U}_y\subset\mathcal{P}(X_y\times X_y).
\]

\end{dfn}

\begin{lmm}\label{PtWsCompLmm}
Let $Y$ be a set, $\{X_y=(X_y,\tau_y,\mathcal{B}_y)\}_{y\in Y}$ a family of $lsb$-spaces, and $f_\alpha$, $g_\beta$ $\in$ $(\prod_y X_y,\tau_p,\mathcal{B})$ nets. Then $f_\alpha\rightarrowtail g_\beta$ if and only if $f_\alpha(y)\rightarrowtail g_\beta(y)$ in $X_y$ for each $y\in Y$.
\end{lmm}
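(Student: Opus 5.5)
The plan is to unwind Definition \ref{ApprchDfn} for the $\tau_p$-product base space of Definition \ref{AssocBsp} and prove each implication separately. Here a grade is $\varepsilon=\{\varepsilon_y\}_{y\in Y}\in\prod_y\mathcal{E}_y$. A member of $\mathcal{B}_{p,\varepsilon}$ is a finite intersection $\bigcap_{y\in F}[y,O_y]_p$ with $O_y\in\mathcal{B}_{y,\varepsilon_y}$. Throughout, write $f_\alpha$ ($\alpha\in A$) and $g_\beta$ ($\beta\in B$) for the two nets.

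\emph{($\Rightarrow$).} Fix $y_0\in Y$, a grade $\varepsilon_{y_0}\in\mathcal{E}_{y_0}$, and a homogeneous selection $\{O(g_\beta(y_0))\}_\beta\subset\mathcal{B}_{y_0,\varepsilon_{y_0}}$. Extend $\varepsilon_{y_0}$ arbitrarily to a full grade $\varepsilon$; this uses the axiom of choice together with each $\mathcal{E}_y\neq\emptyset$. Then $\{[y_0,O(g_\beta(y_0))]_p\}_\beta$ is a homogeneous selection in $\mathcal{B}_{p,\varepsilon}$ of neighborhoods of the points $g_\beta$. Applying $f_\bullet\rightarrowtail g_\bullet$ gives $\beta_0$ such that for each $\beta\geq\beta_0$ the set $[y_0,O(g_\beta(y_0))]_p$ contains a tail of $f_\bullet$. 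Projecting to the coordinate $y_0$, the set $O(g_\beta(y_0))$ then contains a tail of $f_\bullet(y_0)$, which is exactly $f_\bullet(y_0)\rightarrowtail g_\bullet(y_0)$.

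\emph{($\Leftarrow$).} Fix a grade $\varepsilon$ and a selection $P_\beta=\bigcap_{y\in F_\beta}[y,O^y_\beta]_p\ni g_\beta$ with $O^y_\beta\in\mathcal{B}_{y,\varepsilon_y}$. The difficulty is that the finite set $F_\beta$ varies with $\beta$, so there is no single finite set of coordinates over which to take a maximum of thresholds. I would deal with this by extending the selection: for every $y\in Y$ and every $\beta$, choose $O^y_\beta\in\mathcal{B}_{y,\varepsilon_y}$ containing $g_\beta(y)$, keeping the given sets when $y\in F_\beta$. This is possible because each $\mathcal{B}_{y,\varepsilon_y}$ covers $X_y$. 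Coordinatewise approach then gives, for each $y$, a threshold $\beta_y$ such that for $\beta\geq\beta_y$ the set $O^y_\beta$ contains a tail of $f_\bullet(y)$. For a fixed $\beta$, a finite intersection of such coordinate tails is again a tail of $f_\bullet$, since $A$ is directed and $F_\beta$ is finite. So $P_\beta$ contains a tail of $f_\bullet$ as soon as $\beta\geq\beta_y$ for every $y\in F_\beta$.

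\textbf{Main obstacle.} Producing one threshold $\beta_0$ that works uniformly, when infinitely many coordinates may occur across the sets $F_\beta$, is the step I expect to be hardest. I would argue by contradiction, using the negated form of approach from Definition \ref{ApprchDfn}: suppose a cofinal family of $\beta$ has $P_\beta$ excluding a subnet of $f_\bullet$.
\begin{itemize}
\item For each such $\beta$, some coordinate $y(\beta)\in F_\beta$ has $O^{y(\beta)}_\beta$ excluding a subnet of $f_\bullet(y(\beta))$, because a finite intersection of sets each containing a tail contains a tail.
\item If some coordinate $y^*$ recurs cofinally as $y(\beta)$, then $O^{y^*}_\beta$ gives a selection witnessing $f_\bullet(y^*)\not\rightarrowtail g_\bullet(y^*)$. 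This contradicts the hypothesis.
\item If no coordinate recurs cofinally, the sets $O^{y(\beta)}_\beta$ concern ever-new coordinates, and the contradiction must come from somewhere else. Here I would invoke local symmetry of the factors, as in Remark \ref{EqvNetRmk}. I would also re-choose the extended selection so that the coordinate thresholds are controlled, for instance by taking the $O^y_\beta$ coordinatewise so that the approach holds uniformly in $y$.
\end{itemize}
If this uniformity cannot be forced, I would fall back to proving the statement in the form the paper evidently intends: the grade-dependent threshold is determined coordinatewise, and the product is handled like the marginal argument in Lemma \ref{BcnvLimit}.
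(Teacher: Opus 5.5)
Your ($\Rightarrow$) argument is correct and matches the paper's (the paper runs it by contradiction, pulling a coordinate witness of non-approach back to subbasic sets $[y,O]_p$).

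The gap is in ($\Leftarrow$), and it is exactly the case you left open. Your first two bullets are sound. If a member of $\mathcal{B}_{p,\varepsilon}$ uses several subbasic factors at one coordinate, just pick one offending factor. Your second bullet is precisely what the paper's proof does: it asserts that a failure of $f_\alpha\rightarrowtail g_\beta$ is witnessed by neighborhoods $[y_o,O(g_{\beta_b}(y_o))]_p$ at a single coordinate $y_o$, which is unjustified in general.

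Your third bullet, where the offending coordinates $y(\beta)$ escape, cannot be completed. Local symmetry of the factors gives no uniformity in $y$. Nor can you ``re-choose'' anything: the selection $P_\beta$ is universally quantified in the definition of approach, and only the padding outside $F_\beta$ is yours, which is irrelevant to whether $P_\beta$ contains a tail of $f$. The fallback of proving ``the form the paper intends'' is not an argument, and the paper's proof has the same hole.

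In fact the implication is false in that case. Take $Y=\mathbb{N}$ and $X_n=\{0,1\}$ discrete with the single grade $\mathcal{B}_n=\{\{0\},\{1\}\}$. Here $u\rightarrowtail v$ iff $u$ and $v$ are both eventually constant with the same value, so each $X_n$ is even an $sb$-space. Let $f$ be the constant net at $\mathbf{0}$, and let $g_\beta$ be the indicator of $\{\beta\}$ for $\beta\in\mathbb{N}$.

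For each $n$ we have $g_\beta(n)=0$ whenever $\beta>n$, and the only $\mathcal{B}_n$-neighborhood of $0$ is $\{0\}$, so $f(n)\rightarrowtail g(n)$. However, $P_\beta:=[\beta,\{1\}]_p=\{h:h(\beta)=1\}$ is a subbasic member of $\mathcal{B}_{p,\varepsilon}$ (for the unique grade $\varepsilon$). It contains $g_\beta$ and never contains $\mathbf{0}$, so $f\not\rightarrowtail g$. Since also $g\rightarrowtail\{\mathbf{0}\}$, the same sets show that this product of $lsb$-spaces is not even an $lsb$-space.

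What your argument does prove is the case where the coordinates occurring in the selections stay within a finite set, e.g.\ $Y$ finite. There pigeonhole yields a cofinally recurring coordinate, and your second bullet finishes the proof.
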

\begin{proof}
($\Rightarrow$): Assume $f_\alpha\rightarrowtail g_\beta$. Suppose that for some $y\in Y$, $f_\alpha(y)\stackrel{}{\not\rightarrowtail} g_\beta(y)$ in $X_y$. Then there exist $\mathcal{B}_y$-neighborhoods
$O_{\varepsilon_{y,o}}(g_{\beta_b}(y))\in\mathcal{B}_{y,\varepsilon_{y,o}}$ that (for each $b$) exclude a subnet $f_{\alpha_{[a_{y,b},~]}}(y)$ of $f_\alpha(y)$, giving $\mathcal{B}$-neighborhoods $O_{\varepsilon_o}(g_{\beta_b}):=[y,O_{\varepsilon_{y,o}}(g_{\beta_b}(y))]_p\in\mathcal{B}_{\varepsilon_o}$ (where $\varepsilon_o(y):=\varepsilon_{y,o}$) in $\prod_yX_y$ that (for each $b$) exclude a subnet $f_{\alpha_{[a_{y,b},~]}}$ of $f_\alpha$ (a contradiction).

($\Leftarrow$): Assume $f_\alpha(y)\rightarrowtail g_\beta(y)$, in $X_y$, for all $y\in Y$. Suppose $f_\alpha\not\rightarrowtail g_\beta$. Then there exist $y_o\in Y$ and $\mathcal{B}$-neighborhoods
$O_{\varepsilon_o}(g_{\beta_b}):=[y_o,O_{\varepsilon_o(y_o)}(g_{\beta_b}(y_o))]_p\in\mathcal{B}_{\varepsilon_o}$ in $\prod_yX_y$ that (for each $b$) exclude a subnet $f_{\alpha_{[a_{y_o,b},~]}}$ of $f_\alpha$ (i.e., $f_{\alpha_{[a_{y_o,b},~]}}(y_o)\subset X_{y_o}\backslash O_{\varepsilon_o(y_o)}(g_{\beta_b}(y_o))$). Since $f_\alpha(y_o)\rightarrowtail g_\beta(y_o)$, for every $O_{\varepsilon_{y_o}}(g_\beta(y_o))\in\mathcal{B}_{y_o,\varepsilon_{y_o}}$, there exists $\beta^{\{O_{\varepsilon_{y_o}}(g_\beta(y_o))\}_\beta}$ such that for all $\beta\geq \beta^{\{O_{\varepsilon_{y_o}}(g_\beta(y_o))\}_\beta}$, $O_{\varepsilon_{y_o}}(g_\beta(y_o))$ contains a tail of $f_\alpha(y_o)$. So, with $\varepsilon_{y_o}:=\varepsilon_o(y_o)$ and $\beta:=\beta_b$, we get a contradiction.
\end{proof}

\begin{thm}\label{PtWsCompThm}
Let $\{(X_y,\tau_y,\mathcal{B}_y))\}_{y\in Y}$, $(X,\tau_X,\mathcal{B}_X)$, and $(X^Y,\tau_p,\mathcal{B})$ be $lsb$-spaces.

(i) A net in $(X^Y,\tau_p,\mathcal{B})$ is cauchy if and only if pointwise cauchy.

(ii) $(\prod_yX_y,\tau_p,\mathcal{B})$ is complete if and only if each $X_y=(X_y,\tau_y,\mathcal{B}_y)$ is complete.

(iii) $(\widetilde{\prod X_y},\tau_p,\mathcal{B})\cong\prod(\widetilde{X_y},\tau_y,\mathcal{B}_y)$,~ where $\widetilde{X}$ denotes the completion of $X$.

(iv) $(\prod_yX_y,\tau_p,\mathcal{B})$ is precompact if and only if each $X_y=(X_y,\tau_y,\mathcal{B}_y)$ is precompact.
\end{thm}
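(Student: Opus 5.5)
The whole theorem rests on Lemma \ref{PtWsCompLmm}, which says that approach in $(\prod_yX_y,\tau_p,\mathcal{B})$ is coordinatewise approach. It also uses the fact that the coordinate maps $f\mapsto f(y)$ carry subnets to subnets. Every subnet of $\{f_\alpha(y)\}$ arises as the $y$-coordinate of the subnet $f_{\alpha\circ\phi}$ with the same cofinal map $\phi$. The four parts then follow in order, with (i) feeding (ii) and (iv), and (ii) feeding (iii).

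\textbf{Part (i).} Let $f_\alpha$ be a net in $(X^Y,\tau_p,\mathcal{B})$, or in a general product. By definition, $f_\alpha$ is cauchy iff $f_{\alpha\circ\phi}\rightarrowtail f_{\alpha\circ\psi}$ for all cofinal $\phi,\psi$. By Lemma \ref{PtWsCompLmm}, this holds iff $f_{\alpha\circ\phi}(y)\rightarrowtail f_{\alpha\circ\psi}(y)$ for every $y$ and all such $\phi,\psi$. This is exactly cauchyness of each $f_\alpha(y)$, since the subnets of $f_\alpha(y)$ given by the same cofinal maps are all its subnets. Convergence is the special case of approach to a constant net. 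So the same lemma gives $f_\alpha\rightarrow f$ iff $f_\alpha(y)\rightarrow f(y)$ for all $y$, which is the usual product convergence.

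\textbf{Part (ii).} For ($\Leftarrow$), take a cauchy net $f_\alpha$ in the product. By (i), each $f_\alpha(y)$ is cauchy, so it converges to some $x_y\in X_y$ by completeness of $X_y$. The selection $f(y):=x_y$ then satisfies $f_\alpha\rightarrow f$ by the coordinatewise criterion. For ($\Rightarrow$), assume the product is complete and nonempty; each $X_y$ is nonempty, else trivial. Fix $y_0$ and a point $g\in\prod_yX_y$. Given a cauchy net $s$ in $X_{y_0}$, form $f_\alpha$ equal to $s_\alpha$ at $y_0$ and $g(y)$ elsewhere. Constant nets are cauchy because they converge in an $lsb$-space. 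Hence $f_\alpha$ is pointwise cauchy, so cauchy by (i), so convergent, and projecting gives convergence of $s$.

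\textbf{Part (iii).} Let $h_y:X_y\hookrightarrow\widetilde{X_y}$ be the embeddings from Theorem \ref{BSpCmplThm}. I would show that $\prod_y\widetilde{X_y}$, with its $\tau_p$-product base, is a completion of $\prod_yX_y$ via $h=\prod_yh_y$; uniqueness in Theorem \ref{BSpCmplThm} then gives the isomorphism. Completeness follows from (ii). Density holds because basic $\tau_p$-open sets restrict only finitely many coordinates and each $h_y(X_y)$ is dense. That $h$ is a (bi)uniform embedding follows from Lemma \ref{PtWsCompLmm} applied on both sides together with coordinatewise biuniformity of the $h_y$. The Hausdorff hypothesis is needed to invoke uniqueness, so I would state it as an assumption on the factors.

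\textbf{Part (iv).} The direction ($\Rightarrow$) is the projection argument of (ii): embed a net of $X_{y_0}$ along a fixed point, extract a cauchy subnet, and project it. The direction ($\Leftarrow$) is the main obstacle, because one must find a single subnet that is cauchy in every coordinate simultaneously, for possibly infinitely many coordinates. I would first try the completion route. By the Corollary after Theorem \ref{BSpCmplThm}, each $X_y$ has a compact completion $\widetilde{X_y}$. By Tychonoff, $\prod_y\widetilde{X_y}$ is compact, so any net in $\prod_yX_y$ has a subnet converging there. That subnet is cauchy in $\prod_y\widetilde{X_y}$, hence pointwise cauchy in each $\widetilde{X_y}$ by (i). Since $h_y$ is a biuniform embedding, it is pointwise cauchy in $X_y$, and so cauchy in $\prod_yX_y$ by (i) again. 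If the Hausdorff requirement of the completion theorem is an issue, the alternative is a universal-subnet argument: pass to a universal subnet and show that in a precompact $lsb$-space a universal net is cauchy, using Corollary \ref{TailExcCrl}.
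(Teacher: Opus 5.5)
Your proposal follows the paper's outline. You use Lemma~\ref{PtWsCompLmm} for (i), coordinatewise limits for (ii), and the dense (bi)uniform embedding $\prod_yX_y\hookrightarrow\prod_y\widetilde{X_y}$ plus uniqueness of completions for (iii). For (iv) you use compact completions of the factors together with Tychonoff.

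Where you deviate, the deviation helps you:
\begin{itemize}
\item In (ii)($\Rightarrow$) and (iv)($\Rightarrow$) you push a net of $X_{y_0}$ into the product along a fixed point and project back. The paper instead asserts that each $X_y$ is a \emph{closed} subspace of the product and appeals to Remark~\ref{BasicResRmk}(5). Your version does not need the slice to be closed, which would require points of the other factors to be closed. In (iv)($\Rightarrow$) it also needs neither completions nor Hausdorffness.
\item In (iv)($\Leftarrow$) you take a convergent subnet in the compact space $\prod_y\widetilde{X_y}$ and pull it back coordinatewise. The paper runs the chain ``$\prod_yX_y$ precompact $\iff$ it has a compact completion $\iff$ (by (iii)) $\prod_y\widetilde{X_y}$ compact''. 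You need the Corollary after Theorem~\ref{BSpCmplThm} only for the factors, and you never use the uniqueness part of (iii).
\end{itemize}

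One caution applies equally to the paper's proof. You use (i) ``in a general product'', and its `if' direction is exactly the `$\Leftarrow$' half of Lemma~\ref{PtWsCompLmm}. That half fails for infinite products of arbitrary $lsb$-spaces. The sets in $\mathcal{B}_{p,\varepsilon}$ chosen at the points $g_\beta$ may constrain a different coordinate for each $\beta$, so the per-coordinate thresholds need not have a common bound. The paper's proof of that half silently assumes the failure is witnessed at a single coordinate $y_o$.

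For example, take $[0,1]^{\mathbb{N}}$ with the metric grading. Let $g_n$ be the indicator function of $\{n\}$, and at $g_n$ choose $[n,B(1,\tfrac12)]_p\in\mathcal{B}_{p,\varepsilon}$, where $\varepsilon_y=\tfrac12$ for all $y$. Then:
\begin{itemize}
\item the constant net $0$ approaches $g_\bullet$ in every coordinate, but not in the product;
\item $g_\bullet\to 0$ in $\tau_p$, yet $\{0\}\not\rightarrowtail g_\bullet$, so the product is not $lsb$;
\item no subnet of $g_\bullet$ is cauchy, even though every factor is compact.
\end{itemize}

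Consequently, the last step of your (iv)($\Leftarrow$) and the biuniformity of $\prod_yh_y$ in (iii) require $\prod_yX_y$ and $\prod_y\widetilde{X_y}$ themselves to be $lsb$-spaces. State this explicitly; it is evidently what the hypothesis on $(X^Y,\tau_p,\mathcal{B})$ is meant to provide. You need it anyway to invoke the uniqueness in Theorem~\ref{BSpCmplThm}.

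Under that assumption the `$\Leftarrow$' half is valid:
\begin{itemize}
\item For nonempty factors, $lsb$-ness of the product forces all but finitely many factors to have no basic sets other than $\emptyset$ and the whole factor.
\item At each remaining coordinate $y$ and each $\beta$, pick among the constraints of $O_\varepsilon(g_\beta)$ at $y$ one that contains no tail of $f_\bullet(y)$, if such exists.
\item Coordinatewise approach then gives a threshold at each of these finitely many coordinates; take a common upper bound.
\end{itemize}
Your (ii)($\Rightarrow$) is unaffected, since the slice net varies in only one coordinate.
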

\begin{proof}
(i)-(iii) are immediate (by Lemma \ref{PtWsCompLmm}, the fact each $X_y$ is a closed subspace of $(\prod_yX_y,\tau_p,\mathcal{B})$, and the fact we have a dense uniform embedding $\prod_yX_y\hookrightarrow\prod_y\widetilde{X_y}$, as $\overline{\prod_yX_y}=\prod_y\overline{X_y}$ in $\prod_y\widetilde{X_y}$). So we prove (iv): By Tychonoff's product theorem, $(\prod_yX_y,\tau_p,\mathcal{B})$ is compact if and only of each $X_y=(X_y,\tau_y,\mathcal{B}_y)$ is compact. Also, $(\prod_yX_y,\tau_p,\mathcal{B})$ is precompact if and only if it has a compact completion (if and only if, by (iii), each $(X_y,\tau_y,\mathcal{B}_y)$ has a compact completion.)
\end{proof}

\begin{lmm}\label{FxnNetApp1}
Let $Y$ be a set, $(X,\tau_X,\mathcal{B}_X)$ and $(X^Y,\tau_{uc},\mathcal{B})$ $lsb$-spaces, and $f_\alpha,g_\beta\in (X^Y,\tau_{uc},\mathcal{B})$ nets. If $f_\alpha\rightarrowtail g_\beta$, then $f_\alpha(y)\rightarrowtail g_\beta(y)$ in $X$, for all $y\in Y$.
\end{lmm}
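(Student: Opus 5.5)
The plan is to argue by contraposition, mirroring the ($\Rightarrow$) direction of Lemma \ref{PtWsCompLmm}. The only new ingredient is that the base $\mathcal{B}=\mathcal{B}_{uc}$ of $(X^Y,\tau_{uc})$ is generated by the subbase covers $\mathcal{S}\mathcal{B}_{uc,\varepsilon}=\{[Z,O]_{uc}:Z\in\mathcal{Z},~O\in\prod_{y\in Y}\mathcal{B}_{X,\varepsilon_y}\}$. Since $\mathcal{Z}=\{Y\}$ or $\mathcal{P}(Y)$, the singletons $\{y\}$ are available as members of $\mathcal{Z}$, and these are the sets we will use. I would work with $\mathcal{Z}=\mathcal{P}(Y)$, which by Definition \ref{TpUnfCnvDfn} gives the same topology $\tau_{uc}$. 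If one insists on $\mathcal{Z}=\{Y\}$, we instead take $O$ with $O_{y'}=X$ for $y'\neq y$; this is possible when $X\in\mathcal{B}_{X,\varepsilon}$, and otherwise we use the $\mathcal{P}(Y)$ grading.

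\textbf{Step 1.} Fix $y_o\in Y$ and suppose $f_\alpha(y_o)\not\rightarrowtail g_\beta(y_o)$ in $X$. By the negation form in Definition \ref{ApprchDfn}, there exist $\varepsilon_o\in\mathcal{E}_X$, a subnet $g_{\beta_b}$ of $g_\beta$, and sets $O_{\varepsilon_o}(g_{\beta_b}(y_o))\in\mathcal{B}_{X,\varepsilon_o}$, each containing $g_{\beta_b}(y_o)$ and each excluding a subnet of $f_\alpha(y_o)$.

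\textbf{Step 2.} Lift these sets to $X^Y$. Let $\varepsilon\in\prod_y\mathcal{E}_X$ be the constant grading index $\varepsilon_y:=\varepsilon_o$. For each $b$, choose $O^b\in\prod_y\mathcal{B}_{X,\varepsilon_o}$ with $O^b_{y_o}:=O_{\varepsilon_o}(g_{\beta_b}(y_o))$, and for $y\neq y_o$ let $O^b_y$ be any $\mathcal{B}_{X,\varepsilon_o}$-set containing $g_{\beta_b}(y)$; such a set exists because $\mathcal{B}_{X,\varepsilon_o}$ is a cover. Then $P_b:=[\{y_o\},O^b]_{uc}\in\mathcal{S}\mathcal{B}_{uc,\varepsilon}\subset\mathcal{B}_\varepsilon$, and $g_{\beta_b}\in P_b$. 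This gives a homogeneous selection $\mathcal{O}_\varepsilon(g\circ\psi_o)$ along a subnet of $g_\beta$.

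\textbf{Step 3.} If $f_{\alpha_\gamma}(y_o)\notin O^b_{y_o}$ along a subnet of $f_\alpha(y_o)$, then the corresponding subnet $f_{\alpha_\gamma}$ lies outside $P_b$. So each $P_b$ excludes a subnet of $f_\alpha$, and by the equivalent negation in Definition \ref{ApprchDfn}, $f_\alpha\not\rightarrowtail g_\beta$, which contradicts the hypothesis. The $lsb$ hypothesis is not really needed.

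The main obstacle is Step 2, namely checking that a set constraining only the coordinate $y_o$ is a legitimate member of the homogeneous cover $\mathcal{B}_\varepsilon$, and that this single set does not depend on $b$ through $\varepsilon$. Grading by the constant family $\varepsilon_y=\varepsilon_o$ handles the second point. The first point is why I would pass to $\mathcal{Z}=\mathcal{P}(Y)$ or use full-space coordinates, as noted at the start.
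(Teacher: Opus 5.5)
Your argument is essentially the paper's. The paper notes $\tau_p\subset\tau_{uc}$ and reruns the forward direction of Lemma \ref{PtWsCompLmm}: it lifts the excluding neighborhoods $O_{\varepsilon_o}(g_{\beta_b}(y_o))$ to cylinder sets that exclude subnets of $f_\alpha$. Your Steps 1--3 do exactly this. You are also more careful than the paper about whether the lifted sets lie in the graded cover $\mathcal{B}_\varepsilon$, since openness in $\tau_{uc}$ alone does not make a set usable in the approach relation.

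Your fallback ``otherwise we use the $\mathcal{P}(Y)$ grading'' is not legitimate, though. Approach is defined relative to the fixed grading, so a hypothesis $f_\alpha\rightarrowtail g_\beta$ for the $\{Y\}$-grading does not transfer to the $\mathcal{P}(Y)$-grading.

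The fallback is also unnecessary, so the ``main obstacle'' you identify is not really one. Take $O^b$ exactly as you built it in Step 2, with every coordinate from $\mathcal{B}_{X,\varepsilon_o}$ and containing $g_{\beta_b}(y)$. Then the set $[Y,O^b]_{uc}$ has three properties:
\begin{itemize}
\item it lies in $\mathcal{S}\mathcal{B}_{uc,\varepsilon}$ for $\mathcal{Z}=\{Y\}$;
\item it contains $g_{\beta_b}$;
\item it is contained in $[\{y_o\},O^b]_{uc}$, so it excludes the same subnet of $f_\alpha$.
\end{itemize}
Shrinking a set can only make exclusion easier. So the proof goes through for either grading, without needing $X\in\mathcal{B}_{X,\varepsilon}$.
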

\begin{proof}
Recall that $\tau_p\subset\tau_{uc}$. Therefore, the rest is just the forward direction $(\Rightarrow)$ of the proof of Lemma \ref{PtWsCompLmm}.

\begin{comment}
Assume $f_\alpha\rightarrowtail g_\beta$. Suppose that for some $y\in Y$, $f_\alpha(y)\stackrel{}{\not\rightarrowtail} g_\beta(y)$ in $X$. Then there exist $\mathcal{B}_X$-neighborhoods
$O_{\varepsilon_o}(g_{\beta_b}(y))\in\mathcal{B}_{X,\varepsilon_o}$ that (for each $b$) exclude a subnet $f_{\alpha_{[a_{y,b},~]}}(y)$ of $f_\alpha(y)$, giving $\mathcal{B}$-neighborhoods $O_\varepsilon(g_{\beta_b}):=[y,O_{\varepsilon_o}(g_{\beta_b}(y))]_p\in\mathcal{B}_{\varepsilon_o}$ in $X^Y$ that (for each $b$) exclude a subnet $f_{\alpha_{[a_{y,b},~]}}$ of $f_\alpha$ (a contradiction).
\end{comment}
\end{proof}

\begin{comment}
\begin{proof}
(i)-(iii) are immediate (by Lemma \ref{PtWsCompLmm}, the fact each $X_y$ is a closed subspace of $(\prod_yX_y,\tau_p,\mathcal{B})$, and the fact we have a dense uniform embedding $\prod_yX_y\hookrightarrow\prod_y\widetilde{X_y}$, as $\overline{\prod_yX_y}=\prod_y\overline{X_y}$ in $\prod_y\widetilde{X_y}$). So we prove (iv): By Tychonoff's product theorem, $(\prod_yX_y,\tau_p,\mathcal{B})$ is compact if and only of each $X_y=(X_y,\tau_y,\mathcal{B}_y)$ is compact. Also, $(\prod_yX_y,\tau_p,\mathcal{B})$ is precompact if and only if it has a compact completion (if and only if, by (iii), each $(X_y,\tau_y,\mathcal{B}_y)$ has a compact completion.)
\end{proof}
\end{comment}

\begin{crl}\label{PtWTcchCrl}
Let $Y$ be a set and $(X,\tau_X,\mathcal{B}_X$) and $(X^Y,\tau_{uc},\mathcal{B})$ $lsb$-spaces. If a net in $(X^Y,\tau_{uc},\mathcal{B})$ is cauchy (resp., convergent), then it is pointwise cauchy (resp., pointwise convergent).
\end{crl}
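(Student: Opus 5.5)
The plan is to derive Corollary \ref{PtWTcchCrl} directly from Lemma \ref{FxnNetApp1}, which says that approach of nets in $(X^Y,\tau_{uc},\mathcal{B})$ implies pointwise approach in $X$. Both the cauchy case and the convergent case are statements about approach between certain nets, evaluated coordinatewise.

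\emph{Cauchy case.} Let $f_\alpha$ be cauchy in $(X^Y,\tau_{uc},\mathcal{B})$ and fix $y\in Y$. We must show that any two subnets of $f_\alpha(y)$ approach each other.
\begin{enumerate}[leftmargin=0.7cm]
\item Every subnet of $f_\alpha(y)$ has the form $f_{\phi(\cdot)}(y)=(f\circ\phi)(y)$ for a cofinal map $\phi$. This holds because evaluation at $y$ commutes with precomposition by $\phi$.
\item Given two subnets $f\circ\phi$ and $f\circ\psi$, cauchyness of $f_\alpha$ gives $f\circ\phi\rightarrowtail f\circ\psi$ in $X^Y$.
\item Lemma \ref{FxnNetApp1} then yields $(f\circ\phi)(y)\rightarrowtail(f\circ\psi)(y)$ in $X$.
\end{enumerate}
Hence $f_\alpha(y)$ is cauchy for every $y$, i.e., $f_\alpha$ is pointwise cauchy.

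\emph{Convergent case.} Suppose $f_\alpha\to f$ in $\tau_{uc}$. By Definition \ref{CovNetDfn}, this means $f_\alpha\rightarrowtail\{f\}$, where $\{f\}$ is the constant net. Lemma \ref{FxnNetApp1} gives $f_\alpha(y)\rightarrowtail\{f(y)\}$ for each $y$, since the constant net $\{f\}$ evaluated at $y$ is the constant net $\{f(y)\}$. So $f_\alpha(y)\to f(y)$ in $X$, which is pointwise convergence.

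Alternatively, convergence is purely topological here, so one could use $\tau_p\subset\tau_{uc}$ together with continuity of the evaluation maps $f\mapsto f(y)$. The approach-based route above is more in keeping with the paper.

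\emph{Main obstacle.} The only point needing care is step 1 of the cauchy case: confirming that the subnets of the evaluated net $f_\alpha(y)$ are exactly the evaluations of subnets of $f_\alpha$. Only the inclusion ``every subnet of $f_\alpha(y)$ comes from a subnet of $f_\alpha$'' is needed, and it is immediate since a subnet is a precomposition with a cofinal map of the same index set.
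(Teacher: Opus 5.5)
Your proposal is correct and matches the paper's intended argument. The paper gives no separate proof and states this as an immediate corollary of Lemma \ref{FxnNetApp1}, applied to pairs of subnets for the cauchy case and to the constant net $\{f\}$ for the convergent case, with the commutation of evaluation and subnets as the only bookkeeping.
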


\begin{thm}[{Completeness of $\tau_{uc}$}]\label{UnfCmplThm}
If $Y$ is a set and $X=(X,\tau_X,\mathcal{B}_X)$ a complete $lsb$-space, then $(X^Y,\tau_{uc},\mathcal{B})$ is a complete $lsb$-space. (The converse also holds if $X$ is Hausdorff, i.e., if $X=(X,\tau_X)$ is Hausdorff, then $X=(X,\tau_X,\mathcal{B}_X)$ is complete iff $(X^Y,\tau_{uc},\mathcal{B})$ is complete.)
\end{thm}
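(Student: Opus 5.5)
The plan follows the classical argument for uniform limits: first get a pointwise candidate limit, then upgrade pointwise convergence to uniform convergence using cauchyness. Let $f_\alpha$ be a cauchy net in $(X^Y,\tau_{uc},\mathcal{B})$.

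\textbf{Step 1 (pointwise limit).} By Corollary \ref{PtWTcchCrl}, $f_\alpha(y)$ is cauchy in $X$ for each $y\in Y$. Since $X$ is complete, pick a limit $f(y)$ of $f_\alpha(y)$ for each $y$ (using choice when $X$ is not Hausdorff). This defines the candidate limit $f\in X^Y$.

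\textbf{Step 2 (upgrade to uniform convergence).} The goal is to show $f_\alpha\rightarrowtail\{f\}$ in $(X^Y,\tau_{uc},\mathcal{B})$, i.e.\ that every $\mathcal{B}_\varepsilon$-neighborhood of $f$ contains a tail of $f_\alpha$. By the form of the subbase $[Y,O]_{uc}$, it suffices to take a homogeneous family $\{O_\varepsilon(f(y))\ni f(y)\}_{y\in Y}\subset\mathcal{B}_{X,\varepsilon}$ and find one index $\alpha_0$ with $f_\alpha(y)\in O_\varepsilon(f(y))$ for all $\alpha\geq\alpha_0$ and all $y$. I would use the marginal convergence Lemma \ref{BcnvLimit}, applied to the cauchy relation $f_\bullet\rightarrowtail f_\bullet$ viewed as a relation between bi-nets. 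Concretely, form the bi-net $u_{\alpha\beta}:=f_\alpha$ and the bi-net $v_{\alpha\beta}:=f_\beta$. Along the $\beta$-margin, $v$ tends to $f$ pointwise. Because approach in $\tau_{uc}$ is defined with a \emph{single} index $j^{\mathcal{O}_\varepsilon}$ that is uniform over the whole homogeneous family, the threshold obtained from cauchyness does not depend on $y$. Passing to the limit in $\beta$ pointwise at each $y$, via local symmetry of $X$ (Remark \ref{EqvNetRmk}(3)) and transitivity (Lemma \ref{NetAppHT}), then gives a $y$-independent tail of $f_\alpha$ inside $\prod_y O_\varepsilon(f(y))$. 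Membership $f\in X^Y$ is automatic, so $f_\alpha\to f$ in $\tau_{uc}$, and $X^Y$ is complete.

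\textbf{Step 3 (converse, $X$ Hausdorff).} Identify $X$ with the constant maps $c_x:y\mapsto x$. For constant maps, a $\tau_{uc}$ neighborhood $[Y,\{O\}]$ meets them exactly as $O$ meets $X$, so a cauchy net $x_\alpha$ in $X$ yields a cauchy net $c_{x_\alpha}$ in $X^Y$. This net converges to some $g$. By Corollary \ref{PtWTcchCrl}, $x_\alpha\to g(y)$ for each $y$. Hausdorffness forces $g(y)$ to be independent of $y$ (and in any case any one value serves), so $x_\alpha$ converges in $X$.

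\textbf{Main obstacle.} The hard part is Step 2: pointwise limits of the cauchy thresholds must stay uniform in $y$. The neighborhoods $O_\varepsilon(f(y))$ are centered at the limit $f(y)$ rather than at the $f_\beta(y)$, so a single tail index must be transferred across a pointwise limit. Doing this needs a triangle-like step, which in this framework has to come from local symmetry together with Lemma \ref{BcnvLimit} rather than from a metric. If that transfer fails in general, the fallback is to use that $\tau_p\subset\tau_{uc}$ and to work with subbasic neighborhoods one at a time, bounding the tail uniformly via the homogeneity of the selection.
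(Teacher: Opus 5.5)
Your Step~2 is the whole content of the theorem, and as written it describes the hoped-for outcome rather than giving an argument. The tools you name cannot supply it.

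Lemma~\ref{BcnvLimit} only transports approach along marginal convergences that hold in the space where the conclusion is drawn. However you index your bi-nets, one margin that must converge in $(X^Y,\tau_{uc},\mathcal{B})$ is $f_\bullet$ itself tending to $f$, which is the conclusion; pointwise convergence is not convergence there. Applied in $X$ one coordinate at a time, the lemma, local symmetry and transitivity return only $f_\bullet(y)\rightarrowtail f(y)$ and $\{f(y)\}\rightarrowtail f_\bullet(y)$, with $y$-dependent thresholds. Moreover, local symmetry controls basic sets around $f_\alpha(y)$ that must contain $f(y)$, which is the reverse of what you need.

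The single threshold coming from homogeneous families does not rescue this. Let $X=\{x_0\}\cup\{z_n:n\geq1\}$ with $z_n\to x_0$. Grade it by $\mathcal{B}_{X,0}=\{O_m\}_{m\geq1}$, where $O_m=\{x_0\}\cup\{z_l:l\geq m\}$, and by $\mathcal{B}_{X,k}=\{\{z_1\},\dots,\{z_k\},O_{k+1}\}$ for $k\geq1$. This is a compact, hence complete, $lsb$-space. In $X^{\mathbb{N}}$ put $f_n(y)=z_{\max(n,y)}$.
\begin{itemize}
\item For $n>k$ (every $n$ if $k=0$), each finite intersection of boxes $\prod_yO_y$ with all $O_y\in\mathcal{B}_{X,k}$ that contains $f_n$ contains every $f_m$ with $m\geq n$.
\item So $f_\bullet$ passes the cauchy test against all homogeneous selections and converges pointwise to $f\equiv x_0$.
\item Yet no $f_n$ lies in the homogeneous box $\prod_yO_{2y}\ni f$.
\end{itemize}
So a proof must use that $\mathcal{B}$ in Definition~\ref{AssocBsp} is graded by $\varepsilon\in\mathcal{E}^Y$, i.e.\ the grade may change with $y$. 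This is needed both in the cauchy hypothesis and for the neighbourhoods of $f$ to be reached. Indeed, taking $\mathcal{B}_{X,y}$ at coordinate $y$ forces the factor $\{z_n\}$ at coordinate $n$, and then this $f_\bullet$ is not cauchy.

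The consequence of local symmetry that is actually usable is that every member of $\mathcal{B}_X$ is closed. If $x\in\overline{O}\setminus O$ with $O\in\mathcal{B}_{X,\varepsilon}$, a net in $O$ converging to $x$, with the constant selection $O$, violates local symmetry. With these two facts sequences can be handled. If $f_n\notin\prod_yO_y\ni f$ for infinitely many $n$, each coordinate is bad only finitely often. At each bad coordinate $y$, take its last bad index $n_y$ and choose a basic set around $f_{n_y}(y)$ disjoint from the clopen $O_y$, using its grade at coordinate $y$. This gives a selection violating $f_\bullet\rightarrowtail f_\bullet$. For general nets this one-grade-per-coordinate bookkeeping still needs an argument.

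For comparison, the paper shares your Step~1 and then argues by contradiction. By Lemma~\ref{TailExcLmm} some $N(f)\in\mathcal{B}$ excludes a whole tail $f_{[\alpha_0,~]}$, and pointwise convergence is supposed to give $f_{[\alpha_y,~]}\cap N_y(f)\neq\emptyset$. Since $\alpha_y\geq\alpha_0$ and $N_y(f)\subset N(f)$, that assertion contradicts the exclusion it starts from. So the paper's route does not supply the missing uniformity either, and copying it will not close your gap.

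Step~3 has a separate gap. A member of $\mathcal{B}$ containing $c_x$ is a finite intersection of boxes whose factors $O_y\ni x$ (and their grades) vary with $y$. Its trace on the constants is therefore $\{c_z:z\in\bigcap_yO_y\}$, which is not a member of $\mathcal{B}_X$. Cauchyness of $x_\alpha$ controls each $O_y$ separately, not the infinite intersection. In the example above, using $\mathcal{B}_{X,y}$ at coordinate $y$ shows that $c_{z_n}$ is not cauchy although $z_n\to x_0$. So the identification $X\cong Const(Y,X)$, on which the paper's converse also rests, fails as well.

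Embedding $X$ along one coordinate $y_0$, with the other coordinates frozen at a fixed point, avoids this. Each basic set then imposes only finitely many conditions at $y_0$. A selection that picks a failing factor whenever one exists reduces these to the cauchy condition in $X$. Corollary~\ref{PtWTcchCrl} then gives the converse, even without Hausdorffness.

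Finally, neither you nor the paper verifies that $(X^Y,\tau_{uc},\mathcal{B})$ is an $lsb$-space. That is part of the statement and also a hypothesis of Corollary~\ref{PtWTcchCrl}.
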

\begin{proof}
Let $f_\alpha\in (X^Y,\tau_{uc})$ be cauchy. Then, by Corollary \ref{PtWTcchCrl}, $f_\alpha(y)$ is cauchy in $X$, for all $y\in Y$. For each $y\in Y$, let $f_\alpha(y)\rightarrow f(y)$. Then $f_\alpha\stackrel{u}{\longrightarrow} f$ in $X^Y$. Otherwise, suppose $f_\alpha\stackrel{u}{\not\rightarrow} f$ in $X^Y$, i.e., by cauchyness, some $\mathcal{B}$-neighborhood $N(f)$ of $f$ excludes a tail $f_{[\alpha_0,~]}$ of $f_\alpha$. However, for each $y\in Y$, we can pick a $\mathcal{B}_X$-neighborhood $N(f(y))$, of $f(y)$, and $\alpha_y\geq\alpha_0$ such that $f_{[\alpha_y,~]}(y)\subset N(f(y))$, in which case
\[
f_{[\alpha_y,~]}\cap N_y(f)\neq\emptyset,
\]
where ~$N_y(f):=N(f)\cap[y,N(f(y))]_p=\{h\in N(f):h(y)\in N(f(y))\}$.

So, we can pick elements of $f_{[\alpha_0,~]}$ of the form $f_{\alpha_{a(y)}}\in N_y(f)\subset N(f)$, a contradiction. NB: Since $\tau_p\subset\tau_{uc}$, we have $N_y(f)\in\tau_{uc}$, and so it is not essential to know the explicit form of $N(f)$.

(The converse, for a Hausdorff $X$, follows because the subspace $X\cong Const(Y,X)\subset (X^Y,\tau_{uc})$ of constant maps is closed. Indeed, let $c_x:Y\rightarrow X,~y\mapsto x$ and pick a net $c_{x_\alpha}\in Const(Y,X)$ such that $c_{x_\alpha}\stackrel{u}{\longrightarrow} f\in X^Y$ (which implies $c_{x_\alpha}\stackrel{\tau_p}{\longrightarrow} f$ since $\tau_p\subset\tau_{uc}$). Then for any family of open sets $\mathcal{O}_\varepsilon=\{O_y\ni f(y)\}_{y\in Y}\subset \mathcal{B}_{X,\varepsilon}$ in $X$, there exists $\alpha^{\mathcal{O}_\varepsilon}$ such that $c_{x_{[\alpha^{\mathcal{O}_\varepsilon},~]}}(y)=x_{[\alpha^{\mathcal{O}_\varepsilon},~]}\subset O_y$ for all $y\in Y$, i.e., $x_\alpha\rightarrow f(y)$ for all $y\in Y$, and so $f$ must be constant. Although not of immediate relevance to us, it is also clear that $X\cong Const(Y,X)\subset (X^Y,\tau_p)$ is closed.)
\end{proof}

\begin{lmm}\label{ContUlmLmm} Lemma \ref{ContUlmLmm}
Let $Y=(Y,\tau_Y,\mathcal{B}_Y)$ be a $lsb$-space, $X=(X,\tau_X,\mathcal{B}_X)$ a complete $lsb$-space, and $f\in X^Y$.

(i) Let $f_\bullet\in C(Y,X)\subset (X^Y,\tau_{uc},\mathcal{B})$ such that $f_\bullet\stackrel{u}{\longrightarrow}f$. If $y_\bullet\rightarrow y_0$ in $Y$, then  $f_\bullet(y_\bullet)\rightarrowtail f_\bullet(y_0)$ in $X$.

(ii) Let $f_\bullet\in UC(Y,X)\subset (X^Y,\tau_{uc},\mathcal{B})$ such that $f_\bullet\stackrel{u}{\longrightarrow}f$. If $y_\bullet\rightarrowtail\widetilde{y}_\bullet$ in $Y$, then  $f_\bullet(y_\bullet)\rightarrowtail f_\bullet(\widetilde{y}_\bullet)$ in $X$.
\end{lmm}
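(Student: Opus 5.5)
We read $f_\bullet(y_\bullet)$ as the bi-net $(\alpha,i)\mapsto f_\alpha(y_i)$ and $f_\bullet(y_0)$ as the net $\alpha\mapsto f_\alpha(y_0)$. The plan is to pass through the limit map $f$ and to use only transitivity (Lemma \ref{NetAppHT}), local symmetry, and marginal convergence (Lemma \ref{BcnvLimit}). The chain we aim for in (i) is
\[
f_\bullet(y_\bullet)\rightarrowtail f(y_\bullet)\rightarrowtail f(y_0)\rightarrowtail f_\bullet(y_0),
\]
after which transitivity gives $f_\bullet(y_\bullet)\rightarrowtail f_\bullet(y_0)$.

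\textbf{Step 1 (first and last links).} For the first link, fix a homogeneous selection $\mathcal{O}_\varepsilon=\{O_\varepsilon(f(y))\ni f(y)\}_{y\in Y}\subset\mathcal{B}_{X,\varepsilon}$. By $f_\bullet\stackrel{u}{\longrightarrow}f$ there is a single $\alpha^{\mathcal{O}_\varepsilon}$ with $f_\alpha(y)\in O_\varepsilon(f(y))$ for all $\alpha\ge\alpha^{\mathcal{O}_\varepsilon}$ and all $y$. Evaluating at $y=y_i$ shows that each $O_\varepsilon(f(y_i))$ contains a tail of the bi-net. Because this threshold does not depend on $y$, we get $f_\bullet(y_\bullet)\rightarrowtail f(y_\bullet)$, with $f(y_\bullet)$ viewed as a bi-net constant in $\alpha$. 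For the last link, uniform convergence implies pointwise convergence, so $f_\bullet(y_0)\rightarrow f(y_0)$. Local symmetry of $X$ then gives $\{f(y_0)\}\rightarrowtail f_\bullet(y_0)$.

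\textbf{Step 2 (middle link, the main obstacle).} We must show $f(y_\bullet)\rightarrow f(y_0)$, i.e.\ continuity of $f$ at $y_0$. We cannot quote this from a later theorem, since it is essentially what Theorem \ref{ContUlmThm} will need. We will prove it by a marginal-convergence argument, the net analogue of the ``$\varepsilon/3$'' argument. Apply Lemma \ref{BcnvLimit} with $u_{\alpha i}:=f_\alpha(y_i)$ and $v_{\alpha i}:=f_\alpha(y_0)$.
\begin{itemize}
\item Continuity of each $f_\alpha$ gives $u_{\alpha\bullet}\rightarrow f_\alpha(y_0)$. Local symmetry upgrades this to $u_{\alpha\bullet}\sim v_{\alpha\bullet}$ for each fixed $\alpha$.
\item The uniform (in $y$) control from Step 1 lets us take the $\alpha$-marginals: $u_{\bullet i}\rightarrowtail f(y_i)$ and $v_{\bullet i}\rightarrowtail f(y_0)$.
\end{itemize}
Lemma \ref{BcnvLimit} then yields $f(y_\bullet)\rightarrowtail f(y_0)$, provided we first establish $u_{\bullet\bullet}\rightarrowtail v_{\bullet\bullet}$. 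That hypothesis is the delicate point, because it requires interchanging the two limits. We plan to obtain it by combining the pointwise approaches $u_{\alpha\bullet}\sim v_{\alpha\bullet}$ with the single uniform threshold $\alpha^{\mathcal{O}_\varepsilon}$. Concretely, choose the $\alpha$-tail via $\alpha^{\mathcal{O}_\varepsilon}$ first, and then the $i$-tail. If this direct route fails, the fallback is to use completeness of $X$: the net $f_\bullet(y_0)$ is cauchy by Corollary \ref{PtWTcchCrl}, so Corollary \ref{TailExcCrlII} lets us replace ``excludes a subnet'' by ``excludes a tail''. We would then argue by contradiction in the style of the proof of Lemma \ref{BcnvLimit}.

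\textbf{Part (ii).} The argument is the same, with $y_0$ replaced by the net $\widetilde y_\bullet$ and continuity of each $f_\alpha$ replaced by uniform continuity. Uniform continuity gives $f_\alpha(y_\bullet)\rightarrowtail f_\alpha(\widetilde y_\bullet)$ for each $\alpha$. Uniform convergence gives $f_\bullet(y_\bullet)\rightarrowtail f(y_\bullet)$ and $f(\widetilde y_\bullet)\rightarrowtail f_\bullet(\widetilde y_\bullet)$, where the second of these uses the same homogeneous-threshold argument as Step 1 together with local symmetry. The middle link $f(y_\bullet)\rightarrowtail f(\widetilde y_\bullet)$ again comes from Lemma \ref{BcnvLimit} applied to the bi-nets $f_\alpha(y_i)$ and $f_\alpha(\widetilde y_j)$. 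Transitivity (Lemma \ref{NetAppHT}) then closes the chain.
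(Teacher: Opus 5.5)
Your proposal has a genuine gap: the key step is circular. You rightly avoid quoting Theorem \ref{ContUlmThm} for the middle link, but Lemma \ref{BcnvLimit} then needs the hypothesis $u_{\bullet\bullet}\rightarrowtail v_{\bullet\bullet}$, with $u_{\alpha i}=f_\alpha(y_i)$ and $v_{\alpha i}=f_\alpha(y_0)$. Since $v$ does not depend on $i$, this is exactly $f_\bullet(y_\bullet)\rightarrowtail f_\bullet(y_0)$, which is statement (i). In (ii), applying Lemma \ref{BcnvLimit} to $f_\alpha(y_i)$ and $f_\alpha(\widetilde{y}_j)$ likewise presupposes $f_\bullet(y_\bullet)\rightarrowtail f_\bullet(\widetilde{y}_\bullet)$, which is statement (ii). So the ``delicate point'' you postpone is the entire lemma. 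Your Step 2 is literally the proof of Theorem \ref{ContUlmThm}, which takes this lemma as its input. The paper runs the logic the other way and proves the lemma directly, using no property of $f$. Since $f_\bullet\stackrel{u}{\longrightarrow}f$, $f_\bullet$ is cauchy, so $f_\bullet\rightarrowtail f_\bullet$ in $(X^Y,\tau_{uc},\mathcal{B})$. Testing this on the sets $[y_0,O_\varepsilon(f_\alpha(y_0))]_p$ (using $\tau_p\subset\tau_{uc}$) shows that for all large $\alpha$, $O_\varepsilon(f_\alpha(y_0))$ contains a tail of $f_\bullet(y_0)$. Continuity of the individual $f_\alpha$ is then invoked to pass to a tail of $f_\bullet(y_\bullet)$. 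Part (ii) is the same, with the sets $[\widetilde{y}_{\widetilde{\beta}},O_\varepsilon(f_\alpha(\widetilde{y}_{\widetilde{\beta}}))]_p$ and uniformity of the $f_\alpha$.

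Several other links also fail as argued.
\begin{itemize}
\item In Step 1, uniform convergence gives $f_\alpha(y_i)\in O_\varepsilon(f(y_i))$ for $\alpha\geq\alpha^{\mathcal{O}_\varepsilon}$ only at the \emph{same} index $i$. A tail of the bi-net also contains $f_{\alpha'}(y_{i'})$ for all large $i'$, and these are controlled by neighborhoods of $f(y_{i'})$, not of $f(y_i)$. You only get the column statements $u_{\bullet i}\rightarrowtail f(y_i)$. Your argument uses neither continuity of the $f_\alpha$ nor $y_\bullet\rightarrow y_0$. So with $f_\alpha\equiv f$ it would show that $f(y_\bullet)$ approaches itself for every map $f$ and every net $y_\bullet$. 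That fails in $X=\mathbb{R}$ when $f(y_\bullet)$ alternates between $0$ and $1$.
\item In (ii), the last link $f(\widetilde{y}_\bullet)\rightarrowtail f_\bullet(\widetilde{y}_\bullet)$ does not follow from local symmetry, which only reverses approaches to a \emph{constant} net. Reversing an approach between nonconstant nets needs (cauchy) symmetry, which an $lsb$-space need not have.
\item Your fallback for the key step uses only cauchyness of $f_\bullet(y_0)$, i.e.\ information at the single point $y_0$, and that cannot suffice. Take $f_n(x)=x^n$ on $[0,1]$ with $y_0=1$ and $y_i=1-1/i$. Every $f_n$ is continuous and $f_\bullet(y_0)\equiv 1$, yet every tail of the bi-net $\{f_n(y_i)\}_{(n,i)}$ contains points arbitrarily close to $0$.
\end{itemize}
A correct argument must use uniform convergence at the points $y_i$ to make the $i$-threshold independent of $\alpha'$. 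Note that the paper's one-line passage from a tail of $f_\bullet(y_0)$ to a tail of $f_\bullet(y_\bullet)$ appeals only to continuity of each $f_\alpha$, so it faces exactly this issue.
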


\begin{proof}
In both (i) and (ii), since $f_\bullet\stackrel{u}{\longrightarrow}f$, $f_\bullet$ is cauchy (hence $f_\bullet\rightarrowtail f_\bullet$) in $(X^Y,\tau_{uc})$. Since $\tau_p\subset\tau_{uc}$:

(i) For any $\mathcal{O}_\varepsilon(f_\bullet(y_0))=\{O_\varepsilon(f_\alpha(y_0))\ni f_\alpha(y_0)\}\subset\mathcal{B}_{\varepsilon,X}$ (or equivalently, for $\mathcal{O}_\varepsilon(f_\bullet):=\{O_\varepsilon(f_\alpha)=[y_0,O_\varepsilon(f_\alpha(y_0))]_p:\forall\alpha\}\subset\mathcal{B}_\varepsilon$), there exists $\alpha^{\mathcal{O}_\varepsilon(f_\bullet)}$ such that $\forall\alpha\geq \alpha^{\mathcal{O}_\varepsilon(f_\bullet)}$, $[y_0,O_\varepsilon(f_\alpha(y_0))]_p$ contains a tail of $f_\bullet$ (i.e., $O_\varepsilon(f_\alpha(y_0))$ contains a tail of $f_\bullet(y_0)$, hence because each $f_\alpha(y_\bullet)\rightarrow f_\alpha(y_0)$, also contains a tail of $f_\bullet(y_\bullet)$). Therefore $f_\bullet(y_\bullet)\rightarrowtail f_\bullet(y_0)$.

(ii) For any $\mathcal{O}_\varepsilon(f_\bullet(\widetilde{y}_\bullet))=\{O_\varepsilon(f_\alpha(\widetilde{y}_{\widetilde{\beta}}))\ni f_\alpha(\widetilde{y}_{\widetilde{\beta}})\}\subset\mathcal{B}_{\varepsilon,X}$ (or equivalently, for $\mathcal{O}_\varepsilon(f_\bullet):=\{\mathcal{O}_\varepsilon(f_\bullet(\widetilde{y}_{\widetilde{\beta}})):\forall\widetilde{\beta}\}=\{[\widetilde{y}_{\widetilde{\beta}},O_\varepsilon(f_\alpha(\widetilde{y}_{\widetilde{\beta}}))]_p:\forall\alpha,\widetilde{y}_{\widetilde{\beta}}\}\subset\mathcal{B}_\varepsilon$), there exist $\{\alpha_{\widetilde{\beta}}^{\mathcal{O}_\varepsilon(f_\bullet)}:\forall \widetilde{\beta}\}$ such that $\forall\alpha\geq \alpha_{\widetilde{\beta}}^{\mathcal{O}_\varepsilon(f_\bullet)}$, $[\widetilde{y}_{\widetilde{\beta}},O_\varepsilon(f_\alpha(\widetilde{y}_{\widetilde{\beta}}))]_p$ contains a tail of $f_\bullet$ (i.e., for each $\widetilde{\beta}$, $O_\varepsilon(f_\alpha(\widetilde{y}_{\widetilde{\beta}}))$ contains a tail of $f_\bullet(\widetilde{y}_{\widetilde{\beta}})$, hence because each $f_\alpha(y_\bullet)\rightarrowtail f_\alpha(\widetilde{y}_\bullet)$, there exists $\widetilde{\beta}'$ (hence $\alpha^{\mathcal{O}_\varepsilon(f_\bullet)}:=\alpha_{\widetilde{\beta}'}^{\mathcal{O}_\varepsilon(f_\bullet)}$) such that $\forall\widetilde{\beta}\geq\widetilde{\beta}'$ (and $\alpha\geq \alpha^{\mathcal{O}_\varepsilon(f_\bullet)}$), $O_\varepsilon(f_\alpha(\widetilde{y}_{\widetilde{\beta}}))$ also contains a tail of $f_\bullet(y_\bullet)$). Therefore $f_\bullet(y_\bullet)\rightarrowtail f_\bullet(\widetilde{y}_\bullet)$.
\end{proof}

\begin{thm}[{Continuous limit theorem, Uniform limit theorem}]\label{ContUlmThm}
Let $Y=(Y,\tau_Y,\mathcal{B}_Y)$ be a $lsb$-space and $X=(X,\tau_X,\mathcal{B}_X)$ a complete $lsb$-space. The following hold:
\begin{enumerate}[leftmargin=0.7cm]
\item[(i)] The space of continuous maps $C(Y,X)\subset(X^Y,\tau_{uc},\mathcal{B})$ is complete.
\item[(ii)] The space of uniform maps $UC(Y,X)\subset (X^Y,\tau_{uc},\mathcal{B})$) is complete.
\end{enumerate}

\end{thm}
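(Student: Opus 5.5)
The plan is to reduce both parts to Theorem \ref{UnfCmplThm}. By that theorem, $(X^Y,\tau_{uc},\mathcal{B})$ is complete. By Remark \ref{BasicResRmk}(5), a closed subspace of a complete $lsb$-space is complete. So it suffices to show that $C(Y,X)$ and $UC(Y,X)$ are closed in $(X^Y,\tau_{uc},\mathcal{B})$. By Remark \ref{CmpCntRmk}(i), closedness amounts to this: whenever a net $f_\bullet$ in $C(Y,X)$ (resp. $UC(Y,X)$) converges in $\tau_{uc}$ to some $f\in X^Y$ (i.e. $f_\bullet\stackrel{u}{\longrightarrow}f$), the limit $f$ is again continuous (resp. uniform).

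For (i), I fix a net $y_\bullet\rightarrow y_0$ in $Y$ and aim to show $f(y_\bullet)\rightarrow f(y_0)$. The intended route is a ``marginal convergence'' argument via Lemma \ref{BcnvLimit}.
\begin{itemize}
\item Consider the bi-nets $u_{\alpha\beta}:=f_\alpha(y_\beta)$ and $v_{\alpha\beta}:=f_\alpha(y_0)$, the latter constant in $\beta$.
\item Lemma \ref{ContUlmLmm}(i) gives $f_\bullet(y_\bullet)\rightarrowtail f_\bullet(y_0)$, which I read as the approach $u_{\bullet\bullet}\rightarrowtail v_{\bullet\bullet}$.
\item Uniform convergence gives the marginals: for each fixed $\beta$, $f_\bullet(y_\beta)\rightarrow f(y_\beta)$, and $f_\bullet(y_0)\rightarrow f(y_0)$. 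Both follow from Corollary \ref{PtWTcchCrl}, since a $\tau_{uc}$-convergent net is pointwise convergent.
\item Lemma \ref{BcnvLimit} then yields $f(y_\bullet)\rightarrowtail \{f(y_0)\}$, i.e. $f(y_\bullet)\rightarrow f(y_0)$.
\end{itemize}
So $f$ is continuous and $C(Y,X)$ is closed.

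For (ii) the argument is the same with $y_0$ replaced by a second net. Take $y_\bullet\rightarrowtail\widetilde y_\bullet$ in $Y$. Lemma \ref{ContUlmLmm}(ii) gives $f_\bullet(y_\bullet)\rightarrowtail f_\bullet(\widetilde y_\bullet)$ as bi-nets. Pointwise convergence gives the marginals $f_\bullet(y_\beta)\rightarrow f(y_\beta)$ and $f_\bullet(\widetilde y_{\widetilde\beta})\rightarrow f(\widetilde y_{\widetilde\beta})$. Lemma \ref{BcnvLimit} then gives $f(y_\bullet)\rightarrowtail f(\widetilde y_\bullet)$, so $f\in UC(Y,X)$ and $UC(Y,X)$ is closed.

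The main obstacle is the bookkeeping of index roles in Lemma \ref{BcnvLimit}. The marginal limits must be taken over the $\alpha$-index (the function index), with $\beta$ held fixed. Lemma \ref{ContUlmLmm} must therefore be read as an approach of bi-nets indexed by $(\alpha,\beta)$, directed by the product order, rather than as an approach along a diagonal. If that reading is unclear, I would fall back on a direct argument. Fix a homogeneous selection of $\mathcal{B}_{X,\varepsilon}$-neighborhoods of the points $f(\widetilde y_{\widetilde\beta})$. Use uniform convergence to choose a single $\alpha$ that works simultaneously for all points. Then transfer the tail containment through that one $f_\alpha$, using its continuity (resp. uniformity). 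The simultaneous control over all $y$ is exactly what $\stackrel{u}{\longrightarrow}$ provides.
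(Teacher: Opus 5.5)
Your proposal is correct and follows the paper's proof essentially step for step. You reduce to closedness of $C(Y,X)$ and $UC(Y,X)$ in the complete space $(X^Y,\tau_{uc},\mathcal{B})$ via Theorem \ref{UnfCmplThm} and Remark \ref{BasicResRmk}, then get continuity (resp.\ uniformity) of the limit $f$ from Lemma \ref{ContUlmLmm}, pointwise convergence via Corollary \ref{PtWTcchCrl}, and the marginal-convergence Lemma \ref{BcnvLimit}, with the same index roles as the paper's diagrams (marginals over the function index $\alpha$, $\beta$ fixed). So the fallback is unnecessary; it would also need a composition/triangle-type property that general base spaces do not provide.
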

\begin{proof}
By Theorem \ref{UnfCmplThm}, $(X^Y,\tau_{uc},\mathcal{B})$ is a complete $lsb$-space. So, it suffices to prove that ~$C(Y,X)$ and $UC(Y,X)$ are closed subsets of $(X^Y,\tau_{uc},\mathcal{B})$.

(i) Let $f_\bullet \in C(Y,X)$ and $f\in X^Y$ such that $f_\bullet\stackrel{u}{\longrightarrow}f$. We need to show $f\in C(Y,X)$. By Corollary \ref{PtWTcchCrl}, $f_\bullet(y)\rightarrow f(y)$ for all $y\in Y$. Let $y_\bullet\rightarrow y_0$ in $Y$. Then by Lemmas \ref{ContUlmLmm}(i) and \ref{BcnvLimit}, $f(y_\bullet)\rightarrow f(y_0)$, since

\begin{center}
\begin{tikzcd}
f_\bullet(y_\bullet)\ar[d,tail] & \forall\beta,~~~f_\bullet(y_\beta)\ar[rr,tail]  && f(y_\beta) & f(y_\bullet) \ar[d,dashed,tail]\\
f_\bullet(y_0)            & f_\bullet(y_0)   \ar[rr,tail]  && f(y_0) & f(y_0)
\end{tikzcd}
\end{center}

(ii) Let $f_\bullet\in UC(Y,X)$ and $f\in X^Y$ such that $f_\bullet\stackrel{u}{\longrightarrow}f$. We need to show $f\in UC(Y,X)$. By Corollary \ref{PtWTcchCrl}, $f_\bullet(y)\rightarrow f(y)$ for all $y\in Y$. Let $y_\bullet\rightarrowtail\widetilde{y}_\bullet$ in $Y$. Then by Lemmas \ref{ContUlmLmm}(ii) and \ref{BcnvLimit}, $f(y_\bullet)\rightarrowtail f(\widetilde{y}_\bullet)$, since

\begin{center}
\begin{tikzcd}
f_\bullet(y_\bullet)\ar[d,tail] & \forall\beta,~~~f_\bullet(y_\beta)\ar[rr,tail]  && f(y_\beta) & f(y_\bullet) \ar[d,dashed,tail]\\
f_\bullet(\widetilde{y}_\bullet)            & \forall\widetilde{\beta},~~~f_\bullet(\widetilde{y}_{\widetilde{\beta}})   \ar[rr,tail]  && f(\widetilde{y}_{\widetilde{\beta}}) & f(\widetilde{y}_\bullet)
\end{tikzcd}
\end{center}
\end{proof}

\begin{rmk}
The results Lemma \ref{FxnNetApp1}, Corollary \ref{PtWTcchCrl}, Theorem \ref{UnfCmplThm},  Lemma \ref{ContUlmLmm} Theorem \ref{ContUlmThm} remain true if $\tau_{uc}$ is replaced with $\tau_{uc|_{\mathcal{Z}}}$ (from Definition \ref{TpUnfCnvDfn}) or any other topology $\tau\supset\tau_p$.
\end{rmk}

%%#######################################################################################
\section{\textnormal{\bf Integration in topological modules}}\label{IntModSec}
%%#######################################################################################
\begin{dfn}[{Topological Group, Ring, Module}]
A \textbf{topological group} is a group $G=(G,\cdot,()^{-1},e)$ that is a topological space with continuous group-multiplication and inversion
\[
\cdot:G\times G\rightarrow G,~(g,h)\mapsto gh~~{~}~~\textrm{and}~~{~}~~()^{-1}:G\rightarrow G,~g\mapsto g^{-1}.
\]
A \textbf{topological ring} $R=(R,+,\cdot,0,1)$ is a (topological abelian group endoured with a continuous multiplication operation, making it precisely/equivalently a) ring that is a topological space with continuous addition and multiplication
\[
~~~~+:R\rightarrow R,~(r,s)\mapsto r+s~~{~}~~\textrm{and}~~{~}~~\cdot:R\times R\rightarrow R,~(r,s)\mapsto rs.
\]
Let $R$ be a topological ring. A \textbf{topological $R$-module} $M={}_RM=(M,+,\cdot_R,0)$ is an $R$-module that is a topological space with continuous addition and scalar multiplication
\[
+:M\rightarrow M,~(m,n)\mapsto m+n~~{~}~~\textrm{and}~~{~}~~\cdot_R:R\times M\rightarrow M,~(r,m)\mapsto rm.
\]
\end{dfn}
In concrete applications, we often take ~$R=\mathbb{F}\in\{\mathbb{Z}_p,\mathbb{R},\mathbb{C}\}$, where $\mathbb{Z}_p:=\mathbb{Z}/p\mathbb{Z}$ for a prime number $p$, which can also be accounted for by considering product rings of the form $R:=\mathbb{Z}_p{}^m\times\mathbb{R}^n\times\mathbb{C}^k$, for $m,n,k\in\mathbb{N}$.

\begin{dfn}[{Measure, Integral}]\label{GenMeasInt}
Let $R$ be a topological ring, $U$ a set, and $X$ a \textbf{$sb$-topological $R$-module} (i.e., a topological $R$-module as a $lsb$-space). An \textbf{$X$-valued measure} on $U$ is a map
\[
\mu:\Sigma\subset\mathcal{P}(U)\rightarrow X,~\Delta\mapsto \mu(\Delta),~~\textrm{~}~~\textrm{$\Sigma$ a $\sigma$-algebra} ~~(\textrm{\bf footnote}\footnotemark),
\]
\footnotetext{A subset of $\mathcal{P}(U)$ is a \textbf{$\sigma$-algebra} on $U$ if it contains $U$ and is closed under both complement and countable union.}such that (i) $\mu(\emptyset)=0$ and (ii) for any countable disjoint family $\{\Delta_i\}_{i=1}^\infty\subset\Sigma$, we have $\mu(\bigcup_i\Delta_i)=\sum_{i=1}^\infty\mu(\Delta_i):=\lim_{\textrm{finite $S\subset\mathbb{N}$}}\sum_{i\in S}\mu(\Delta_i)$.

Consider the set of $\Sigma$-measurable \emph{indexed} finite partitions of $U$,
\[
\textstyle \mathbb{M}_{\textrm{FP}}(U, \Sigma) :=\{(F,\mathcal{P}):\mathcal{P}\subset \mathcal{P}(\Sigma)~\textrm{a finite partition of $U$},~F\in\prod\mathcal{P}~\textrm{an indexing by members of $\mathcal{P}$}\},
\]
as the poset ({\bf footnote}\footnote{
This is a directed set by construction, where an upper bound of $(F,\mathcal{P})$ and $(G,\mathcal{Q})$ is $(F\cup G,\mathcal{P}\cup\mathcal{Q})$.
}) with ordering ``$(F,\mathcal{P}) \leq  (F',\mathcal{P}')$ if $(\mathcal{P}',F')$ is a \textbf{refinement} of $(F,\mathcal{P})$ in that $F\subset F'$ and for each $P'\in\mathcal{P}'$, there exists $P\in \mathcal{P}$ such that $P'\subset P$''. In $(F,\mathcal{P})$, if $t\in F$, let $P_t$ denote the lone element of $\mathcal{P}$ satisfying $F\cap P_t=\{t\}$ (so that $\mathcal{P}=\{P_t\}_{t\in F}\equiv P_F$).

Given a map $f:U\rightarrow R$, associate the net
\[
\textstyle S_f=S_f(U,\Sigma,\mu):\mathbb{M}_{\textrm{FP}}(U, \Sigma)\rightarrow X,~(F,\mathcal{P})\mapsto\sum_{t\in F}f(t)\mu( P_t)\in\textrm{Span}_R(\mu(\Sigma)).~~(\textrm{\bf footnote}\footnotemark).
\]
\footnotetext{For example, if $f:(U,\Sigma)\rightarrow(R,\Sigma_R)$ is measurable (i.e., $f^{-1}(\Sigma_R)\subset\Sigma$), then we may set $P_F:=f^{-1}(Q_F)=\{f^{-1}(Q_t):t\in F\}$ for a $\Sigma_R$-measurable partition $Q_F$ of $f(U)$. \textbf{Alternatively}, consider the set $I:=\{(p,\mathcal{P}):\mathcal{P}\subset\Sigma~\textrm{a finite set},~p:\Sigma\rightarrow U,~\Delta\mapsto p(\Delta)\in\Delta~\textrm{a selection map}\}$ as the directed set with ordering ``$(p,\mathcal{P})\leq (p',\mathcal{P}')$ if $\mathcal{P}\subset \mathcal{P}'$''. Given a function $f:U\rightarrow R$, consider the net
\[
\textstyle S_f:I\rightarrow X,~(p,\mathcal{P})\mapsto\sum_{\Delta\in \mathcal{P}}f(p(\Delta))\mu(\Delta).
\]}An \textbf{$X$-valued integral} of $f$ with respect to $\mu$ is any limit of $S_f$ (denoted by $\int_Ufd\mu$ or $\int_Uf(t)d\mu_t$ in $\overline{\textrm{Span}_R(\mu(\Sigma))}$).
\end{dfn}

\begin{rmk}
If $X$ is a complete Hausdorff $lsb$-space, then the above integral (whenever it exists) is unique, and moreover, existence of the integral is equivalent to cauchyness of $S_f$. This gives an alternative method of integration based on our notion of cauchyness in Definition \ref{ApprchDfn}.
\end{rmk}

%%#######################################################################################
\section{\textnormal{\bf Conclusion and questions}}\label{ConclSec}
%%#######################################################################################
We have seen that an induction-free concept of a cauchy net (hence of completeness) is possible in a base space (a topological space with a graded base). Our discussion is based on a notion of approach between nets and reproduces many classical results on completeness for uniform spaces and associated product and function spaces. Our method of completeness provides a means of integration in topological $R$-modules.

The following are more research questions (in addition to Questions \ref{AprDfnQsn}, \ref{BodyQuest2}, \ref{BodyQuest3}).

\begin{question}
Which subset $lsb$-hyperspaces of a complete $lsb$-space are complete?
\end{question}

\begin{question}
We know that every uniform space is a $csb$-space (by Proposition \ref{UnifCsbPrp}). (i) Is every $csb$-space uniformizable? (ii) Is every $lsb$-space a $u$-space?
\end{question}

\begin{question}
To what extent can our main results for $lsb$-spaces (Theorems \ref{CompCsbThm}, \ref{CpctSpThm}, \ref{BaireThm}, \ref{tUConThm}, \ref{BSpCmplThm}, \ref{EBPCSthm}, \ref{PtWsCompThm}, \ref{UnfCmplThm}, \ref{ContUlmThm}) be extended to more general base spaces?
\end{question}

\begin{question}
Let us say that a $lsb$-space is \textbf{uniformly connected} if every two points in the space are linked by a uniform path. Do the main results of \cite{akofor2024} still hold if ``Lipschitz paths'' are replaced with ``uniform paths''? This is relevant to \cite[Question 5.1]{akofor2024}, which was partly addressed in \cite[Question 5]{akofor2025} for the case of finite-subset hyperspaces of spaces more general than metric spaces.
\end{question}

\begin{question}[Completeness of (topological) categories]
Can the ``net-approach'' idea be directly applied to ``convergence systems'' in category theory \cite[Ch. 9]{akofor2021}, by replacing the ``system to object convergence structure'' with a ``system to system approach structure''? In abstract settings, one might need to prove the existence of appropriate ``approach structures'' using Zorn's lemma (beginning with the convergence itself as an approach structure on constant/trivial systems as the base case for the induction, and then extending to nonconstrant/nontrivial systems).
\end{question}

\vspace{0.2cm}
\hrule
%%%%%%%%%%%%%%%%%%%%%%%%%%%%%%%%%%%%%%%%%%%%%%%%%%%%%%%%%%%%%%%%%%%%%%%%%%%
%\endgroup

\begin{bibdiv}
\begin{biblist}
\bib{AarAnd1972}{article}{  %%% [1]
   author={Aarnes, J. F.,},
   author={Andenes, P. R.,},
%   author={},
   title={On nets and filters},
   note={},
   journal={Math. Scand.},
   volume={31},
   date={1972},
%   number={},
   pages={285-292},
   issn={},
%%%   review={\mathbb{M}R{3563748}},
   doi={},
}

\bib{akofor2025}{article}{  %%% [1]
   author={Akofor, E.},
%   author={Jafari, S.},
%   author={},
   title={Modulo arithmetic of function spaces: Subset hyperspaces as quotients of function spaces},
   note={},
   journal={Houston Journal of Mathematics},
   volume={},
   date={2025},
   number={},
%   pages={89-100},
   issn={},
%%%   review={\mathbb{M}R{3563748}},
   doi={},
}

\bib{akofor2024}{article}{
   author={Akofor, E.},
%   author={},
   title={On quasiconvexity of precompact-subset spaces},
%   note={Submitted to ``Ann. Fenn. Math.''},
   journal={J. Anal.},
%   volume={},
   date={2024},
%   number={},
%   pages={},
%%%   issn={},
%%%   review={},
  doi={\url{10.1007/s41478-024-00827-z}},
}

\bib{akofor25svm}{article}{  %%% [1]
   author={Akofor, E.},
%   author={Jafari, S.},
%   author={},
   title={Set-valued metrics and generaized Hausdorff distances},
   note={},
%   journal={arXiv:2303.10815 [math.GN]},
   volume={},
   date={2025},
   number={},
%   pages={89-100},
   issn={},
%%%   review={\mathbb{M}R{3563748}},
   doi={\url{https://doi.org/10.48550/arXiv.2503.10815}},
}

\bib{akofor2021}{article}{  %%% [1]
   author={Akofor, E.},
%   author={Jafari, S.},
%   author={},
   title={Basic Set Theory and Algebra: Hints on Representation, Topology, Geometry, Analysis},
   note={},
%   journal={arXiv:2101.02031 [math.CT]},
   volume={},
   date={2021},
   number={},
%   pages={89-100},
   issn={},
%%%   review={\mathbb{M}R{}},
   doi={\url{https://doi.org/10.48550/arXiv.2101.02031}},
}

\begin{comment}
\bib{Bal2000}{book}{
   author={Balachandran, V. K.,},
   title={Topological algebras},
   series={Mathematics Studies},
   volume={185},
   note={},
   publisher={Elsevier},
   date={2000},
   pages={},
   isbn={},
%   review={\mathbb{M}R{1670250}},
}
\end{comment}

\bib{Bartle1955}{article}{  %%% [1]
   author={Bartle, R. G.,},
%   author={},
   title={Nets and filters in topology},
   note={},
   journal={Amer. Math. Month.},
   volume={62},
   date={1955},
   number={8},
   pages={551-557},
   issn={},
%%%   review={\mathbb{M}R{3563748}},
%   doi={https://doi.org/10.1080/00029890.1955.11988688},
}

\bib{BentEtal1990}{article}{  %%% [1]
   author={Bentley, H. L.},
   author={Herrlich, H.},
   author={Lowen-Colebunders, E.},
   title={Convergence},
   note={},
   journal={Journ. Pure App. Alg.},
   volume={68},
   date={1990},
%   number={},
   pages={27-45},
   issn={},
%%%   review={\mathbb{M}R{3563748}},
   doi={},
}

\bib{BbkGT1966}{book}{
   author={Bourbaki, N.,},
   title={General Topology I $\&$ II},
   series={Elements of Mathematics},
   volume={},
   note={},
   publisher={Hermann},
   date={1966},
   pages={},
   isbn={},
%   review={\mathbb{M}R{1670250}},
}

\bib{Choquet47-48}{article}{  %%% [1]
   author={Choquet, G.},
%   author={},
   title={Convergences},
   note={},
   journal={Annales de l'universit\'e de Grenoble},
   volume={23},
   date={1947},
%   number={},
   pages={57-112},
   issn={},
%%%   review={\mathbb{M}R{3563748}},
   doi={},
}

\bib{Dolecki2024}{book}{
   author={Dolecki, S.,},
%%   author={Mynard, F.,},
   title={A Royal Road to Topology: Convergence of Filters},
   series={},
   volume={},
   note={},
   publisher={World Scientific Publishing},
   date={2024},
   pages={},
   isbn={},
%   review={\mathbb{M}R{1670250}},
}

\bib{DolMyn2016}{book}{
   author={Dolecki, S.,},
   author={Mynard, F.,},
   title={Convergence Foundations of Topology},
   series={},
   volume={},
   note={},
   publisher={World Scientific Publishing},
   date={2016},
   pages={},
   isbn={},
%   review={\mathbb{M}R{1670250}},
}

\bib{engelking1989}{book}{
   author={Engelking, R.,},
   title={General Topology},
   series={Sigma series in pure mathematics; Vol. 6},
   volume={},
   note={},
   publisher={Heldermann Verlag Berlin},
   date={1989},
   pages={},
   isbn={3-88538-Q06-4},
%   review={\mathbb{M}R{1670250}},
}

\bib{Freiwald2014}{book}{
   author={Freiwald, R. C.,},
   title={An Introduction to Set Theory and Topology},
   series={},
   volume={},
   note={},
   publisher={http://doi.dx.org/10.7936/K7D798QH},
   date={2014},
   pages={},
   isbn={978-1-941823-10-1},
%   review={\mathbb{M}R{1670250}},
   doi={10.7936/K7D798QH},
}

\bib{Isbell1964}{book}{
   author={Isbell, J. R.,},
   title={Uniform spaces},
   series={Mathematical Surveys 12},
   volume={},
   note={},
   publisher={Amer. Math. Soc.},
   date={1964},
   pages={},
   isbn={},
%   review={\mathbb{M}R{1670250}},
}

\bib{James1987}{book}{
   author={James, I. M.,},
   title={Topological and Uniform Spaces},
   series={Undergraduate Texts in Mathematics},
   volume={},
   note={},
   publisher={Springer-Verlag New York Inc.},
   date={1987},
   pages={},
   isbn={},
%   review={\mathbb{M}R{1670250}},
}

\bib{keller1968}{article}{  %%% [1]
   author={Keller, H. H.,},
%   author={},
   title={Die Limes-Uniformisierbarkeit der Limesr\"aume},
   note={},
   journal={Math. Ann.},
   volume={176},
   date={1968},
%   number={},
   pages={334-341},
   issn={},
%%%   review={},
   doi={},
}

\bib{kelley1955}{book}{
   author={Kelley, J. K.,},
   title={General Topology},
   series={University Series in Higher Mathematics},
   volume={},
   note={},
   publisher={D. Van Nostrand Company},
   date={1955},
   pages={},
   isbn={},
%   review={\mathbb{M}R{1670250}},
}

\bib{Lowen1989}{book}{
   author={Lowen-Colebunders, E.},
%   author={Warrack, B. D.},
   title={Function Classes of Cauchy Continuous Maps},
   series={Pure and Applied Mathematics},
   volume={},
   note={},
   publisher={Marcel Dekker},
   date={1989},
   pages={},
   isbn={},
%   review={\mathbb{M}R{1670250}},
}

\bib{Munkres2000}{book}{
   author={Munkres, J. R.},
   title={Topology, 2nd Edition},
   series={},
   volume={},
   note={},
   publisher={Prentice Hall},
   date={2000},
   pages={},
   isbn={},
%   review={\mathbb{M}R{1670250}},
}

\bib{NmpWrrk1970}{book}{
   author={Naimpally, S. A.,},
   author={Warrack, B. D.,},
   title={Proximity Spaces},
   series={Cambridge Tracts in Mathematics and Mathematical Physics, No. 59},
   volume={},
   note={},
   publisher={Cambridge University Press},
   date={1970},
   pages={},
   isbn={},
%   review={\mathbb{M}R{1670250}},
}

\bib{Nakano1977}{article}{  %%% [1]
   author={Nakano, K.,},
%   author={},
   title={A notion of completeness of topological structures},
   note={Paper read at the Second Symposium on Categorical Topology at the University of Cape Town 9-13 August 1976},
   journal={Quaestiones Mathematicae},
   volume={2},
   date={1977},
%   number={},
   pages={235-243},
   issn={},
%%%   review={\mathbb{M}R{3563748}},
   doi={},
}

\bib{neumann1935}{article}{  %%% [1]
   author={Neumann, J. V.,},
%   author={},
   title={On complete topological spaces},
   note={},
   journal={Trans. Amer. Math. Soc.},
   volume={37},
   date={Jan., 1935},
   number={1},
   pages={1-20},
   issn={},
%%%   review={\mathbb{M}R{3563748}},
   doi={},
}

\bib{ObrienEtal2021}{article}{  %%% [1]
   author={O'Brien, M.,},
   author={Troitsky, V. G.,},
   author={Van Der Walt, J. H. ,},
%   author={},
   title={Net convergence structures with applications to vector lattices},
   note={},
   journal={arXiv:2103.01339v1 [math.FA]},
   volume={},
   date={2021},
%   number={},
   pages={},
   issn={},
%%%   review={\mathbb{M}R{3563748}},
   doi={},
}

\bib{OBrien2021}{article}{  %%% [1]
   author={O'Brien, M.,},
%   author={},
   title={A theory of net convergence with applications to vector lattices},
   note={},
   journal={Ph.D. Thesis, University of Alberta},
   volume={},
   date={2021},
%   number={},
   pages={},
   issn={},
%%%   review={\mathbb{M}R{3563748}},
   doi={},
}

\bib{Schect1997}{book}{
   author={Schechter, E.},
%   author={Warrack, B. D.},
   title={Handbook of Analysis and its Foundations},
   series={},
   volume={},
   note={},
   publisher={Academic Press},
   date={1997},
   pages={},
   isbn={},
%   review={\mathbb{M}R{1670250}},
}

%\end{comment}

\end{biblist}
\end{bibdiv}
\end{document}